\tikzset{
>=stealth',
help lines/.style={dashed, thick},
axis/.style={<->},
important line/.style={thick},
connection/.style={thick, dotted},
}
\newcommand{\nc}{\newcommand}
\nc{\rnc}{\renewcommand}
\nc{\bb}[1]{\mathbb{#1}}
\nc{\bbA}{\bb{A}}\nc{\bbB}{\bb{B}}\nc{\bbC}{\bb{C}}\nc{\bbD}{\bb{D}}
\nc{\bbE}{\bb{E}}\nc{\bbF}{\bb{F}}\nc{\bbG}{\bb{G}}\nc{\bbH}{\bb{H}}
\nc{\bbI}{\bb{I}}\nc{\bbJ}{\bb{J}}\nc{\bbK}{\bb{K}}\nc{\bbL}{\bb{L}}
\nc{\bbM}{\bb{M}}\nc{\bbN}{\bb{N}}\nc{\bbO}{\bb{O}}\nc{\bbP}{\bb{P}}
\nc{\bbQ}{\bb{Q}}\nc{\bbR}{\bb{R}}\nc{\bbS}{\bb{S}}\nc{\bbT}{\bb{T}}
\nc{\bbU}{\bb{U}}\nc{\bbV}{\bb{V}}\nc{\bbW}{\bb{W}}\nc{\bbX}{\bb{X}}
\nc{\bbY}{\bb{Y}}\nc{\bbZ}{\bb{Z}}
\nc{\mbf}[1]{{\mathbf #1}}
\nc{\bfA}{\mbf{A}}\nc{\bfB}{\mbf{B}}\nc{\bfC}{\mbf{C}}\nc{\bfD}{\mbf{D}}
\nc{\bfE}{\mbf{E}}\nc{\bfF}{\mbf{F}}\nc{\bfG}{\mbf{G}}\nc{\bfH}{\mbf{H}}
\nc{\bfI}{\mbf{I}}\nc{\bfJ}{\mbf{J}}\nc{\bfK}{\mbf{K}}\nc{\bfL}{\mbf{L}}
\nc{\bfM}{\mbf{M}}\nc{\bfN}{\mbf{N}}\nc{\bfO}{\mbf{O}}\nc{\bfP}{\mbf{P}}
\nc{\bfQ}{\mbf{Q}}\nc{\bfR}{\mbf{R}}\nc{\bfS}{\mbf{S}}\nc{\bfT}{\mbf{T}}
\nc{\bfU}{\mbf{U}}\nc{\bfV}{\mbf{V}}\nc{\bfW}{\mbf{W}}\nc{\bfX}{\mbf{X}}
\nc{\bfY}{\mbf{Y}}\nc{\bfZ}{\mbf{Z}}
\nc{\bfa}{\mbf{a}}\nc{\bfb}{\mbf{b}}\nc{\bfc}{\mbf{c}}\nc{\bfd}{\mbf{d}}
\nc{\bfe}{\mbf{e}}\nc{\bff}{\mbf{f}}\nc{\bfg}{\mbf{g}}\nc{\bfh}{\mbf{h}}
\nc{\bfi}{\mbf{i}}\nc{\bfj}{\mbf{j}}\nc{\bfk}{\mbf{k}}\nc{\bfl}{\mbf{l}}
\nc{\bfm}{\mbf{m}}\nc{\bfn}{\mbf{n}}\nc{\bfo}{\mbf{o}}\nc{\bfp}{\mbf{p}}
\nc{\bfq}{\mbf{q}}\nc{\bfr}{\mbf{r}}\nc{\bfs}{\mbf{s}}\nc{\bft}{\mbf{t}}
\nc{\bfu}{\mbf{u}}\nc{\bfv}{\mbf{v}}\nc{\bfw}{\mbf{w}}\nc{\bfx}{\mbf{x}}
\nc{\bfy}{\mbf{y}}\nc{\bfz}{\mbf{z}}
\newcommand{\G}{\mathbb{G}}
\nc{\mcal}[1]{{\mathcal #1}}
\nc{\calA}{\mcal{A}}\nc{\calB}{\mcal{B}}\nc{\calC}{\mcal{C}}\nc{\calD}{\mcal{D}}
\nc{\calE}{\mcal{E}} \nc{\calF}{\mcal{F}}\nc{\calG}{\mcal{G}}\nc{\calH}{\mcal{H}}
\nc{\calI}{\mcal{I}}\nc{\calJ}{\mcal{J}}\nc{\calK}{\mcal{K}}\nc{\calL}{\mcal{L}}
\nc{\calM}{\mcal{M}}\nc{\calN}{\mcal{N}}\nc{\calO}{\mcal{O}}\nc{\calP}{\mcal{P}}
\nc{\calQ}{\mcal{Q}}\nc{\calR}{\mcal{R}}\nc{\calS}{\mcal{S}}\nc{\calT}{\mcal{T}}
\nc{\calU}{\mcal{U}}\nc{\calV}{\mcal{V}}\nc{\calW}{\mcal{W}}\nc{\calX}{\mcal{X}}
\nc{\calY}{\mcal{Y}}\nc{\calZ}{\mcal{Z}}
\nc{\fA}{\frak{A}}\nc{\fB}{\frak{B}}\nc{\fC}{\frak{C}} \nc{\fD}{\frak{D}}
\nc{\fE}{\frak{E}}\nc{\fF}{\frak{F}}\nc{\fG}{\frak{G}}\nc{\fH}{\frak{H}}
\nc{\fI}{\frak{I}}\nc{\fJ}{\frak{J}}\nc{\fK}{\frak{K}}\nc{\fL}{\frak{L}}
\nc{\fM}{\frak{M}}\nc{\fN}{\frak{N}}\nc{\fO}{\frak{O}}\nc{\fP}{\frak{P}}
\nc{\fQ}{\frak{Q}}\nc{\fR}{\frak{R}}\nc{\fS}{\frak{S}}\nc{\fT}{\frak{T}}
\nc{\fU}{\frak{U}}\nc{\fV}{\frak{V}}\nc{\fW}{\frak{W}}\nc{\fX}{\frak{X}}
\nc{\fY}{\frak{Y}}\nc{\fZ}{\frak{Z}}
\nc{\fa}{\frak{a}}\nc{\fb}{\frak{b}}\nc{\fc}{\frak{c}} \nc{\fd}{\frak{d}}
\nc{\fe}{\frak{e}}\nc{\fFf}{\frak{f}}\nc{\fg}{\frak{g}}\nc{\fh}{\frak{h}}
\nc{\fri}{\frak{i}}\nc{\fj}{\frak{j}}\nc{\fk}{\frak{k}}\nc{\fl}{\frak{l}}
\nc{\fm}{\frak{m}}\nc{\fn}{\frak{n}}\nc{\fo}{\frak{o}}\nc{\fp}{\frak{p}}
\nc{\fq}{\frak{q}}\nc{\fr}{\frak{r}}\nc{\fs}{\frak{s}}\nc{\ft}{\frak{t}}
\nc{\fu}{\frak{u}}\nc{\fv}{\frak{v}}\nc{\fw}{\frak{w}}\nc{\fx}{\frak{x}}
\nc{\fy}{\frak{y}}\nc{\fz}{\frak{z}}
\newtheorem{theorem}{Theorem}[section]
\newtheorem{lemma}[theorem]{Lemma}
\newtheorem{corollary}[theorem]{Corollary}
\newtheorem{prop}[theorem]{Proposition}
\newtheorem{ax}{A}
\theoremstyle{definition}
\newtheorem{definition}[theorem]{Definition}
\newtheorem{example}[theorem]{Example}
\newtheorem{remark}[theorem]{Remark}
\newtheorem{assumption}[theorem]{Assumption}
\newtheorem{thm}{Theorem}
 \DeclareMathOperator{\id}{id}
\DeclareMathOperator{\ch}{CH} 
\DeclareMathOperator{\Hom}{{Hom}}
\DeclareMathOperator{\proj}{pr} 
\DeclareMathOperator{\Spec}{{Spec}} 
 \DeclareMathOperator{\End}{End}
\DeclareMathOperator{\Gm}{\bbG_m}
\DeclareMathOperator{\SL}{SL}
\DeclareMathOperator{\Laz}{{\mathbb{L}}}
\DeclareMathOperator{\MGL}{\Omega}
\DeclareMathOperator{\Gr}{Gr}
\DeclareMathOperator{\Td}{Td}
\DeclareMathOperator{\Sm}{Sm}
\DeclareMathOperator{\Ch}{Ch}
\newcommand{\lieg}{\fg}
\newcommand{\sO}{\mathcal{O}}
\DeclareMathOperator{\Spf}{Spf}
\DeclareMathOperator{\Res}{Res}
\newcommand{\inj}{\hookrightarrow}
\def\angl#1{{\langle #1\rangle}}
\newcommand{\pt}{\text{pt}}
\newcommand{\Aff}{\bbA}
\newcommand{\PP}{\bbP}
\newcommand{\ZZ}{\bbZ}
\newcommand{\QQ}{\bbQ}
\newcommand{\Z}{\bbZ}
\newcommand{\Q}{\bbQ}
\newcommand{\lbr}{[\hspace{-1.5pt}[}  
\newcommand{\rbr}{]\hspace{-1.5pt}]}  
\newcommand{\La}{\Lambda}
\newcommand{\la}{\lambda}
\newcommand{\al}{\alpha}
\newcommand{\be}{\beta}
\newcommand{\om}{\omega}
\newcommand{\de}{\delta}
\newcommand{\ga}{\gamma}
\newcommand{\Ga}{\Gamma}
\newcommand{\ka}{\kappa}
\newcommand{\diag}{\triangle}
\newcommand{\ep}{\epsilon}
\newcommand{\ttt}{\mathfrak{t}}   
\DeclareMathOperator{\aff}{{aff}}  
\newcommand{\FAL}{R\lbr\La\rbr_F} 
\newcommand{\FA}{R\lbr\Ga\oplus\La\rbr_F}  
\newcommand{\Dem}{\Delta}   
\newcommand{\HF}{\mathbf{H}_F}  
\newcommand{\IF}{\mathcal{I}_F}  
\newcommand{\DF}{\mathbf{D}_F} 
\newcommand{\orH}{A_H}  
 \gdef\Young(#1){\hbox{$\vcenter
 {\mathcode`,="8000\mathcode`|="8000
  \def,{\global\advance\cols by 1 &}%
  \def|{\cr
        \multispan{\the\cols}\hrulefill\cr
        &\global\cols=2 }%
  \offinterlineskip\everycr{}\tabskip=0pt
  \dimen0=\ht\strutbox \advance\dimen0 by \dp\strutbox
  \halign
   {\vrule height \ht\strutbox depth \dp\strutbox##
    &&\hbox to \dimen0{\hss$##$\hss}\vrule\cr
    \noalign{\hrule}&\global\cols=2 #1\crcr
    \multispan{\the\cols}\hrulefill\cr%
   }
 }$}}
\title[The formal affine Hecke algebra]
{Geometric representations of the formal affine Hecke algebra}
\date{\today}
\author[G.~Zhao]{Gufang~Zhao}
\address{Max-Planck-Institut f\"ur Mathematik,
Vivatsgasse 7,
53111 Bonn,
Germany}
\curraddr{Institut de Math\'ematiques de Jussieu, UMR 7586 du
CNRS, Batiment Sophie Germain, 75205 Paris Cedex 13, France}
\email{gufangzhao@zju.edu.cn}
\author[C.~Zhong]{Changlong~Zhong}
\address{University of Alberta, 632 CAB, Edmonton, AB T6G 2G1, Canada}
\email{zhongusc@gmail.com}
\subjclass[2010]{
Primary 20C08; 
Secondary 14M15, 
14F43,   
55N22.
}
\keywords{Oriented cohomology theory, formal group law, Springer fiber, affine Hecke algebra}
\begin{document}
\begin{abstract}
For any formal group law, there is a formal affine Hecke algebra defined by Hoffnung--Malag\'on-L\'opez--Savage--Zainoulline. Coming from this formal group law, there is also an oriented cohomology theory.
We identify the formal affine Hecke algebra with a convolution algebra coming from the oriented cohomology theory applied to the Steinberg variety. As a consequence, this algebra acts on the corresponding cohomology of the Springer fibers. This generalizes the  action of classical affine Hecke algebra on the $K$-theory of the Springer fibers constructed by Lusztig. We also give a  residue interpretation of the formal affine Hecke algebra, which generalizes the residue construction of Ginzburg--Kapranov--Vasserot when the formal group law comes from a 1-dimensional algebraic group.
\end{abstract}

\maketitle
\tableofcontents

\section{Introduction}
Affine Hecke-type algebras arise from the study of representations of Chevalley groups over $\fp$-adic fields, and their representations have been studied extensively in the past decades. In particular,  the classification of irreducible representations and the character formulas have been achieved in \cite{KL}, \cite{Gin85}, \cite{Gr94}, etc. The  study uses in an essential way a convolution construction of the affine Hecke algebra using equivariant $K$-theory of Steinberg variety in  \cite{Lus85}.

Let $G$ be a semi-simple, simply-connected linear algebraic group over an algebraically closed field $k$. Consider the adjoint action of $G$ on its Lie algebra $\lieg$. The set of nilpotent elements in $\lieg$ has a variety structure, called the nil-cone, denoted by $\calN$. The $G$-action on $\calN$ has  finitely many orbits. The variety $\calN$ is singular but admits a natural resolution of singularities, called the Springer resolution, as follows: Let $\calB$ be the complete flag variety, parametrizing the set of  Borel subalgebras of $\lieg$. Let  $\widetilde{\calN}$ be its cotangent bundle $T^*\calB$. 
Note that $T^*\calB$ can be alternatively interpreted as the variety of pairs $(\fb,x)$, where $\fb$ is a Borel subalgebra of $\lieg$, and $x$ is a nilpotent element in $\fb$. There is a natural map $\widetilde{\calN}\to \calN$, sending each pair $(\fb,x)$ to $x$, which gives the resolution. For any $x\in\calN$, the fiber of this map over $x$ is called the {\em Springer fiber}, denoted by $\calB_x$. Note that the Springer fibers are equi-dimensional, projective varieties, but in general  not smooth. For any two different points in the same $G$-orbit of $\calN$, the fibers are isomorphic. It is well-known that  representations of the Weyl group of $G$ can be constructed by looking at  Borel-Moore homology of  Springer fibers. 
This construction can be traced back to Springer, and later on was re-described and generalized by many others. In particular, Lusztig in \cite{Lus85} constructed an action of the affine Hecke algebra on equivariant $K$-theory of  Springer fibers.

More precisely, in the constructions of Springer and Lusztig, they identified respectively the group algebra of the Weyl group and the affine Hecke algebra as the top Borel-Moore homology and respectively the equivariant $K$-theory of the Steinberg variety $Z:=\widetilde \calN\times_\calN \widetilde \calN$, both endowed with  convolution products. 
The essential property used about  Borel-Moore homology and $K$-theory is that they both admit push-forwards for proper morphisms, and pull-backs for smooth morphisms. In fact, a functor from the category of smooth quasi-projective varieties to the category of commutative (graded) rings, that  admits these two properties together with  certain  compatibility conditions, is called an {\em oriented cohomology theory}.
Examples of  oriented cohomology theories include the Chow ring $\ch^*$ (see \cite{Ful}), the $K$-theory, the elliptic cohomologies, and the algebraic cobordism theory $\MGL^*$ of Levine and Morel (see \cite{LM}). 
It is proved by Levine and Morel that the algebraic cobordism $\MGL^*$ is the universal  oriented cohomology theory.
We refer  to \cite{LM} and \cite{PaninSmirnovII} for details. We will briefly recall the relevant notions and properties in Section~\ref{sec:prelim_coh}. Also in Section~\ref{sec:prelim_coh}, we will explain in details how to obtain a convolution algebra out of an oriented cohomology theory, as well as how to get natural representations of convolution algebras. 

For any oriented cohomology theory $A$, there is an associated formal group law $(R,F)$  where $R=A(\pt)$ is a commutative  ring called the coefficient ring, and $F(u,v)\in R[\![u,v]\!]$. For example, the formal group law associated to $\ch^*$ is $(\ZZ,u+v)$, the one associated to $K$-theory is $(\ZZ[\beta^\pm],u+v-\beta uv)$, and the one associated to $\MGL^*$ is the universal formal group law of Lazard, whose coefficient ring is  called the {\em Lazard ring}, denoted by $\Laz$.

The idea of using formal group laws to study generalized (equivariant) oriented cohomology theories  of flag varieties were first carried out by Bressler--Evens in \cite{BE}. Recently, the basic properties have been studied in explicit forms in the frame work of the so-called formal affine Demazure algebras in \cite{HMSZ}, \cite{CZZ1}, \cite{CPZ}, \cite{CZZ2},  and \cite{CZZ3}.
It is natural to ask, whether there is an convolution algebra acting on the cobordism group or elliptic cohomology group of Springer fibers, by considering convolutions with  corresponding oriented cohomology classes on the Steinberg variety. In the current paper, we answer this question. 

Starting with any formal group law $(R,F)$, in \cite{HMSZ}, Hoffnung, Malag\'on-L\'opez, Savage, and Zainoulline constructed a candidate of the algebra, called the {\it formal affine Hecke algebra}, denoted by $\HF$. This algebra could potentially act on the corresponding oriented cohomology of Springer fibers. The algebra $\HF$ is an $R$-algebra generated by the character group $\Lambda$ of the maximal torus in $G$, a formal variable $x_\gamma$, and elements $J^F_\alpha$ for  simple roots $\alpha$ (see Remark~\ref{rmk:gen_J} for details). We will show in this paper, that the algebra $\HF$, with slight modification from the original definition of \cite{HMSZ}, indeed acts on the corresponding oriented cohomology of the Springer fibers. 
For this purpose, we need to interpret $\HF$ as a convolution algebra of the Steinberg variety. This is the main theorem of this paper. 

\begin{thm}[Theorem~\ref{thm:main}, Theorem~\ref{thm:isom_HF_cob}, and Proposition~\ref{prop:action_springer_fiber}]\label{thm:intr}
Let $G$ be a semi-simple simply connected algebraic group. Let $A$ be an equivariant oriented cohomology theory for any smooth linear algebraic group $H$,  whose associated formal group law is $(R,F)$ where $R=A(\pt)$. Suppose $(R,F)$ together with the associated root system of $G$ satisfies Assumption \ref{assump:strong}. We have the following.
\begin{enumerate}\item There is an $R$-algebra homomorphism  $\Psi_A:\HF\to \End_R(A_{G\times\Gm}(\widetilde{\calN})).$
\item For any simple root $\alpha$, there is a cohomology class $J^A_\alpha\in A_{G\times\Gm}(Z_\alpha)$ on the irreducible component $Z_\alpha$ of the Steinberg variety labelled by $\alpha$, such that $\Psi_A$ sends $J^F_{\alpha}\in \HF$  to the convolution with $J_\alpha^A$.
\item\label{main-intro-3} If the equivariant oriented cohomology theory $A$ satisfies Assumption~\ref{assum:running}, then $\Psi_A$  induces an isomorphism $\HF\cong A_{G\times\Gm}(Z)$, where $A_{G\times\Gm}(Z)$ is endowed with the convolution product;
\item  Suppose $A$ is as in (\ref{main-intro-3}), then for any $x\in \calN$, the $R$-module $A^*(\calB_x)$ admits a natural action of the algebra $\HF$.
\end{enumerate}
\end{thm}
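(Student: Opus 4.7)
The plan for parts (1) and (2) is to define $\Psi_A$ on the generators $\Lambda$, $x_\gamma$, $\{J^F_\alpha\}$ of $\HF$ and to verify the defining relations. Homotopy invariance identifies $A_{G\times\Gm}(\widetilde{\calN})$ with $A_{G\times\Gm}(\calB)$; we let each $\lambda\in\Lambda$ act as cup product with the $A$-theoretic first Chern class $c_1^A(\calL_\lambda)$ of the associated line bundle on $\calB$, and let $x_\gamma$ act by the $\Gm$-equivariant parameter of the scaling weight on the cotangent fibers. For a simple root $\alpha$, the collapsing map $\widetilde{\calN}=T^*\calB\to T^*\calP_\alpha$ is a $\bbP^1$-bundle and $Z_\alpha=\widetilde{\calN}\times_{T^*\calP_\alpha}\widetilde{\calN}$; we define the geometric class $J^A_\alpha\in A_{G\times\Gm}(Z_\alpha)$ as an explicit $R$-linear combination of the fundamental class $[Z_\alpha]$ and the pushforward of $[\widetilde{\calN}]$ along the diagonal $\Delta\colon\widetilde{\calN}\hookrightarrow Z_\alpha$, with coefficients determined by the formal group law $F$ and $c_1^A(\calL_\alpha)$, and set $\Psi_A(J^F_\alpha)$ to be convolution with $J^A_\alpha$.

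The relations among the $\Lambda$ and $x_\gamma$ generators then follow from the action of the formal affine Demazure algebra on $A_{G\times\Gm}(\calB)$ established in \cite{CZZ1,CZZ2}. The quadratic relation satisfied by each $J^F_\alpha$ reduces to a computation inside the $\bbP^1$-bundle $\widetilde{\calN}\to T^*\calP_\alpha$, where equivariant localization at the two relative fixed points yields a closed formula in $F$ that matches the HMSZ normalization. The braid relations reduce, via naturality of convolution with respect to the partial flag varieties attached to rank-two sub-root-systems, to explicit rank-two verifications. For (3), under Assumption~\ref{assum:running}, both $\HF$ and the convolution algebra $A_{G\times\Gm}(Z)$ are free modules of rank $|W|$ over the twisted formal group ring: freeness of $\HF$ is the structural theorem of \cite{HMSZ}, while freeness of $A_{G\times\Gm}(Z)$ follows from the cellular-fibration lemma applied to the decomposition $Z=\bigcup_{w\in W}\overline{T^*_{O_w}(\calB\times\calB)}$ indexed by the $G$-orbits $O_w$ in $\calB\times\calB$. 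We then show that $\Psi_A$ carries a standard $W$-indexed basis of $\HF$ to the geometric basis up to a transition matrix that is upper-triangular with respect to the Bruhat order, which yields the isomorphism.

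For (4), the preimage of $\calB_x\subset\widetilde{\calN}$ under the second projection $p_2\colon Z\to\widetilde{\calN}$ equals $\calB_x\times\calB_x$, and the restriction of the first projection to this preimage is proper. The convolution construction of Chriss--Ginzburg, recast for oriented cohomology in Section~\ref{sec:prelim_coh}, then produces an $R$-bilinear pairing $A_{G\times\Gm}(Z)\otimes_R A^*(\calB_x)\to A^*(\calB_x)$ associative with the convolution product on $A_{G\times\Gm}(Z)$; combining with the isomorphism of (3) yields the advertised $\HF$-action on $A^*(\calB_x)$. We expect the main technical obstacle to be the verification of the quadratic and braid relations in (1): unlike in the $K$-theoretic setting of Lusztig, where one may exploit the multiplicativity of $c_1^K$ and explicit projective-space formulas, here one must manipulate arbitrary power series in $F$, and the rank-two braid check in particular demands careful bookkeeping of the formal group law in the presence of the extra $\Gm$-weight $x_\gamma$, for which the residue interpretation of $\HF$ developed later in the paper will be essential.
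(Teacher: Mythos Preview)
Your approach to parts (1) and (2) is genuinely different from the paper's, and the difference matters. You propose to define $\Psi_A$ on generators and then \emph{verify the defining relations} of $\HF$ geometrically --- quadratic relations via a $\PP^1$-bundle computation and braid relations via rank-two reductions. The paper avoids this entirely. Its key observation is that $\HF$ is \emph{defined} as a subalgebra of $Q^F_W$, and its action on $S\cong A_{G\times\Gm}(\widetilde{\calN})$ is faithful (Corollary~\ref{cor:faithact}). Likewise, $A_{G\times\Gm}(Z)'$ is defined as a subalgebra of $\End_R(S)$. Therefore, to establish the isomorphism it suffices to show that the operator $J^A_\alpha*_Z$ on $S$ coincides with the operator $J^F_\alpha$ on $S$, i.e.\ to match their effect on each $c_1(\calL_\lambda)$. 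This is a rank-\emph{one} computation (\S\ref{subsec:SL2}), carried out via the Quillen--Weyl push-forward formula for a $\PP^1$-bundle. No relations need to be checked: once the generating operators agree in $\End_R(S)$, all relations hold automatically.

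This is not merely a stylistic difference. The braid relations in $\HF$ are \emph{not} the classical ones: Theorem~\ref{thm:braid} shows they carry correction terms $\sum_{\ell(v)\le m_{ij}-2} u^{ij}_v T_{I_v}$ with coefficients $u^{ij}_v\in S$ depending on the formal group law, and already for $m_{ij}=3$ these involve the quantities $\kappa_{ij}$. Verifying such deformed braid relations geometrically in rank two, as you propose, would be substantially harder than anything in the paper, and your closing remark that ``the residue interpretation \ldots\ will be essential'' for this is off-track: the residue construction (Theorem~\ref{thm:residueDH}) plays no role in the proof of Theorem~\ref{thm:main}. Your definition of $J^A_\alpha$ as a combination of $[Z_\alpha]$ and $\Delta_*[\widetilde{\calN}]$ also differs from the paper's, which takes $J^A_\alpha=\frac{c_1(\calJ_\alpha)-c_1(k_q)}{c_1(\calJ_\alpha\otimes k_q^\vee)}$ with $\calJ_\alpha=\pi^*\Omega^1_{\pi_2}$; the Chern-class formulation is what makes the rank-one push-forward computation clean.

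For part (3) your outline is close to the paper's: both use a Bruhat filtration and show the induced map on associated graded pieces is an isomorphism. The paper makes this precise by computing $\Theta_w(J^{\Laz}_{I_w})$ as a product of the classes $\frac{c_1(\calJ_{i_j})-c_1(k_q)}{c_1(\calJ_{i_j}\otimes k_q^\vee)}$ restricted to $T^*_{\calY_w}(\calB\times\calB)$ and invoking Lemma~\ref{lem:fgl1} for invertibility; your ``upper-triangular transition matrix'' is the same idea, but you should say why the diagonal entries are units. Part (4) is as you describe.
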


In~(\ref{main-intro-3}), for singular variety $X$,  the definition $A_{G\times\Gm}(X)$ under Assumption~\ref{assum:running} is explained in \S~\ref{subsec:coh_cob}.

The properties of the algebra $\HF$ are studied in Sections~\ref{sec:FAHA}-\ref{sec:furtherProp}. In particular, we give a presentation of this algebra by generators and relations, and show that the action of this algebra on the equivariant oriented cohomology of $\widetilde{\calN}$ is faithful.  We also prove the PBW property, i.e., there is a filtration on this algebra whose associated graded algebra is isomorphic to the degenerate affine Hecke algebra. The relations between the formal affine Hecke algebras associated to different formal group laws are also studied. In particular, if the coefficient ring $R$ contains $\Q$,  the corresponding formal affine Hecke algebra is isomorphic to the degenerate affine Hecke algebra (after completion). This isomorphism, using Theorem~\ref{thm:intr}, is identified with the bivariant Riemann-Roch functor of Panin and Smirnov. We would like to emphasize that for two isomorphic formal group laws, the corresponding Hecke algebras are isomorphic. However, the essential operators in the formal affine Hecke algebra, called the Demazure-Lusztig operators, depend on the formal group law itself rather than its isomorphism class.

It worth mentioning that we made adjustment to the original definition of the formal affine Hecke algebra of \cite{HMSZ}, not only for the purpose of convolution construction.\footnote{After the first version of this paper was announced, Kirill Zainoulline kindly pointed out to us that the definition of  $\HF$ in our paper was also considered  during the preparation of \cite{HMSZ}.  In the final version of \cite{HMSZ}, another definition was used in order to get better combinatorial properties. The latter definition is not explicitly involved in this paper, but it serves as an inspiration in finding the present form of the geometric construction.} In fact,  when $(R,F)$ is the elliptic formal group law, the corresponding elliptic affine Hecke algebra has been considered by Ginzburg--Kapranov--Vasserot in \cite{GKV97} from a totally different approach. In Section~\ref{sec: struc_res}, we prove a structure theorem of the formal affine Hecke algebra and interpret it as the residue-vanishing conditions of \cite{GKV97}.  The definition of the Demazure-Lusztig operators we give matches with the one in \cite{GKV97}. In other words, the aim of the current paper is to show the unity among \cite{Lus85}, \cite{GKV97}, and \cite{HMSZ}. For general Kac-Moody root systems, the unity between \cite{GKV97} and \cite{HMSZ} is also known, and will appear in \cite{CZZ}. 

When the oriented cohomology theory is the $K$-theory, the definition of equivariant $K$-theory we are using differs from the one used in \cite{Lus85} by a completion.  When considering the representations coming from Springer fibers, the classical Hecke algebra becomes interesting when the equivariant parameters are specialized to points other than the identity. Similarly, when $A$ is the elliptic cohomology theory,  the representations become interesting when the equivariant parameters are specialized to arbitrary points on the elliptic curve. 
On the other hand, through our formal approach, the equivariant parameters acts nilpotently on the Springer fibers, hence the only way to specialize them is to let them act trivially. 
There is a delocalized equivariant elliptic cohomology, postulated in \cite{Gr}, axiomized in \cite{GKV95}, and constructed in details by works of many people, a far from being complete list of which including \cite{And03}, \cite{Chen},  \cite{Gep}, and \cite{Lur}.
In fact, the elliptic affine Hecke algebra of \cite{GKV97} defined using the residue-vanishing construction is isomorphic to the equivariant elliptic cohomology of the Steinberg variety. Hence, its representations can be constructed from Springer fibers, as is sketched in \cite{Gr}. 
In order to make the results in the current paper available to general oriented cohomology theory, we postpone the detailed study of geometric representations of the elliptic affine Hecke algebra, following the outlines in \cite{Gr}, to a future publication \cite{ZZ}.

Also, we point out that there are many convolution constructions used in geometric representation theory. For example, the representations of the affine quantum groups (resp. the Yangians) are studied by considering the equivariant $K$-theory (resp. the Borel-Moore homology) of the quiver varieties in  \cite{Nak99} and \cite{Va00}. A Riemann-Roch type theorem, relating the completion of the Yangian and the quantum loop algebra, is proved in \cite{GTL}. It has been expected that (see e.g., \cite{Gr} and \cite{GKV95}) the elliptic quantum group of Felder in \cite{Fed} acts on the equivariant elliptic cohomology of quiver varieties.  This statement will be verified in \cite{Zha}. One can ask, for an arbitrary oriented cohomology theory, what the algebra is that acts on the corresponding equivariant oriented cohomology of the quiver varieties. This question has been studied by Yang and the first named author in  \cite{YZ}.

\subsection*{Acknowledgments}
The first named author is grateful to  Roman Bezrukavnikov and  Valerio Toledano Laredo for encouragements, and to  Valerio Toledano Laredo for introducing him to \cite{GKV97} and \cite{HMSZ} so that this joint project became possible. The authors would like to thank  Baptiste Calm\`es, Marc Levine, and Kirill Zainoulline for helpful discussions. The second named author is supported by PIMS and NSERC grants of Stefan Gille and Vladimir Chernousov.  This paper is prepared when both authors are hosted by the Max Plank Institute for Mathematics in Bonn. 

\section{Classical affine Hecke algebra}
In this section we briefly recall some basic notions of classical Hecke-type algebras.
\subsection{Root datum}
We fix the notation of root data following  \cite{CZZ1}. A {\it root datum} is an embedding $\Sigma \hookrightarrow \La^\vee, \al\mapsto \al^\vee$ of a non-empty subset $\Sigma$ of a lattice $\La$ into its dual lattice, satisfying certain conditions. The elements in $\Sigma$ are called {\it roots}, and the sub-lattice of $\La$ generated by $\Sigma$ is called the {\it root lattice}, denoted by $\La_r$. The dimension  of $\La_\Q:=\La\otimes_\Z\Q$ is called the {\em rank} of the root datum. The set $\La_w=\{\om\in \La_\Q\mid \angl{\om,\al^\vee}\in \Z\text{ for all }\al\in \Sigma\}$ is called the {\it weight lattice}. A root datum is called {\it irreducible} if it is not a direct sum of root data of smaller ranks, and it is called semi-simple if $\La_r\otimes_\Z\Q=\La_\Q$. From now on we always assume that the root datum is semi-simple. Note that in this case we have $\La_r\subseteq \La\subseteq \La_w$.

Denote $[n]=\{1,...,n\}$ where $n$ is the rank of the root lattice. The root  lattice has a basis $\Phi=\{\al_1,...,\al_n\}$ such that any $\al\in \Sigma$ is a $\Z$-linear combination of $\al_i$'s with either all positive or all negative coefficients. So there is a decomposition $\Sigma=\Sigma^+\sqcup \Sigma^-$. We call $\Sigma^+$ (resp. $\Sigma^-$) the set of {\it positive} (resp. {\it negative)} roots. We call the set $\{\om_i\}_{i\in [n]}$ such that $\angl{\om_j,\al_i^\vee}=\de_{ij}$ for $i,j\in [n]$ the set of fundamental weights, which is a basis of $\La_w$. 
Expressing $\Phi$ in terms of linear combinations of fundamental weights, the coefficients matrix is called the Cartan matrix of the root datum. The root datum is called {\it simply-connected} (resp. {\it  adjoint}) if $\La=\La_w$ (resp. $\La=\La_r$), and it is denoted by $\calD_n^{sc}$ (resp. $\calD_n^{ad}$), where $\calD=A,\dots,G$ is one of the Dynkin types. Let $\ttt$ be the torsion index of the associated simply-connected root datum defined in  \cite[\S5]{Dem}.

The reflection $\La\to \La:\la\mapsto \la-\angl{\la,\al^\vee}\al$ defined by $\al\in \Sigma$ is called a simple reflection, denoted by $s_\al$. The group $W$ generated by all simple reflections is called the Weyl group. It is also generated by $s_i:=s_{\al_i}$, $i\in [n]$. For $i\neq j\in [n]$, we have $(s_is_j)^{m_{ij}}=1$ for $m_{ij}\in \{2,3,4,6\}$. We can define the Bruhat order $(W,\le)$. The length of $w$ is denoted by $\ell(w)$. Let $w=s_{i_1}\cdots s_{i_k}$ be a reduced decomposition, then we define
\[
\Sigma(w)=\{\al_{i_1}, s_{i_1}(\al_{i_2}), \cdots ,s_{i_1}(\cdots (s_{i_{k-1}}(\al_{i_k}))\cdots)\}=w\Sigma^-\cap \Sigma^+.\]
\subsection{Classical Hecke-type algebras} There are many well-studied Hecke-type algebras.
\begin{example}The {\it Hecke algebra} $H$ associated to the Weyl group $W$ is the $\Z[q,q^{-1}]$-algebra with unit, generated by elements $T_i, i\in [n]$ subject to the relations
\[
(T_i+1)(T_i-q)=0,\]\[ ~\underbrace{T_{i}T_jT_i\cdots}_{m_{ij} \text{ times}} -\underbrace{T_jT_iT_j\cdots}_{m_{ij} \text{ times}}=0, ~\text{ if }(s_is_j)^{m_{ij} }=1.
\]
\end{example}
\begin{example}\label{ex:Haff}The {\it affine Hecke algebra} $H_{\aff}$ associated to $W$ is the algebra $\Z[q,q^{-1}][\La] \otimes_{\Z[q,q^{-1}]}H$, with the factors $H$ and $\Z[q,q^{-1}][\La]$ being subalgebras, and the relations between the two tensor factors being
\[
T_ie^{s_i(\la)}-e^\la T_i=(1-q)\frac{e^\la-e^{s_i(\la)}}{1-e^{-\al_i}}.
\]
For any $w=s_{i_1}\cdots s_{i_k}\in W$, define $T_w=T_{i_1}\cdots T_{i_k}$. The element $T_w$ in $H_{\aff}$ depends only on $w$ itself, not the decomposition. As a left $\Z[q,q^{-1}][\La]$-module, $H_{\aff}$ has a basis $\{T_w\}_{w\in W}$. Let $\iota:\Z[q,q^{-1}][\La]\to \Z[q,q^{-1}][\La]$ be the automorphism mapping $e^\la$ to $e^{-\la}$ and $q\to q$, then by \cite[Proposition 7.6.38]{CG}, the actions on $\Z[q,q^{-1}][\La]$ of $T_i$ and $u\in \Z[q,q^{-1}][\La]$ are 
\begin{equation}\label{eq:Haffact}
T_i:e^\la\mapsto \frac{e^\la-e^{s_i(\la)}}{e^{\al_i}-1}-q\frac{e^\la-e^{s_i(\la)+\al_i}}{e^{\al_i}-1}, \text{ and } u:e^\la\mapsto \iota(u)e^\la.\end{equation}
Note that the action of $\Z[q,q^{-1}][\La]$ on itself is a  multiplication  involving the map $\iota$.
\end{example}
\begin{example}
The {\it degenerate affine Hecke algebra} $H_{deg}$ associated with the Weyl group is  $ S_{\Z[\ep]}^*(\La)\otimes_{\Z[\ep]}\Z[\ep][W]$ as a $\Z[\ep]$-module,  where both $\Z[\ep][W]$ and $S^*_{\Z[\ep]}(\La)$ are subalgebras, and 
\begin{equation}\label{eq:Hdeg}
\theta_i\la-s_i(\la)\theta_i=\ep \angl{\la,\al_i^\vee}, ~i\in [n], ~\la\in \La.
\end{equation} Here $\theta_i$ is the element in $\Z[\ep][W]$ corresponding to $s_i\in W$. The algebra $H_{deg}$ has a natural grading by $\deg(\theta_i)=0$, $\deg (\ep)=1$ and $\deg (\la)=1$  for any $\la\in \La$. For each $w=s_{i_1}\cdots s_{i_k}\in W$, the element $\theta_w:=\theta_{i_1}\cdots \theta_{i_k}$ depends only on $w$. As a left $S^*_{\Z[\ep]}(\La)$-module, there is a basis $\{\theta_w\}_{w\in W}$. The action of $H_{deg}$ on $S^*_{\Z[\ep]}(\La)$ is 
\begin{equation}
\theta_i: \la\mapsto \ep\frac{\la-s_i(\la)}{\al_i}+s_i(\la), \text{ and } \mu:\la\mapsto \mu\la, ~\mu, \la\in \La.
\end{equation}
\end{example}

\section{Definition of the formal affine Hecke algebra}\label{sec:FAHA}
In this section we define the formal affine Hecke algebra, which was first  introduced in \cite{HMSZ}. We will use a slightly modified version, suited for our geometric purpose but still maintain all the algebraic properties.

\subsection{Formal group laws}
Let $R$ be a commutative ring, and let $(R,F)$ be a formal group law over $R$, that is, $F(x,y)\in R\lbr x, y\rbr$ such that 
\[
F(x,y)=F(y,x), ~F(x,0)=0, F(x,F(y,z))=F(F(x,y),z).
\]
 Let $-_Fx\in R\lbr x\rbr$ be the power series such that $F(x,-_Fx)=0$, and write $x+_Fy=F(x,y)$ and $x-_Fy=x+_F(-_Fy)$. Denote $\ka^F(x)=\frac{1}{x}+\frac{1}{-_Fx}$ and $\mu^F(x)=\frac{-_Fx}{-x}$.
\begin{lemma} \label{lem:fglkappa}
With notations as above, we have
\begin{enumerate}
\item \cite[Lemma 4.3]{HMSZ} $\ka^F(x)=0$ if and only if $-_Fx=-x$, if and only if $F(x,y)=(x+y)h(x,y)$ for some power series $h(x,y)$;
\item $\ka^F(x)=1$ if and only if $-_Fx=\frac{x}{x-1}$.
\end{enumerate}
\end{lemma}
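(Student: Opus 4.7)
Both parts are essentially algebraic identities together with one power-series divisibility argument, so the plan is short.

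For part (1), I would first dispense with the equivalence $\kappa^F(x)=0\Leftrightarrow -_Fx=-x$ by direct manipulation: $\kappa^F(x)=\frac{1}{x}+\frac{1}{-_Fx}$ vanishes (in the appropriate localization of $R\lbr x\rbr$) precisely when $-_Fx=-x$. For the remaining equivalence $-_Fx=-x\Leftrightarrow F(x,y)=(x+y)h(x,y)$, the $(\Leftarrow)$ direction is immediate by substituting $y=-x$, which gives $F(x,-x)=0$ and hence $-_Fx=-x$ by uniqueness of the formal inverse. For the $(\Rightarrow)$ direction, the key step is to perform the change of variable $z=x+y$ in $F(x,y)$: the series $G(x,z):=F(x,z-x)\in R\lbr x,z\rbr$ satisfies $G(x,0)=F(x,-x)=0$, so $z$ divides $G(x,z)$ in $R\lbr x,z\rbr$; substituting back $z=x+y$ yields the desired factor $(x+y)$.

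For part (2), the computation is purely formal: $\kappa^F(x)=1$ rearranges to $\frac{1}{-_Fx}=1-\frac{1}{x}=\frac{x-1}{x}$, which is equivalent to $-_Fx=\frac{x}{x-1}=-x-x^2-x^3-\cdots\in R\lbr x\rbr$. One should note that this expression is a genuine power series (no constant term), so it is a legitimate candidate for $-_Fx$, and hence the equivalence is valid.

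The only non-cosmetic step is the divisibility $z\mid G(x,z)$ in $R\lbr x,z\rbr$, but this is standard: writing $G(x,z)=\sum_{i,j}g_{ij}x^iz^j$, the condition $G(x,0)=0$ forces $g_{i0}=0$ for all $i$, so $G(x,z)=z\cdot \tilde h(x,z)$ with $\tilde h\in R\lbr x,z\rbr$. I do not anticipate any real obstacle; the lemma is stated to fix terminology for later use where the conditions $\kappa^F(x)=0$ or $\kappa^F(x)=1$ will be imposed as hypotheses on $F$.
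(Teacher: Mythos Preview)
Your proposal is correct. The paper does not give a proof at all---it simply writes ``The proofs are straightforward'' and cites \cite[Lemma 4.3]{HMSZ} for part (1)---and your argument is exactly the kind of direct verification the authors had in mind; in particular the change of variable $z=x+y$ (equivalently, the observation that $F(x,-x)=0$ forces $(x+y)\mid F(x,y)$ via the isomorphism $R\lbr x,y\rbr\cong R\lbr x,z\rbr$) is the standard way to get the factorization in part (1).
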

The proofs are straightforward.

For simplicity, we will refer to the two cases in Lemma \ref{lem:fglkappa} as $\ka^F=0$ and $ \ka^F=1$, respectively. If $\ka^F=1$, it is called a normal formal group law in the terminologies of \cite{Zh13}. For normal formal group laws, a lot of notations in this paper can be simplified.

\begin{lemma}\label{lem:fgl1} For any formal group law $F$,  $\frac{x-_Fy}{x-y}$ is invertible in $R\lbr x,y\rbr$.
\end{lemma}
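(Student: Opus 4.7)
The plan is to show that $x-y$ divides $x-_F y$ inside $R\lbr x, y\rbr$ and that the resulting quotient has unit constant term, from which the invertibility of $\frac{x-_Fy}{x-y}$ follows immediately.

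First I would justify the change of variables $u = x-y$, $v = y$, which gives a ring automorphism of $R\lbr x, y\rbr$: each monomial $x^a y^b = (u+v)^a v^b$ expands as a polynomial in $u, v$ and contributes to only finitely many coefficients of $u^i v^j$, so the substitution defines an honest power series. Under this change of variables, the principal ideal $(x-y)$ corresponds to $(u)$, so a power series $f \in R\lbr x, y\rbr$ is divisible by $x-y$ exactly when its restriction to $x = y$ vanishes.

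Next I would apply this criterion to $x -_F y$. By the definition $-_F x$ and the group-law axioms, $y -_F y = F(y, -_F y) = 0$, so $x -_F y$ indeed vanishes on the diagonal. Hence there exists $h \in R\lbr x, y\rbr$ with $x -_F y = (x - y)\, h(x, y)$.

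It remains to check that $h$ has invertible constant term. For this I would specialize $y = 0$. Using $-_F 0 = 0$ and $F(u, 0) = u$, one has $x -_F 0 = x$, so $x \cdot h(x, 0) = x$ in $R\lbr x\rbr$, forcing $h(x, 0) = 1$ and in particular $h(0,0) = 1 \in R^\times$. Thus $h$ is a unit in $R\lbr x, y\rbr$, and $\frac{x -_F y}{x - y} = h(x, y)$ is invertible. The argument is essentially formal; the only point worth being careful about is the change of variables, and I do not foresee any real obstacle.
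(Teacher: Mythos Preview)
Your proof is correct and follows essentially the same approach as the paper: both arguments deduce divisibility of $x-_Fy$ by $x-y$ from the vanishing $y-_Fy=0$ on the diagonal, and then show the quotient has constant term $1$. Your specialization $y=0$ to get $h(x,0)=1$ is a slightly slicker route to the constant term than the paper's explicit expansion of $x+_F(-_Fy)$, but the underlying idea is the same.
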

\begin{proof}We use the idea from the proof of \cite[Lemma 9.1]{CPZ}. Write $x-_Fy=\sum_{i=0}^\infty h_i(y)x^i$ with $h_i\in R\lbr y\rbr$, then $x-_Fy=(x-_Fy)-(y-_Fy)=\sum_{i=1}h_i(y)(x^i-y^i)$, hence $(x-y)$ divides $(x-_Fy)$. Moreover,  $\frac{x-_Fy}{x-y}$ is invertible if and only if $h_1(0)$ is invertible in $R$. Write $x+_Fy=x+y+xyg(x,y)$ with $g(x,y)\in R\lbr x,y\rbr$, and denote $-_Fy=y\tilde g(y)$ for some $\widetilde g(y)\in R\lbr y\rbr$, then $x-_Fy=x+_F(-_Fy)=x+y\tilde g(y)+xy\tilde g(y)g(x,y)$, so $h_1(0)=1$. 
\end{proof}

\begin{example} \label{ex:fgl}
\begin{enumerate}
\item For any commutative  graded ring $R$, the element $F_a(x,y)=x+y$ in $R[\![x,y]\!]$ defines the {\it additive formal group law}. We have $-_{F_a}x=-x$, $\ka^{F_a}=0$, and $\mu^{F_a}(x)=1$.
\item For any commutative  graded ring $R$, the element $F_m(x,y)=x+y-\be xy$ in $R[\![x,y]\!]$ with $ \be\in R$ defines a {\it multiplicative formal group law}. We have $-_{F_m}x=\frac{x}{\be x -1}$, $\ka^{F_m}=\be$ and $\mu^{F_m}(x)=\frac{1}{1-\beta x}$.
\item A {\it Lorentz formal group law} is defined by $F_l(x,y)=\frac{x+y}{1-\be xy}$ with $\be\in R$. We have $-_{F_l}x=-x$, $\ka^{F_l}=0$ and $\mu^{F_l}=1$.
\item The {\it elliptic formal group laws}: Given a family of elliptic curves over some ring $R$, there is a formal group law over the ring $R$, coming from the additive structure of the elliptic curves and a choice of parameter along the zero section over $\Spec R$. More precisely, fixing a local uniformizer $t$ around the identity section, the expansion of the group law of $E$ in terms of $t$ gives a formal group law $F_E$ with coefficients in $R$. 
For example,  in most of the cases one can choose the local parameter to be $\fl=\frac{\wp'}{\wp}$ where $\wp$ is the Weierstrass $\wp$-function, or $\fl=\sin$ where $\sin$ is the Jacobi sine-function. The corresponding formal group law is determined by $F(u,v)=\fl(\fl^{-1}(u)+\fl^{-1}(v))$. Note that for both choices, one has $\fl(-t)=-\fl(t)$.
\item There is a {\it universal formal group law} $(\Laz, F_{\Laz})$, whose coefficient ring $\Laz$, called the Lazard ring, is a polynomial ring in countably many generators over $\ZZ$. For any formal group law $(R, F)$, there exists a unique ring homomorphism $\phi_F: \Laz \to R$ such that $F=\phi_F(F_{\Laz})$.
\end{enumerate}
\end{example}

\subsection{Formal group algebra}\label{subsec:fga}
Let $\La$ be a free abelian group of rank $n$. Let $R\lbr x_\La\rbr$ be the ring of power series in variables $\{x_\la|\la\in \La\}$. Define the {\it formal group algebra} $R\lbr\La\rbr_F$ to be the quotient 
\[
R\lbr x_\La\rbr/\left (x_0, x_\la+_Fx_\mu-x_{\la+\mu}\right),
\]
where $x_0$ is the element determined by $0\in \La$. Let $\IF$ be the kernel of the augmentation map $R\lbr\La\rbr_F \to R, x_\la\mapsto 0$, then there is a filtration on $R\lbr\La\rbr_F$:
\[
R\lbr\La\rbr_F =\IF^0\supsetneq \IF^1\supsetneq \IF^2\supsetneq \cdots.
\]
The associated graded ring $\Gr R\lbr\La\rbr_F:=\bigoplus_{i\ge 0}\IF^i/\IF^{i+1}$ is isomorphic to the symmetric algebra $S_R^*(\La)$.  Indeed, if $\{\om_1,...,\om_n\}$ is a basis of $\La$, then $R\lbr \La\rbr_F\cong R\lbr \om_1,...,\om_n\rbr$.

\begin{example}
\begin{enumerate}
\item If $F=F_a$, then $R\lbr \La \rbr_{F_a}\cong S_R^*(\La)^\wedge, x_\la\mapsto \la$, where $S_R^*(\La)^\wedge$ is the completion of the symmetric algebra  at the augmentation ideal $\ker(S_R^*(\La)\to R, \la\mapsto 0)$.
\item If $F(x,y)=x+y-xy$, then we have an isomorphism $R\lbr \La \rbr_{F}\cong R[\La]^\wedge, x_\la\mapsto 1-e^{-\la}$, where $R[\La]^\wedge$ is  the completion of the group ring at the augmentation ideal 
$\ker (R[\La] \to R, e^\la\mapsto 1)$.
\end{enumerate}
\end{example}

The definition of $\FAL$ is functorial with respect to homomorphisms of  free abelian groups $\La\to \La'$, homomorphisms of  coefficient rings $R\to R'$,  and homomorphisms of formal group laws $h:(R,F)\to (R,F')$ over $R$.  We explain the latter two functoriality properties in details here.

If $(R,F)$ is a formal group law and  $f:R\to R'$ is a morphism of rings, then $f$ induces a formal group law $f(F)$ with coefficients in $R'$, by applying $f$ to the coefficients of $F$ one-by-one. Then the natural map $R\lbr \La\rbr_F\to R'\lbr \La \rbr_{f(F)}, x_\la\mapsto x_\la$ is a well-defined algebra homomorphism. Moreover, $R\lbr \La\rbr_F\hat{\otimes}_R R'\cong R'\lbr\La\rbr_{f(F)}$, where $\hat{\otimes}$ is the completed tensor in the following sense. In $R\lbr\La\rbr_F$, the power of ideals $\IF^i$ defines a filtration. It induces a filtration on the usual extension by scalars $R\lbr \La\rbr_F\otimes_R R'$.  The completed tensor product, denoted by $R\lbr \La\rbr_F\hat{\otimes}_R R'$, is the completion of the ring $R\lbr \La\rbr_F{\otimes}_R R'$ with respect to this filtration.

Following \cite[\S2.5]{CPZ}, a homomorphism of formal group laws $h:(R,F)\to (R,F')$ is a power series $h(x)\in R\lbr x\rbr$  such that 
\begin{equation}\label{eqn:morph_fgl}
h(x+_Fy)=h(x)+_{F'}h(y).
\end{equation}
It induces a ring homomorphism $\phi_{F', F}:R\lbr\La\rbr_{F'}\to \FAL$ defined by $\phi_{F', F}(x_\la)=h(x_\la)$ and extended by linearity and multiplicativity. That is, if $f(x)$ is any power series, then $\phi_{F',F}(f(x_\la)):=f(h(x_\la))$. It is well defined since 
\[
\phi_{F',F}(x_{\la+\mu})=h(x_{\la+\mu})=h(x_\la+_Fx_\mu)=h(x_\la)+_{F'}h(x_\mu)=\phi_{F',F}(x_\la)+_{F'}\phi_{F',F}(x_\mu)=\phi_{F',F}(x_\la+_{F'}x_\mu).
\]

If $R$ contains $\Q$, then any formal group law $(R,F)$ is isomorphic to $(R,F_a)$. More precisely, for any $(R,F)$ such that $R$ is a $ \Q$-algebra, according to \cite[Ch. IV, \S1]{Fr}, there exists an invertible $\fl(x)\in R\lbr x\rbr$  such that $\fl(\fl^{-1}(x))=\fl^{-1}(\fl(x))=x$ and $\fl:(R,F_a)\to (R,F)$ is an isomorphism of formal group laws. So $\fl(x+y)=\fl(x)+_F\fl(y)$, and it induces 
\[
\phi_{F,F_a}: \FAL \to R\lbr\La\rbr_{F_a}, ~x_\la\mapsto \fl(x_\la).
\]
Clearly $\fl^{-1}$ induces the inverse of $\phi_{F,F_a}$, and therefore$\phi_{F,F_a}$ is an isomorphism.
\subsection{Twisted formal group algebra} 
From now on we always fix a root datum $\Sigma\hookrightarrow \La^\vee$.
\begin{definition}We say that   $R\lbr\La\rbr_F$ is $\Sigma${\it -regular} if $x_\al$ is regular in $R\lbr\La\rbr_F$ for any $\al\in \Sigma$.
\end{definition}
By \cite[Definition 4.4]{CZZ1}, $R\lbr\La\rbr_F$ is $\Sigma$-regular if $2$ is regular in $R$, or if the root datum does not contain an irreducible component $C_k^{sc}, k\ge 1$. In this paper we  assume that $R\lbr\La\rbr_F$ is always $\Sigma$-regular. Sometimes we may assume the following stronger condition, which is essential due to \cite[Lemma 3.5]{CZZ1}, and is used in the proof of the structure theorem (Theorem \ref{thm:structHecke}) of the formal affine Hecke algebra. 
\begin{assumption}\label{assump:strong}We assume that the torsion index $\ttt$ is regular, and in addition $2$ is invertible in $R$ if the root datum  contains an irreducible component of type $C_l^{sc}, l\ge 1$.
\end{assumption}

We will need the following assumption in Lemma \ref{lem:div} and Theorem \ref{thm:residueDH}:
\begin{assumption} \label{assump:div} For each irreducible component of the root datum, assume that the corresponding integers or formal integers in Table \ref{tab:div} are regular in $R$ or $R\lbr x\rbr$, and  that $2$ is invertible in $R$ if the root datum contains a component of type $C_l^{sc}, l\ge 1$.
\end{assumption}

{\small
\newcommand{\doublecell}[2][c]{\begin{tabular}[#1]{@{}c@{}}#2\end{tabular}}
\newcommand{\doublecelltop}[2][t]{\begin{tabular}[#1]{@{}c@{}}#2\end{tabular}}
\begin{table}
\begin{tabular}{l||c|c|c|c|c|c|c|c|c|}
Type & \doublecelltop{$A_l$ \\ $(l\geq 2)$} & \doublecelltop{$B_l$ \\ $(l\geq 3)$} & \doublecelltop{$C_l$ \\ $(l\ge 2)$} & \doublecelltop{$D_l$ \\ $(l\geq 4)$} & $G_2$ & $F_4$ & $E_6$ & $E_7$ & $E_8$ \\
adjoint & $\emptyset$ & $2\cdot_F$ & $2\cdot_F$ & $\emptyset$ & \doublecell{$2\cdot_F$ \\ and $3\cdot_F$} & $2\cdot_F$ & $\emptyset$ & \doublecell{$2\cdot_F$ \\ or $3\cdot_F$} & \doublecell{$2\cdot_F$ \\ or $3\cdot_F$} \\
non adjoint & $|\La/\La_r|$ & $2$ & $2 \in R^\times$ & $2$ & - & - & $3$ & $2$ & - \\  
\end{tabular}
\medskip
\caption{Integers and formal integers assumed to be regular in $R$ or $R\lbr x \rbr$ in Assumption \ref{assump:div}.}\label{tab:div} 
\end{table}
}
\begin{lemma}\cite[Lemma 2.7]{CZZ2}\label{lem:div} Under Assumption \ref{assump:div}, $R\lbr \La\rbr_F$ is $\Sigma$-regular. For any two positive roots $\al\neq \be$ and $x\in R\lbr \La\rbr_F$, $x_\al|x_\be x$  implies  $x_\al|x.$
\end{lemma}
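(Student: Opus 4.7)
Both claims should reduce to statements about linear forms in the polynomial ring $S^*_R(\La)$ via the identification $\gr R\lbr\La\rbr_F \cong S^*_R(\La)$ from \S\ref{subsec:fga}. Under this identification the leading linear term of $x_\lambda$ is $\lambda$ itself, so $x_\alpha$ is regular in $R\lbr\La\rbr_F$ iff the linear form $\alpha$ is regular in $S^*_R(\La)$, which in turn happens iff the gcd of the coefficients of $\alpha$ in some $\ZZ$-basis of $\La$ is regular in $R$ (modulo the usual Weierstrass-type lift from the graded ring to the filtered ring $R\lbr\La\rbr_F$, which is complete with respect to $\IF$).

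\textbf{$\Sigma$-regularity.} By the Weyl group action on $R\lbr\La\rbr_F$ it suffices to check regularity for simple roots. For each Dynkin type and admissible lattice, expressing the simple roots in terms of a $\ZZ$-basis of $\La$ produces coefficient gcds that are precisely the integers listed in the relevant row of Table~\ref{tab:div} (or the formal analogues $n\cdot_F$ in the adjoint cases, where no integer basis is adapted to $\alpha$ and one must work at the power series level). Assumption~\ref{assump:div} is then tailored to supply exactly this regularity.

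\textbf{Divisibility.} Given distinct positive roots $\alpha\neq\beta$, I would first reduce to the rank two sublattice $L=\La\cap\QQ\langle\alpha,\beta\rangle$: choose a $\ZZ$-basis of $\La$ whose first two elements span $L$, so that $R\lbr\La\rbr_F$ is power series over $R\lbr L\rbr_F$ in the remaining variables, and divisibility by $x_\alpha\in R\lbr L\rbr_F$ passes to a coefficient-wise statement in $R\lbr L\rbr_F$. Within rank two I would choose coordinates $(y_1,y_2)$ on $L$ so that $x_\alpha$ becomes a unit multiple of $y_1$: the change of basis is available thanks to Assumption~\ref{assump:div}, since the relevant gcds are either units in $R$ or regular formal integers $2\cdot_F,3\cdot_F\in R\lbr x\rbr$ that can be inverted after passing to the formal group algebra. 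In these coordinates $x_\beta=cy_1+dy_2+(\text{higher})$ with $d$ regular in $R$ (because $\alpha,\beta$ are linearly independent and again by Assumption~\ref{assump:div}), and a direct expansion of power series shows that if $y_1$ divides $(dy_2+\cdots)\cdot x$ with $d$ regular, then $y_1$ divides $x$. Lifting back via completeness in the $\IF$-adic topology yields $x_\alpha\mid x$ in $R\lbr\La\rbr_F$.

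\textbf{Main obstacle.} The crux is justifying, case-by-case for each rank-two root subsystem ($A_1\times A_1$, $A_2$, $B_2$, $G_2$) and each admissible lattice, that Assumption~\ref{assump:div} supplies precisely the regularity needed both to make $x_\alpha$ a coordinate up to a unit and to keep $x_\beta$ regular modulo $x_\alpha$. The non simply-laced adjoint cases $G_2^{ad}$ and $F_4^{ad}$ are the most delicate, since there one has no $\ZZ$-basis of $\La$ adapted to $\alpha$ and must perform the basis change at the formal level, invoking regularity of $2\cdot_F$ and/or $3\cdot_F$ in $R\lbr x\rbr$ rather than of literal integers in $R$.
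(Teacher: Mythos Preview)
The paper does not supply its own proof of this lemma; it simply cites \cite[Lemma~2.7]{CZZ2}. Your overall strategy---pass to the associated graded $S^*_R(\La)$ to detect regularity, reduce the divisibility statement to a rank-two saturated sublattice, and then run a case analysis over the rank-two root subsystems---is indeed the approach taken in that reference.

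There is, however, a genuine confusion in your sketch. You write that the formal integers $2\cdot_F,3\cdot_F$ ``can be inverted after passing to the formal group algebra.'' This is false: Assumption~\ref{assump:div} only requires these power series to be \emph{regular} in $R\lbr x\rbr$, not invertible, and regularity does not permit inversion. More importantly, you have misidentified their role. In the adjoint cases $\La=\La_r$, the simple roots already form a $\ZZ$-basis of $\La$, so after a Weyl group move one may take $x_\alpha$ to be literally a coordinate $y_1$---no basis change ``at the formal level'' is needed, contrary to what you say about $G_2^{ad}$ and $F_4^{ad}$. The formal integers enter at the \emph{next} step: one must show that $x_\beta$ is regular modulo $x_\alpha$. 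For instance, in $G_2^{ad}$ with $\alpha=\alpha_2$ and $\beta=3\alpha_1+\alpha_2$, one has $x_\beta\equiv 3\cdot_F x_{\alpha_1}\pmod{x_{\alpha_2}}$, and it is precisely the regularity of $3\cdot_F$ that makes this a non-zerodivisor. Conversely, it is the \emph{non-adjoint} cases where a simple root may fail to be primitive in $\La$ (e.g.\ $\alpha_l=2\omega_l-\omega_{l-1}$ in $C_l^{sc}$), and this is why Table~\ref{tab:div} demands $2\in R^\times$ there rather than merely regular. So your diagnosis of which cases are delicate, and why, is essentially inverted.
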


Let $\Ga$ be a free abelian group of rank 1 generated by $\ga\in \Ga$. We consider the algebra 
\[
S=S_F:=\FA\cong R\lbr \Ga\rbr_F\lbr \La\rbr_F.\] 
The action of the Weyl group $W$ on $\La$ induces an action of $W$ on $S$. More precisely, $w(x_\ga)=x_\ga$ and $w(x_\la)=x_{w(\la)}$ for any $ \la\in \La$. For any $\al\in \Sigma$, we define 
\[
\ka^F_\al=\frac{1}{x_\al}+\frac{1}{x_{-\al}} ~\text{ and }~\mu^F_\al=\frac{x_{-\al}}{-x_\al}
\]
in $S$. The {\it formal Demazure operator} and the {\it push-pull operator} are defined to be, respectively
\[
\Dem^F_\al(u)=\frac{u-s_\al(u)}{x_\al}\hbox{, and } ~C^F_\al(u)=\ka^F_\al u-\Dem^F_\al(u)=\frac{u}{x_{-\al}}+\frac{s_\al(u)}{x_\al}\hbox{, for any} ~u\in S.
\]
They are $R$-linear operators on $S$. We will skip the superscript $F$ if there is no confusion, and denote $\ka_i=\ka_{\al_i}, x_i=x_{\al_i}, x_{-i}=x_{-\al_i}, \Dem_i=\Dem_{\al_i}$ and $C_i=C_{\al_i}$. For any sequence $I=(i_1,...,i_k)$ with $i_j\in [n]$, denote $|I|=k$ and define
\[
\Dem_I(u)=\Dem_{i_1}\circ \cdots \circ \Dem_{i_k}(u), ~u\in S.
\]
We say that $I_w=(i_1,...,i_k)$ is a {\em reduced sequence} of $w$ if $w=s_{i_1}\cdots s_{i_k}$ is a reduced decomposition.   Unless $F$ is the additive formal group law or a multiplicative formal group law, the definition of $\Dem_{I_w}$ depends on $I_w$ (\cite[Theorem 3.7]{BE}).

\begin{remark}The operators $\Dem_\al$ and $C_\al$ are historically  considered by Demazure for the first time in \cite{Dem} in the study of the Chow rings of flag varieties. In geometry, the operator $C_\al$ is the composition of  push-forward and pull-back defined by $G/B\to G/P_{\al}$, where $B$ is a Borel subgroup and $P_\al$ is the minimal parabolic subgroup determined by $\al\in \Sigma$.
\end{remark}

 Let $Q^F=S[\frac{1}{x_\al}|\al\in \Sigma^+]$. We define the {\it twisted formal group algebra} $Q^F_W=Q^F\rtimes_{R}R[W]$, that is, it is isomorphic to $Q^F\otimes_RR[W]$ as an $R$-module, and the multiplication is given by 
\[
u\otimes\de_w \cdot u'\otimes\de_{w'}=uw(u')\otimes\de_{ww'}, ~u,u'\in Q^F, w,w'\in W.
\]
Note that the product is not commutative and $Q^F_W$ is not a $Q^F$-algebra as the embedding $Q^F\to Q^F_W, u\mapsto u\de_e$ is not central. Here $e\in W$ is the identity element, and we will denote $1=1\otimes\de_e$. Let  $Q_\al=S[\frac{1}{x_\be}\mid \be\in \Sigma^+, \be\neq \al]$, and consequently $S=\cap_{\al\in \Sigma^+}Q_\al$. 

For each root $\al$, we define the {\it formal Demazure element} and the {\it formal push-pull element} by 
\[
X^F_\al:=\frac{1}{x_\al}-\frac{1}{x_{\al}}\de_{s_\al}, ~Y^F_\al:=\ka_\al-X^F_\al=\frac{1}{x_{-\al}}+\frac{1}{x_\al}\de_{s_\al}.
\]
We also define
\begin{equation}\label{eq:T}
T^F_\al=x_\ga X_\al+\de_\al\in Q_W^F,
\end{equation}
and call it a {\em Demazure-Lusztig element (operator)}. For simplicity, we will sometimes skip the superscript $F$, and use the following notations for short: $\de_\al=\de_{s_\al}$, $\de_{i}=\de_{\al_i}$, $X_i=X_{\al_i}$, $X_{-i}=X_{-\al_i}$,  $Y_i=Y_{\al_i}$, $Y_{-i}=Y_{-\al_i}$, $T_i=T_{\al_i}$ and $T_{-i}=T_{-\al_i}$.

The following lemma follows from direct computation.
\begin{lemma}\label{lem:XYTproperty}
The operators satisfying the following relations:
\begin{enumerate}
\item $\de_w X_\al \de_{w^{-1}}=X_{w(\al)}$, $\de_w Y_\al \de_{w^{-1}}=Y_{w(\al)}$ and $\de_wT_\al \de_{w^{-1}}=T_{w(\al)}$ for  $w\in W$;
\item $X_\al \de_\al=-X_\al, \de_\al X_\al=X_\al+\ka_\al\de_\al-\ka_\al.$
\item $X_\al^2=\ka_\al X_\al, Y_\al^2=\ka_\al Y_\al$;
\item $X_\al u -s_\al(u)X_\al=\Dem_\al(u)$, $Y_\al u-s_\al(u)Y_\al=\frac{-1}{\mu_\al}\Dem_\al(u)=\Dem_{-\al}(u)$ for $u\in Q^F$;
\item $T_\al u-s_\al(u)T_\al=x_\ga \Dem_\al(u)$ for $u\in Q^F$;
\item $(T_\al-1)(T_\al+1-x_\ga \ka_\al)=0$. Moreover, $T_\al^2=x_\ga \ka_\al T_\al+1-x_\ga \ka_\al$, $T_\al^{-1}=\frac{T-x_\ga \ka_\al}{1-x_\ga \ka_\al}$. In particular, if $\ka^F=0$, then $T_\al^2=1$,  and $T_\al^{-1}=T_\al$. If $\ka^F=1$, then $T_\al^2=x_\ga T_\al+1-x_\ga$ and $T^{-1}_\al=\frac{x_{-\ga}}{x_\ga}T_\al+x_{-\ga}$.
\end{enumerate}
\end{lemma}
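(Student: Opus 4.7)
The plan is to verify every assertion by a direct manipulation inside $Q^F_W$, using only the multiplication rule $u\delta_w\cdot u'\delta_{w'}=u\,w(u')\,\delta_{ww'}$, the definitions of $X_\alpha=\tfrac{1}{x_\alpha}(1-\delta_\alpha)$, $Y_\alpha=\kappa_\alpha-X_\alpha$, $T_\alpha=x_\gamma X_\alpha+\delta_\alpha$, $\kappa_\alpha=\tfrac{1}{x_\alpha}+\tfrac{1}{x_{-\alpha}}$, and the basic commutation $\delta_\alpha u=s_\alpha(u)\delta_\alpha$ for $u\in Q^F$. One subsidiary observation, needed for (5) and (6), is that $x_\gamma\in R\lbr\Gamma\rbr_F$ is $W$-invariant, and hence commutes with every $\delta_w$, i.e.\ is central in $Q^F_W$.

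I would take the six items in the order (1), (4), (2), (3), (5), (6). Item (1) is immediate from $\delta_w\tfrac{1}{x_\alpha}\delta_{w^{-1}}=\tfrac{1}{x_{w(\alpha)}}$ and $\delta_w\delta_{s_\alpha}\delta_{w^{-1}}=\delta_{s_{w(\alpha)}}$ applied termwise to each of $X_\alpha$, $Y_\alpha$, $T_\alpha$. For (4), I would expand $X_\alpha u-s_\alpha(u)X_\alpha$ and see the terms $\tfrac{s_\alpha(u)}{x_\alpha}\delta_\alpha$ cancel, leaving $\tfrac{u-s_\alpha(u)}{x_\alpha}=\Delta_\alpha(u)$; the $Y_\alpha$-statement then follows from $Y_\alpha=\kappa_\alpha-X_\alpha$ together with the identity $-\tfrac{1}{\mu_\alpha}\Delta_\alpha=\Delta_{-\alpha}$, which is a one-line rewriting using $\mu_\alpha=\tfrac{x_{-\alpha}}{-x_\alpha}$ and $s_{-\alpha}=s_\alpha$.

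Items (2) and (3) then drop out quickly. For (2), $X_\alpha\delta_\alpha=\tfrac{1}{x_\alpha}\delta_\alpha-\tfrac{1}{x_\alpha}=-X_\alpha$; the formula for $\delta_\alpha X_\alpha$ is obtained by multiplying the definition on the left by $\delta_\alpha$, using $s_\alpha(\tfrac{1}{x_\alpha})=\tfrac{1}{x_{-\alpha}}$, and rewriting the result in terms of $\kappa_\alpha$. For (3), I would expand $X_\alpha^2=X_\alpha\cdot\tfrac{1}{x_\alpha}(1-\delta_\alpha)$ and apply (4) with $u=\tfrac{1}{x_\alpha}$; the coefficients collapse to $\kappa_\alpha X_\alpha$ after one line of regrouping. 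The identity $Y_\alpha^2=\kappa_\alpha Y_\alpha$ follows by expanding $(\kappa_\alpha-X_\alpha)^2$, using $s_\alpha(\kappa_\alpha)=\kappa_\alpha$ together with the $X_\alpha$-case.

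Finally, (5) and (6) use centrality of $x_\gamma$. For (5),
\[
T_\alpha u-s_\alpha(u)T_\alpha=x_\gamma(X_\alpha u-s_\alpha(u)X_\alpha)+(\delta_\alpha u-s_\alpha(u)\delta_\alpha)=x_\gamma\Delta_\alpha(u).
\]
For the quadratic relation in (6), I would compute
\[
T_\alpha^2=x_\gamma^2 X_\alpha^2+x_\gamma(X_\alpha\delta_\alpha+\delta_\alpha X_\alpha)+1,
\]
substitute $X_\alpha^2=\kappa_\alpha X_\alpha$ and $X_\alpha\delta_\alpha+\delta_\alpha X_\alpha=\kappa_\alpha\delta_\alpha-\kappa_\alpha$ from (3) and (2), and regroup to $x_\gamma\kappa_\alpha T_\alpha+(1-x_\gamma\kappa_\alpha)$, which factorizes as $(T_\alpha-1)(T_\alpha+1-x_\gamma\kappa_\alpha)=0$. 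The formula $T_\alpha^{-1}=\tfrac{T_\alpha-x_\gamma\kappa_\alpha}{1-x_\gamma\kappa_\alpha}$ follows from $T_\alpha(T_\alpha-x_\gamma\kappa_\alpha)=1-x_\gamma\kappa_\alpha$, and the specializations at $\kappa^F=0$ and $\kappa^F=1$ are direct substitutions. I foresee no essential obstacle; only the non-commutativity between $Q^F$ and $R[W]$ and the central role of $x_\gamma$ need constant attention.
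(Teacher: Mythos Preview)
Your proposal is correct and follows the same approach as the paper: direct computation in $Q^F_W$ using only the multiplication rule and the definitions. The paper's proof in fact spells out only the single verification $X_\alpha^2=\kappa_\alpha X_\alpha$ by brute expansion and declares the rest ``similar''; your outline is more detailed and uses the earlier items (notably (2) and (4)) to streamline the later ones, but this is a matter of organization rather than a different method.
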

\begin{proof} We spell out the proof for $X_\al^2=\ka_\al X_\al$ only, as the rest are proved similarly. 
\[
X_\al ^2=(\frac{1}{x_\al}-\frac{1}{x_\al}\de_\al)(\frac{1}{x_\al}-\frac{1}{x_\al}\de_\al)=\frac{1}{x_\al^2}-\frac{1}{x_\al x_{-\al}}\de_\al-\frac{1}{x_\al^2}\de_\al+\frac{1}{x_\al x_{-\al}}=(\frac{1}{x_\al}+\frac{1}{x_{-\al}})(\frac{1}{x_\al}-\frac{1}{x_\al}\de_\al)=\ka_\al X_\al.
\]
\end{proof}
For any sequence $I=(i_1,...,i_l)$, we define 
\[
T_I=T_{i_1}T_{i_2}\cdots T_{i_l}.
\]
The operators $X_I,Y_I$ for any sequence $I=(i_1,...,i_l)$ are defined similarly. Let  $w\in W$ be any element and let $I_w=(i_1,...,i_l)$ be a reduced sequence of $w$, i.e., $w=s_{i_1}s_{i_2}\cdots s_{i_l}$ is a reduced decomposition. The operators $X_{I_w}, Y_{I_w}$ depend on the choice of $I_w$, not only on $w$ itself, unless $F$ is the additive formal group law or a multiplicative formal group law (\cite[Theorem 3.7]{BE}). Similar conclusion holds for $T_{I_w}$ (Remark \ref{rem:indIw}).

Define 
\begin{equation}\label{eq:tildec}
c_w=\prod_{\al\in \Sigma(w)}x_\al, \quad \tilde c_w=\prod_{\al\in \Sigma(w)}(x_\al-x_\ga).
\end{equation}
\begin{lemma}\label{lem:deltaT} Fix a set of reduced sequences $\{I_w\}_{w\in W}$, we have 
\[
T_{I_w}=\sum_{v\le w}b^T_{w,v}\de_v, ~\de_w=\sum_{v\le w}a^T_{w,v}T_{I_v}, ~b^T_{w,v}\in Q^F, ~a^T_{w,v}\in S[\frac{1}{\tilde c_{w_0}}],
\]
such that  $a_{w,w}^T=\frac{c_w}{\tilde c_w}$, $b^T_{w,w}=\frac{\tilde c_w}{c_w}$. Consequently, $\{T_{I_w}\}_{w\in W}$ is a set of linearly independent elements.
\end{lemma}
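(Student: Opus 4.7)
The plan is to establish the two expansions in sequence and deduce linear independence at the end.

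For the first expansion $T_{I_w}=\sum_{v\le w}b^T_{w,v}\de_v$, I would substitute the decomposition $T_{i_j}=\frac{x_\ga}{x_{i_j}}+\frac{x_{i_j}-x_\ga}{x_{i_j}}\de_{i_j}$ into the product $T_{I_w}=T_{i_1}\cdots T_{i_l}$ and expand, using the twist rule $\de_\al\cdot u=s_\al(u)\cdot\de_\al$ from the definition of $Q^F_W$ to collect all $\de$-factors on the right. The expansion is then indexed by subsets $J\subseteq\{1,\ldots,l\}$: the $J$-summand is supported on $\de_{v_J}$, where $v_J=\prod_{j\in J}s_{i_j}$ is taken in increasing order. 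By the subword property of the Bruhat order every $v_J\le w$, so $b^T_{w,v}\in Q^F$ vanishes unless $v\le w$. The top subset $J=\{1,\ldots,l\}$ produces the leading coefficient $\prod_{j=1}^l s_{i_1}\cdots s_{i_{j-1}}\bigl(\tfrac{x_{i_j}-x_\ga}{x_{i_j}}\bigr)=\prod_{\beta\in\Sigma(w)}\tfrac{x_\beta-x_\ga}{x_\beta}=\tilde c_w/c_w$, using the standard identification of $\Sigma(w)$ with $\{s_{i_1}\cdots s_{i_{j-1}}(\al_{i_j})\}_{j=1}^l$.

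For the second expansion, I would invert the transition matrix $B=(b^T_{w,v})$, which is upper-triangular under any linear refinement of Bruhat order, with diagonal entries $\tilde c_w/c_w$. Since $\tilde c_w$ divides $\tilde c_{w_0}$, these diagonals are units in $S[\tfrac{1}{\tilde c_{w_0}}]$ with inverse $c_w/\tilde c_w$, so matrix inversion yields $\de_w=\sum_{v\le w}a^T_{w,v}T_{I_v}$ with $a^T_{w,w}=c_w/\tilde c_w$ and a priori $a^T_{w,v}\in Q^F[\tfrac{1}{\tilde c_{w_0}}]$. To refine $a^T_{w,v}$ into $S[\tfrac{1}{\tilde c_{w_0}}]$, I would induct on $\ell(w)$. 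The case $\ell(w)\le 1$ follows from the explicit formula $\de_i=\frac{x_i}{x_i-x_\ga}T_i-\frac{x_\ga}{x_i-x_\ga}$, whose coefficients already lie in $S[\tfrac{1}{x_i-x_\ga}]\subseteq S[\tfrac{1}{\tilde c_{w_0}}]$. For $\ell(w)\ge 2$, write $\de_w=\de_{i_1}\de_{w''}$ with $w''=s_{i_2}\cdots s_{i_l}$, substitute the inductive $T$-expansion of $\de_{w''}$, push the outer $\de_{i_1}$ across the scalars via $\de_{i_1}\cdot u=s_{i_1}(u)\cdot\de_{i_1}$, and then use the commutation $T_{i_1}\cdot u=s_{i_1}(u)\cdot T_{i_1}+x_\ga\Dem_{i_1}(u)$ together with the quadratic relation $T_{i_1}^2=x_\ga\ka_{i_1}T_{i_1}+1-x_\ga\ka_{i_1}$ (whose coefficients lie in $S$) to rewrite the result in the chosen basis.

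The main obstacle is the bookkeeping in this inductive step: when simplifying $T_{i_1}\cdot T_{I_{v''}}$, the concatenation $(i_1,I_{v''})$ generally does not agree with the chosen reduced sequence $I_u$ for $u=s_{i_1}v''$, and may even fail to be reduced. One must compare $T_{(i_1,I_{v''})}$ with $T_{I_u}$, or reduce via the quadratic relation, producing corrections supported on $\de_v$ for $v<u$; the inductive hypothesis then rewrites these in the basis $\{T_{I_{v'}}\}$. The delicate point is to verify that all intermediate coefficients remain in $S[\tfrac{1}{\tilde c_{w_0}}]$, rather than leaking into $Q^F[\tfrac{1}{\tilde c_{w_0}}]$; this ultimately rests on the cancellation between the $x_\al$-denominators introduced by the first expansion of $T_\al$ and the $x_\al$-numerators furnished by $\de_\al=\frac{x_\al T_\al-x_\ga}{x_\al-x_\ga}$. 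Finally, linear independence of $\{T_{I_w}\}_{w\in W}$ is automatic: since $\{\de_w\}$ is a left $Q^F$-basis of $Q^F_W$ by definition of the twisted formal group algebra and $B$ is invertible over $Q^F[\tfrac{1}{\tilde c_{w_0}}]$, the set $\{T_{I_w}\}$ is a left $Q^F[\tfrac{1}{\tilde c_{w_0}}]$-basis of $Q^F_W\otimes_{Q^F}Q^F[\tfrac{1}{\tilde c_{w_0}}]$, hence linearly independent over any subring.
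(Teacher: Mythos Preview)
Your approach is essentially the same as the paper's, which simply defers to \cite[Lemma~5.4 and Corollary~5.6]{CZZ1}; in fact you have supplied considerably more detail than the paper does. The first expansion via direct substitution of $T_{i_j}=\frac{x_\ga}{x_{i_j}}+\frac{x_{i_j}-x_\ga}{x_{i_j}}\de_{i_j}$, the identification of the leading coefficient $\tilde c_w/c_w$ via $\Sigma(w)$, and the deduction of linear independence from the triangular change-of-basis are all exactly what the referenced argument in \cite{CZZ1} does for the $X_\al$-operators, transported to $T_\al$.

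There is one point where your sketch is incomplete, and it is precisely the ``delicate point'' you flag. In your inductive step for the second expansion, when you write the correction $T_{(i_1,I_{v''})}-T_{I_u}$ as $\sum_{v<u}b'_v\de_v$ and then invoke induction to convert the $\de_v$'s back to $T_{I_{v'}}$'s, the coefficients $b'_v$ lie only in $Q^F$, so a priori you land in $Q^F[\tfrac{1}{\tilde c_{w_0}}]$ rather than $S[\tfrac{1}{\tilde c_{w_0}}]$; your appeal to an unspecified ``cancellation'' does not close this. A cleaner way, paralleling \cite{CZZ1}, is to avoid comparing different reduced sequences altogether: expand $\de_w=\prod_j\bigl(\tfrac{x_{i_j}}{x_{i_j}-x_\ga}T_{i_j}-\tfrac{x_\ga}{x_{i_j}-x_\ga}\bigr)$ directly and push scalars left using $T_\al u=s_\al(u)T_\al+x_\ga\Dem_\al(u)$, tracking that in a \emph{reduced} word the roots $s_{i_1}\cdots s_{i_{j-1}}(\al_{i_j})$ that appear in the denominators are exactly the elements of $\Sigma(w)\subseteq\Sigma^+$, so the relevant $s_\al$ and $\Dem_\al$ never send the denominators outside of divisors of $\tilde c_{w_0}$. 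This is the bookkeeping that makes the analogue of \cite[Corollary~5.6]{CZZ1} go through; your outline is otherwise on target.
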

\begin{proof} This follows similarly as in Lemma 5.4 and Corollary 5.6 of \cite{CZZ1}.
\end{proof}
\begin{lemma} \label{lem:braid} If in $W$ the elements $s_i$ and $s_j$ satisfy $(s_is_j)^{m_{ij}}=1$, then the operators $T_i$ and $T_j$ on $S$ satisfy the following relation
\[
\underbrace{T_iT_jT_i\cdots}_{m_{ij} \text{ times }}-\underbrace{T_jT_iT_j\cdots}_{m_{ij} \text{ times} }=\sum_{\ell(v)\le m_{ij}-2} u_v^{ij}T_{I_v}, ~u^{ij}_v\in Q^F.
\]
\end{lemma}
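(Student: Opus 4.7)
The plan is to expand both products in the $Q^F$-linearly independent family $\{\delta_v\}_{v \in W} \subset Q^F_W$, show by explicit coefficient comparison that the difference is supported on $v$ with $\ell(v) \le m_{ij} - 2$, and then convert back to the $\{T_{I_v}\}$-expansion using the triangular change-of-basis from Lemma~\ref{lem:deltaT}. Writing
\[
T_\alpha = \frac{x_\gamma}{x_\alpha} + \frac{x_\alpha - x_\gamma}{x_\alpha}\,\delta_\alpha,
\]
I would expand $T_I = T_{i_1} \cdots T_{i_m}$ with $m = m_{ij}$ by choosing at each position either the scalar part or the $\delta$-part and then moving the $\delta$'s to the right via $\delta_l\, u = s_l(u)\,\delta_l$; each subset $S \subseteq \{1,\ldots,m\}$ contributes a term of the form $u_S\,\delta_{v_S}$, where $v_S$ is the ordered sub-word product and $u_S \in Q^F$ is obtained by pushing the selected scalars past the $\delta$'s.

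Take $I = (i,j,i,\ldots)$ and $I' = (j,i,j,\ldots)$, both of length $m$, the two reduced words for the dihedral longest element $w_{ij}$. I would then establish two cancellations in $T_I - T_{I'}$. First, at the top length $m$: the only $S$ with $\ell(v_S) = m$ is $S = \{1,\ldots,m\}$, and the resulting coefficient equals $\tilde c_{w_{ij}}/c_{w_{ij}}$ (using the identity $\{s_{i_1}\cdots s_{i_{l-1}}(\alpha_{i_l})\}_{l=1}^m = \Sigma(w_{ij})$); this is the diagonal entry $b^T_{w_{ij},w_{ij}}$ of Lemma~\ref{lem:deltaT} and is independent of the choice of reduced word. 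Second, at length $m-1$: the dihedral subgroup contains exactly two such elements, $v_1 = \underbrace{s_i s_j \cdots}_{m-1}$ and $v_2 = \underbrace{s_j s_i \cdots}_{m-1}$, and for each only one position (the last in $I$, the first in $I'$ for $v_1$, and symmetrically for $v_2$) may be omitted so as to yield a reduced sub-word. Evaluating these boundary coefficients gives $(\tilde c_{v_1}/c_{v_1}) \cdot x_\gamma/x_{v_1(\alpha_{i_m})}$ in $T_I$ and $(x_\gamma/x_{\alpha_{j_1}}) \cdot (\tilde c_{v_1}/c_{v_1})$ in $T_{I'}$, and these agree because $v_1(\alpha_{i_m}) = \alpha_{j_1}$; this identity follows from the dihedral fact that $w_{ij}(\alpha_k) = -\alpha_{*(k)}$ with $*$ the identity if $m$ is even and the swap of $\{i,j\}$ if $m$ is odd, together with $v_1 = w_{ij}\, s_{i_m}$. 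The symmetric calculation handles $\delta_{v_2}$.

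Consequently $T_I - T_{I'} = \sum_{\ell(v) \le m_{ij}-2} c_v\,\delta_v$ with $c_v \in Q^F$. Substituting $\delta_v = \sum_{u \le v} a^T_{v,u} T_{I_u}$ from Lemma~\ref{lem:deltaT}, and noting that every $u$ that appears satisfies $\ell(u) \le \ell(v) \le m_{ij}-2$, produces the desired formula. The coefficients end up in $Q^F$ because $T_I, T_{I'} \in Q^F_W$, every $\delta_v$-coefficient above lies in $Q^F$, and the linear independence of $\{T_{I_v}\}_{v \in W}$ from Lemma~\ref{lem:deltaT} ensures the $T_{I_v}$-expansion is unique once it exists over $Q^F$.

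The hardest step will be the length-$(m-1)$ cancellation (ii), where one must carry out the two subword computations carefully and account for every Weyl conjugation introduced when commuting scalars past $\delta$'s. Conceptually, however, the entire cancellation reduces to the statement that the longest dihedral element acts as $-1$ on the dihedral root plane (up to the swap $\alpha_i \leftrightarrow \alpha_j$ when $m$ is odd), which is precisely what converts the omit-last computation on the $I$-side into the omit-first computation on the $I'$-side.
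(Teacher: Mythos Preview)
Your expansion in the $\delta$-basis and the two cancellations (at lengths $m$ and $m-1$) are correct and match the paper's intended approach; the identity $v_1(\alpha_{i_m})=\alpha_j$ via $w_{ij}(\alpha_k)=-\alpha_{*(k)}$ is exactly the right dihedral fact. This is the argument behind \cite[Proposition~6.8(a)]{HMSZ}, to which the paper defers.

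There is, however, a genuine gap in your final step. Having obtained
\[
T_I-T_{I'}=\sum_{\ell(v)\le m-2}e_v\,\delta_v,\qquad e_v\in Q^F,
\]
you substitute $\delta_v=\sum_{u\le v}a^T_{v,u}T_{I_u}$ from Lemma~\ref{lem:deltaT}. But that lemma only gives $a^T_{v,u}\in S[\tfrac{1}{\tilde c_{w_0}}]$, not $a^T_{v,u}\in Q^F$: the diagonal entries are $a^T_{v,v}=c_v/\tilde c_v$, and $\tilde c_v=\prod_{\alpha\in\Sigma(v)}(x_\alpha-x_\gamma)$ is \emph{not} invertible in $Q^F$. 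Equivalently, $\{T_{I_w}\}_w$ is $Q^F$-linearly independent but does \emph{not} span $Q^F_W$ (the transition matrix to $\{\delta_w\}$ has determinant $\prod_w \tilde c_w/c_w$, a nonunit). Your uniqueness remark is fine, but it does not supply existence of a $Q^F$-expansion.

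The fix is to show that each $\delta_v$-coefficient already carries a factor of $\tilde c_v$. Concretely, for any subset $S$ with $v_S=v$, the contribution to the $\delta_v$-coefficient has numerator $\prod_{l\in S}(x_{\beta_l}-x_\gamma)$ with $\beta_l=w_{<l}(\alpha_{i_l})$; the standard Coxeter fact that the root sequence of any (not necessarily reduced) word for $v$ equals $\Sigma(v)$ together with cancelling pairs $\{\beta,-\beta\}$ gives $\tilde c_v\mid \prod_{l\in S}(x_{\beta_l}-x_\gamma)$ in $S$. Hence every $e_v$, and likewise every $b^T_{w,v}$, lies in $\tilde c_v\cdot Q^F$. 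Now peel off from the top: for $\ell(v_0)=m-2$, the coefficient of $T_{I_{v_0}}$ is $e_{v_0}c_{v_0}/\tilde c_{v_0}\in Q^F$; subtracting $\tfrac{e_{v_0}c_{v_0}}{\tilde c_{v_0}}T_{I_{v_0}}$ preserves the property ``$\delta_v$-coefficient $\in \tilde c_v\cdot Q^F$'' because $b^T_{v_0,v}\in\tilde c_v\cdot Q^F$ as well. Iterating yields $u^{ij}_v\in Q^F$ as claimed.
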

\begin{proof} The proof is similar to that of \cite[Proposition 6.8.(a)]{HMSZ}.
\end{proof}
For any $i\neq j$, denote $x_{i+j}=x_{\al_i+\al_j}$, and define 
\[
\ka_{ij}=\frac{1}{x_{i+j}}(\frac{1}{x_j}-\frac{1}{x_{-i}})-\frac{1}{x_ix_j}.
\]
By \cite[Lemma 6.7]{HMSZ}, the elements $\ka_{ij}$ are $S$. Computation shows the following theorem, which gives an explicit expression of how the braid relation is deformed according to the formal group law.

\begin{theorem} \label{thm:braid}
With notations as above, the operators $T_i$ and $T_j$ on $S$ satisfy the following relations
\begin{enumerate}
\item $T_iT_j=T_jT_i$, if $(s_is_j)^2=1$ for $s_i$ and $s_j\in W$;
\item $T_iT_jT_i-T_jT_iT_j=x_\ga^2\left(\ka_{ji}T_j-\ka_{ij}T_i+\frac{\ka_j-\ka_i}{x_{i+j}}\right)$, if $(s_is_j)^3=1$ for $s_i$ and $s_j\in W$. 
\item $\underbrace{T_iT_jT_i\cdots}_{m_{ij} \text{ times }}-\underbrace{T_jT_iT_j\cdots}_{m_{ij} \text{ times} }=\sum_{\ell(v)\le m_{ij}-2} u_v^{ij}T_{I_v}$ with the coefficients $u_v^{ij}$ for $m_{ij}=2,3,4,6$ belong to $S$. 
\end{enumerate}
\end{theorem}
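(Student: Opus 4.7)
The strategy is to leverage the identity $T_\al=x_\ga X_\al+\de_\al$ to translate each braid relation for the Demazure--Lusztig operators into identities among the formal Demazure elements $X_\al$ and the reflections $\de_\al$ inside the twisted formal group algebra $Q_W^F$. Since each claimed relation involves only the rank-$2$ subsystem generated by $\al_i,\al_j$, it suffices to verify it in the relevant rank-$2$ cases ($A_1\times A_1$, $A_2$, $B_2/C_2$, $G_2$).

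For (1), the condition $(s_is_j)^2=1$ forces $\al_i\perp\al_j$, so Lemma~\ref{lem:XYTproperty}(1) together with $s_i(\al_j)=\al_j$ gives $\de_iX_j=X_j\de_i$, $\de_jX_i=X_i\de_j$, $X_iX_j=X_jX_i$, and $\de_i\de_j=\de_j\de_i$. Expanding the products $T_iT_j=(x_\ga X_i+\de_i)(x_\ga X_j+\de_j)$ and $T_jT_i$ and matching monomials immediately gives the commutation.

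For (2), I would expand
\[
T_iT_jT_i-T_jT_iT_j=\sum_{k=0}^{3}x_\ga^k\, M_k,
\]
where $M_k$ collects the monomials with $k$ copies of $X_\bullet$ and $3-k$ copies of $\de_\bullet$. The piece $M_0=\de_i\de_j\de_i-\de_j\de_i\de_j$ vanishes by the classical braid relation. The piece $M_3=X_iX_jX_i-X_jX_iX_j$ is the formal braid identity for the Demazure elements in type $A_2$, and by \cite{HMSZ} (see also \cite{CZZ1}) equals $\ka_{ji}X_j-\ka_{ij}X_i$. The mixed terms $M_1,M_2$ are reorganized via Lemma~\ref{lem:XYTproperty}(1)(4) and the rank-$2$ relation $s_i(\al_j)=\al_i+\al_j=s_j(\al_i)$; a careful collection (using $\ka_\al=\tfrac{1}{x_\al}+\tfrac{1}{x_{-\al}}$ and the definition of $\ka_{ij}$) shows $M_1=0$ and $M_2=\ka_{ji}\de_j-\ka_{ij}\de_i+\tfrac{\ka_j-\ka_i}{x_{i+j}}$. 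Regrouping with $T_\al=x_\ga X_\al+\de_\al$ once more produces the claimed right-hand side.

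For (3), the existence of an expansion with coefficients in $Q^F$ and the length bound $\ell(v)\le m_{ij}-2$ is Lemma~\ref{lem:braid}, while uniqueness of the coefficients follows from the linear independence of $\{T_{I_v}\}_{v\in W}$ in Lemma~\ref{lem:deltaT}. The remaining content is the $S$-integrality of each $u_v^{ij}$, which for $m_{ij}=2,3$ is already supplied by (1) and (2). For $m_{ij}=4$ (types $B_2/C_2$) and $m_{ij}=6$ (type $G_2$), the same expansion strategy applies inside the rank-$2$ subsystem: write the alternating product as a polynomial in $x_\ga$ with operator coefficients, invoke the known braid deformations for the $X_\al$ and the classical braid identity for the $\de_w$, and verify directly that after reassembling in the basis $\{T_{I_v}\}$ every coefficient lies in $S$. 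The principal obstacle is the step in (2): tracking and simplifying the nine noncommuting monomials in $M_2$ and recognizing the output as an $S$-linear combination of $T_i,T_j,1$. The $G_2$ computation in (3) is the same obstacle amplified---twelve factors instead of six---but the structural argument is identical.
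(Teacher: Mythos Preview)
Your treatment of (1) and (2) by direct expansion of $T_\al=x_\ga X_\al+\de_\al$ matches the paper, which simply asserts that ``computation shows'' these identities.

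For (3), however, the paper takes a genuinely different route. You propose to redo the rank-$2$ expansion in the $B_2/C_2$ and $G_2$ cases and check $S$-integrality of the coefficients by hand; this is in principle feasible but extremely laborious (you acknowledge the $G_2$ case as ``the same obstacle amplified''). The paper instead defers the integrality to Corollary~\ref{cor:struct2}, which says that $\{T_{I_w}\}_{w\in W}$ is a basis of $\HF$ as a left $S$-module. Since the braid-difference on the left of (3) manifestly lies in $\HF$, and Lemma~\ref{lem:braid} already gives its expansion $\sum u_v^{ij}T_{I_v}$ with $u_v^{ij}\in Q^F$, the coefficients are forced to lie in $S$ by the basis property---no case-by-case computation required. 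Note that Corollary~\ref{cor:struct2} is itself deduced from the structure theorem (Theorem~\ref{thm:struct}), so the paper's argument for (3) has a forward reference and is not self-contained at this point in the text. Your approach avoids this circularity-in-exposition at the cost of substantial computation; the paper's approach is cleaner but relies on the structure theorem proved later.
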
 

Claim (3) follows from Corollary \ref{cor:struct2} below.

It follows from \cite[p. 809]{BE} that $\kappa_{ij}=0$ if and only if $F(x,y)=x+y-\beta xy$ for some $\beta\in R$, in which case we also have $\ka_i=\ka_j$ (see Example \ref{ex:fgl}.(2)). In other words, in this case the braid relations  $T_iT_jT_i=T_jT_iT_j$ hold.
\begin{remark} \label{rem:indIw} Direct computation shows that the coefficients $u_v^{ij}=0$ for all $v\in W$ and $i,j\in [n]$ if and only if $F(x,y)=x+y-\beta xy$ for some $\beta\in R$. This generalizes \cite[p. 809]{BE}. In this case, the definition of $T_{I_w}$ does not depend on the choice of $I_w$.
\end{remark}

\subsection{Formal affine Hecke algebra}
Now we give the definition of the formal affine Hecke algebra associated to any formal group law $(R,F)$.

\begin{definition}Define the {\it formal affine Demazure algebra} $\DF$ to be the $R$-subalgebra of $Q^F_W$ generated by $S$ and $X_i, i\in [n]$. Define the {\it formal affine Hecke algebra} $\HF$ to be the $R$-subalgebra of $Q^F_W$ generated by $S$ and $T_i,i\in [n]$.
\end{definition}
\begin{remark}
It follows from Theorem~\ref{thm:struct}~(\ref{prop:structDem}) below that the definition of $\DF$ does not depend on the choice of simple roots. As a ring itself, $\HF$  does not either, but for different choices of simple roots $\Phi$, the embeddings of $\HF$ into $Q_W^F$  are different. As can be easily seen that  $\de_w\not\in \HF$,  and hence $T_\al\in \HF$ if and only if $\al\in \Phi$. For different choices of $\Phi$, the embeddings are related by the Weyl group action. Note also that our definition of $T_\al$ and $\HF$ is different from the corresponding ones in \cite{HMSZ}. 
\end{remark}
\begin{remark}
The $S$-dual of the formal affine Demazure algebra is the algebraic replacement of the $T$-equivariant algebraic oriented cohomology of flag varieties \cite{KK86}, \cite{KK90} and \cite{CZZ3}. On the other hand, for additive or multiplicative formal group laws, the formal affine Demazure algebra gives the classical affine nil-Hecke algebra and affine $0$-Hecke algebra, respectively. See \cite{HMSZ} for the details. 
\end{remark}

\begin{remark}\label{rmk:gen_J}
For any simple root $\alpha\in\Phi$, let $J_\alpha^F=(x_{-\al}-x_\ga)Y_{-\al}=T_{-\alpha}-\mu_{\al}-x_\ga \ka_\al\in \HF$. If $I_v$ is a reduced sequence of $v$, we can define $J^F_{I_v}$ correspondingly. Then clearly $\HF$ is also generated by $S$ and $J^F_\alpha, \al\in \Phi$.  This set of generators are used when relating $\HF$ with the convolution algebra $A_{G\times \Gm}(Z)$ in Section \ref{sec:conv_Hecke}.
\end{remark}

\section{The structure theorem of the formal affine Hecke algebra}\label{sec: struc_res}
In this section we prove the structure theorem of the formal affine Hecke algebra and some of the basic properties. We also show that our construction recovers the residue construction of affine Hecke algebras due to Ginzburg--Kapranov--Vasserot in \cite{GKV97}.

\subsection{The structure theorem}
The ring $Q_W^F$ acts on $Q^F$ by 
\[
(u\de_w)\cdot u'=uw(u'), ~ u,u'\in Q^F, w\in W.
\]
We have
\[
X_\al\cdot u=\frac{u-s_\al(u)}{x_\al}=\Dem_\al(u), ~Y_\al\cdot u=\frac{u}{x_{-\al}}+\frac{s_\al(u)}{x_\al}=C_\al(u), ~T_\al\cdot u=x_\ga \Dem_\al(u)+s_\al(u), ~u\in Q^F,
\]
so $X_\al\cdot S\subseteq S, Y_\al\cdot S\subseteq S$ and $ T_\al\cdot S\subseteq S$. Indeed, we have 
\begin{theorem}[The structure theorem]\label{thm:struct} Under  Assumption \ref{assump:strong}, we have
\begin{enumerate}
\item\label{prop:structDem} $\DF=\{z\in Q_W^F|z\cdot S\subseteq S\}$;
\item \label{thm:structHecke} 
$
\HF=\{z\in \sum_{w\in W}u_w T_{I_w}|u_w\in Q^F \text{ and }z\cdot S\subseteq S\}.
$ 
\end{enumerate}
\end{theorem}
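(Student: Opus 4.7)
The plan is to prove each part by establishing both inclusions. The set $\{z \in Q_W^F : z \cdot S \subseteq S\}$ is clearly an $R$-subalgebra of $Q_W^F$, so the ``easy'' containments $\DF \subseteq \{\cdot\}$ and $\HF \subseteq \{\cdot\}$ reduce to checking the generators. For $S$ this is trivial; for the simple Demazure element $X_\al$ the formula $X_\al \cdot u = \Dem_\al(u)$ lies in $S$ because $x_\al$ divides $u - s_\al(u)$ in $R\lbr \La\rbr_F$ (a standard divisibility in the formal group algebra); and $T_\al \cdot u = x_\ga \Dem_\al(u) + s_\al(u) \in S$ since both summands lie in $S$. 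For part (2) I also need the slightly separate statement that each element of $\HF$ admits an expansion $\sum_w u_w T_{I_w}$ with $u_w \in Q^F$: this follows by induction on length from the deformed braid relations of Lemma~\ref{lem:braid}, since any word in the $T_i$'s and elements of $S$ can be reduced to such a sum.

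For the reverse inclusion in (2), suppose $z = \sum_{w} u_w T_{I_w}$ with $u_w \in Q^F$ and $z \cdot S \subseteq S$. By Lemma~\ref{lem:deltaT}, $\{T_{I_w}\}_{w\in W}$ is $Q^F$-linearly independent in $Q_W^F$, with leading term $b^T_{w,w} = \tilde{c}_w/c_w$ in the $\delta$-basis expansion. The strategy is to show by descending induction on the Bruhat order that each $u_w$ actually lies in $S$, which places $z$ in the subalgebra generated by $S$ and the $T_i$'s, i.e.\ in $\HF$. Let $w_1$ be maximal with $u_{w_1} \neq 0$. Under Assumption~\ref{assump:strong}, one constructs a family of test elements $\{f_v\}_{v \in W} \subset S$ such that the matrix $(T_{I_w}\cdot f_v)_{w,v \leq w_1}$ is Bruhat-triangular with diagonal entries that are units after multiplying by $c_{w_1}/\tilde{c}_{w_1}$. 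Applying $z \cdot f_{w_1} \in S$ together with this triangularity forces $u_{w_1} \in S$; subtracting $u_{w_1}T_{I_{w_1}}$ (now an element of $\HF$, hence still $S$-preserving) strictly shortens the support of $z$, and induction concludes.

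Part (1) is proved by exactly the same scheme with $\{X_{I_w}\}$ in place of $\{T_{I_w}\}$; the leading coefficient of $X_{I_w}$ in the $\delta$-basis has the simpler form $\pm 1/c_w$, so the coefficient extraction is cleaner. Indeed, the $\DF$-statement is the direct generalization of the structure theorem in \cite{CZZ1}, and our argument differs only in that the coefficient ring $S$ now includes the extra variable $x_\ga$, which is harmless in the divisibility arguments because $x_\al$ and $x_\ga$ are coprime in $R\lbr \Ga\oplus\La\rbr_F$. The main obstacle is the construction of the test family $\{f_v\}$ together with the verification of triangularity of the pairing matrix: this is where Assumption~\ref{assump:strong} is essential, via the analog of \cite[Lemma~3.5]{CZZ1}, which exploits regularity of the torsion index $\ttt$ to produce enough fundamental-weight-type elements in $S$ whose Weyl-group transforms are sufficiently separated for the matrix to be inverted after the appropriate localization.
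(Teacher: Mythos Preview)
Your overall architecture is correct, and the easy inclusions are fine. The gap is in the hard inclusion for part~(2), precisely at the step you describe most vaguely.

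The paper does not run a descending Bruhat induction inside $S$. Instead it passes to the auxiliary ring $\tilde S = R\lbr\Ga\rbr_F[\tfrac{1}{\ttt x_\ga}]\lbr\La\rbr_F$, produces a single element $u_0$ (from \cite[\S5.2]{CPZ}) with $\Dem_{I_0}(u_0)\equiv \ttt \bmod \tilde\IF$, and shows that the full pairing matrix $(\tau_{I_w}\tau_{I_v}(u_0))_{w,v\in W}$ is invertible over $\tilde S$: the invertibility comes from the \emph{degree} filtration $\tilde\IF^\bullet$ (each $\tau_I$ drops degree by $|I|$, strictly if $I$ is non-reduced), so that modulo $\tilde\IF$ the matrix has entries $x_\ga^{\ell(w_0)}\ttt$ exactly on the anti-diagonal $v=w_0w^{-1}$ and zero elsewhere. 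One then solves the linear system $\sum_w u_w\,\tau_{I_w}\tau_{I_v}(u_0)=z\cdot\tau_{I_v}(u_0)$ to get $u_w\in\tilde S$ for all $w$ simultaneously, and finally invokes $\tilde S\cap Q^F = S$ (this is \cite[Lemmas~3.3,~3.5]{CZZ1}) to descend. That intersection step is where Assumption~\ref{assump:strong} is actually spent, and you never mention it; ``after the appropriate localization'' is not enough, because the whole point is how to come back.

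Two specific issues with your formulation. First, ``diagonal entries that are units after multiplying by $c_{w_1}/\tilde c_{w_1}$'' conflates the change-of-basis coefficients $b^T_{w,w}=\tilde c_w/c_w$ of Lemma~\ref{lem:deltaT} with the pairing-matrix entries; the obstruction to inverting the pairing matrix over $S$ is not $\tilde c_w$ but powers of $x_\ga$ and of $\ttt$, which is why one localizes at $\ttt x_\ga$ rather than at $\tilde c_{w_0}$. Second, the triangularity you need is with respect to the $\IF$-adic filtration, not the Bruhat order on $W$; your ``Bruhat-triangular'' matrix of values $T_{I_w}\cdot f_v$ is not obviously triangular for any natural choice of $f_v\in S$. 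As written, the step ``forces $u_{w_1}\in S$'' is therefore unjustified: you only get $u_{w_1}$ in a localization, and the missing ingredient is the intersection $\tilde S\cap Q^F=S$.
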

We only show the proof of (\ref{thm:structHecke}), as (\ref{prop:structDem}) is proved in \cite[Proposition~6.2]{CZZ1}.
\begin{proof}We follow the method of \cite[\S4]{Zh13}. It suffices to show that the right hand side is contained in  $\HF$.  Firstly, we introduce some notations. Let $\tilde S=R\lbr\Ga\rbr_F[\frac{1}{\ttt x_\ga}]\lbr\La\rbr_F$, then $x_\ga-x_\al$ is invertible in $\tilde S$. We similarly define $\tilde{Q}^F=\tilde S[\frac{1}{x_\al}|\al\in \Sigma]\supsetneq Q^F$ and $\tilde{Q}^F_W=\tilde{Q}^F\rtimes_{R}R[W]$. Correspondingly, we define $\tilde X_\al$ and $\tilde T_\al$ in $\tilde{Q}^F_W$ similar as $X_\al$ and $T_\al$ in $Q^F_W$. Also, we define the operator 
\[
\tau_i:\tilde S\to \tilde S, ~u\mapsto x_\ga \Dem_i(u)+s_i(u), ~i\in [n].
\]
Let $\tilde{\IF}$ be the kernel of the augmentation map $\ep:\tilde S\to R\lbr\Ga\rbr_F[\frac{1}{\ttt x_\ga}]$ sending $ x_\la\mapsto 0$ for any $\la\in \La$, and let $\Gr \tilde S=\oplus_{j\ge 0}\tilde{\IF}^j/\tilde{\IF}^{j+1}$.  By \cite[Lemma 5.3]{CPZ}, we have $\Dem_i(\tilde{\IF}^j)\subseteq \tilde{\IF}^{j-1}$ and the operator $\Gr\Dem_i:\Gr\tilde S\to \Gr\tilde S$ is of degree $-1$. Since $\de_i(\tilde{\IF}^j)=\tilde{\IF}^j$, we also have $\tau_i(\tilde{\IF}^j)\subseteq \tilde{\IF}^{j-1}$ for any $j$. So $\tau_i$ induces an operator $\Gr\tau_i$ on $\Gr \tilde S$ of degree $-1$. Then similar as \cite[Lemma 4.7]{Zh13}, we have $\Gr\tau_I=x_\ga^{|I|}\Gr\Dem_I$. Moreover, $\tau_I(\tilde{\IF}^j)\subseteq \tilde{\IF}^{j-|I|}$. If $I$ is not reduced, then $\tau_I(\tilde{\IF}^j)\subseteq \tilde{\IF}^{j-|I|+1}$. 

We need the following Lemma. Let $w_0\in W$ be the longest element, and $I_0$ be a reduced sequence of $w_0$. 
\begin{lemma} \label{lem:invmat}There exists an element $u_0\in \tilde{\IF}^{\ell(w_0)}$ such that $\Dem_{I_0}(u_0)\equiv \ttt\mod \tilde{\IF}$. Moreover, if $|I|\le \ell(w_0)$, then 
\begin{equation}
\ep\tau_I(u_0)=\left\{\begin{array}{ll}x_\ga^{\ell(w_0)}\ttt, & \text{if }I \text{ is reduced and } |I|=\ell(w_0),\\
0 , & \text{otherwise.}\end{array}\right.
\end{equation}
Finally, the matrix $(\tau_{I_v}\tau_{I_w}(u_0))_{(v,w)\in W\times W}$ is invertible in $\tilde S.$
\end{lemma}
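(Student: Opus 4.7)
The plan is to address the three assertions in sequence. For claim~(1), under Assumption~\ref{assump:strong} the torsion index $\ttt$ is regular in $R$, which is precisely the hypothesis needed to invoke \cite[Lemma~3.5]{CZZ1} to produce $u_0 \in \tilde{\IF}^{\ell(w_0)}$ satisfying $\Dem_{I_0}(u_0) \equiv \ttt \pmod{\tilde{\IF}}$; this is the standard construction of a top class whose push-forward witnesses the torsion index.

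For claim~(2), the idea is to exploit the fact that the formal group law induced on $\Gr \tilde S \cong S^*_A(\La)$ (with $A = R\lbr\Ga\rbr_F[\tfrac{1}{\ttt x_\ga}]$) is the additive one. The graded operators $\Gr\Dem_i$ are therefore the classical BGG--Demazure operators, for which $\Gr \Dem_i^2 = 0$ and the full braid relations hold; hence $\Gr \Dem_I = 0$ whenever $I$ is not a reduced sequence. Combined with the identity $\Gr\tau_I = x_\ga^{|I|}\Gr\Dem_I$ established in the proof sketch, this yields $\tau_I(u_0) \in \tilde{\IF}$ for every $|I| \le \ell(w_0)$ except when $|I| = \ell(w_0)$ and $I$ is reduced (in which case $I$ is automatically a reduced sequence for $w_0$). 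In that exceptional case,
\[
\tau_I(u_0) \equiv x_\ga^{\ell(w_0)} \Dem_I(u_0) \equiv x_\ga^{\ell(w_0)} \Dem_{I_0}(u_0) \equiv x_\ga^{\ell(w_0)} \ttt \pmod{\tilde{\IF}},
\]
where the middle congruence uses the braid-invariance of the graded Demazure operator attached to a reduced expression of $w_0$.

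For claim~(3), I would permute the columns of the matrix by the involution $\sigma: W \to W$, $v \mapsto v^{-1}w_0$, setting $M'_{v,u} := \tau_{I_v}\tau_{I_{\sigma(u)}}(u_0) = \tau_{I_v \cdot I_{u^{-1}w_0}}(u_0)$, and order $W$ by increasing length. When $\ell(u) > \ell(v)$, the concatenated sequence has length $\ell(v) + \ell(w_0) - \ell(u) < \ell(w_0)$, so $\ep(M'_{v,u}) = 0$ by~(2); when $\ell(u) = \ell(v)$ with $u \ne v$, the concatenation has length $\ell(w_0)$ but represents $vu^{-1}w_0 \ne w_0$, hence is non-reduced, so again $\ep(M'_{v,u}) = 0$ by~(2); on the diagonal $u = v$, the concatenation is a reduced sequence for $w_0$, so $\ep(M'_{v,v}) = x_\ga^{\ell(w_0)}\ttt$. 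Thus $\ep(M')$ is lower triangular with diagonal entries $x_\ga^{\ell(w_0)}\ttt$, giving $\det \ep(M') = \pm (x_\ga^{\ell(w_0)}\ttt)^{|W|}$, a unit in $A$. Since $\tilde S = A\lbr \La \rbr_F$ is a power-series ring over $A$ and an element is a unit iff its $\ep$-image is, $\det M'$ is a unit in $\tilde S$ and the original matrix is invertible.

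The main conceptual hurdle is the graded collapse used in claim~(2), specifically the vanishing $\Gr\Dem_I = 0$ for non-reduced $I$; everything in claim~(3) rests on this, and it is precisely the trivialization of the formal group law in the associated graded (to the additive law) that allows the classical BGG identities to take over and kill all the off-diagonal contributions.
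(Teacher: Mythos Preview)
Your argument is correct and is essentially the one the paper defers to \cite[\S5.2, Lemma~5.3, Proposition~6.6]{CPZ}; you have unpacked that reference explicitly, using the graded collapse $\Gr\tau_I = x_\ga^{|I|}\Gr\Dem_I$ together with the classical BGG identities for $\Gr\Dem_I$, and then the column-permutation triangularity trick for the matrix. Two minor slips worth noting: the existence of $u_0$ is \cite[\S5.2]{CPZ} rather than \cite[Lemma~3.5]{CZZ1} (the latter is the statement $\tilde S\cap Q^F = S$, used elsewhere in the proof of Theorem~\ref{thm:struct}), and your map $\sigma(v)=v^{-1}w_0$ is a bijection but not an involution (one computes $\sigma^2(v)=w_0vw_0$); neither affects the validity of the argument.
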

\begin{proof}The existence of $u_0$ follows from \cite[\S5.2]{CPZ} (the $a$ in loc.it.), and the computation of $\ep\tau_I(u_0)$ follows from Lemma 5.3.3 in {\it loc.it.}. The conclusion about the matrix $(\tau_{I_v}\tau_{I_w}(u_0))_{(v,w)\in W\times W}$ follows from the proof of Proposition 6.6 in {\it loc.it.}.
\end{proof}
Let $z=\sum_{w\in W}u_wT_{I_w}, u_w\in Q^F$  be such that $z\cdot S\subseteq S$, we show that $u_w$ itself belongs to $ S$ for any $w$. We can view $z$ as in $\tilde{Q}^F_W$. Applying $z$ to $\tau_{I_v}(u_0)\in \tilde S$ for all $v\in W$, we get a system of linear equations
\[
\sum_{w\in W}u_w\tau_{I_w}\tau_{I_v}(u_0)=z\cdot \tau_{I_v}(u_0)\in \tilde S.
\]
By Lemma \ref{lem:invmat} we know that the matrix $(\tau_{I_w}\tau_{I_v}(u_0))_{(w,v)\in W^2}$ is invertible, so $u_w\in \tilde S$. We consider the following diagram of embeddings:
\[
\xymatrix@R=20pt@C=20pt{S\ar@{=}[r] &R\lbr \Ga\rbr _F\lbr \La\rbr _F\ar@{^{(}->}[r] \ar@{^{(}->}[d]& R\lbr \Ga\rbr _F[\frac{1}{\ttt x_\ga}]\lbr \La\rbr _F \ar@{^{(}->}[d]\ar@{=}[r] & \tilde S \\
Q^F\ar@{=}[r]& R\lbr \Ga\rbr _F\lbr \La\rbr _F[\frac{1}{x_\Sigma}]\ar@{^{(}->}[r] &  R\lbr \Ga\rbr _F[\frac{1}{\ttt x_\ga}]\lbr \La\rbr _F[\frac{1}{x_\Sigma}] &,}\]
where $x_\Sigma:=\prod_{\al\in \Sigma}x_\al$.  By \cite[Lemma 3.3 and 3.5]{CZZ1}, we know that $u_w\in \tilde S\cap Q^F=S$. So $z\in \HF$.
\end{proof}
The following corollary generalizes \cite[Theorem 7.2.16]{CG}.
\begin{corollary}\label{cor:faithact}Under  Assumption \ref{assump:strong}, the action of $\HF$ on $S$ is faithful.
\end{corollary}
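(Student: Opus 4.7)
The plan combines the structure theorem with the invertibility of the evaluation matrix that already drives its proof. Given $z\in \HF$ with $z\cdot s=0$ for all $s\in S$, Theorem~\ref{thm:struct}(\ref{thm:structHecke}) lets me write
\[
z \;=\; \sum_{w\in W} u_w\, T_{I_w}, \qquad u_w\in Q^F,
\]
so the task reduces to showing that each $u_w$ vanishes.

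First I would upgrade the vanishing from $S$ to the auxiliary ring $\tilde S = R\lbr\Ga\rbr_F[\tfrac{1}{\ttt x_\ga}]\lbr\La\rbr_F$ employed in the proof of Theorem~\ref{thm:struct}. Because $W$ acts trivially on the $\Ga$-factor, every element of $R\lbr\Ga\rbr_F[\tfrac{1}{\ttt x_\ga}]$ is $W$-invariant; a one-line computation then shows $T_i(rs) = r\,T_i(s)$ for any such $r$ and any $s\in\tilde S$. Iterating and using the commutativity of $\tilde Q^F$ gives $z\cdot(rs) = r\,(z\cdot s)$. Writing an arbitrary $\tilde s\in\tilde S$ in the form $\tilde s = r\cdot s$ with $s\in S$ then propagates $z\cdot S=0$ to $z\cdot \tilde S=0$.

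Next I would produce $u_0\in\tilde{\IF}^{\ell(w_0)}$ via Lemma~\ref{lem:invmat} and evaluate $z$ on the family $\tau_{I_v}(u_0)\in\tilde S$, $v\in W$. Since $T_{I_w}$ acts on $\tilde S$ precisely as $\tau_{I_w}$, this yields the linear system
\[
\sum_{w\in W} u_w\cdot \tau_{I_w}\tau_{I_v}(u_0) \;=\; 0, \qquad v\in W,
\]
in $\tilde Q^F$. Lemma~\ref{lem:invmat} ensures that the coefficient matrix $\bigl(\tau_{I_w}\tau_{I_v}(u_0)\bigr)_{w,v\in W}$ is invertible over $\tilde S$, so solving forces $u_w=0$ in $\tilde Q^F$; since $u_w\in Q^F\subset\tilde Q^F$ from the outset, one concludes $u_w=0$ for every $w$ and hence $z=0$.

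The only non-routine ingredient is the passage from $S$ to $\tilde S$; I expect this to be the main (and essentially only) obstacle, and it is resolved by the $W$-invariance of the $\Ga$-factor together with the commutativity of $\tilde Q^F$. Everything else is a direct recycling of the linear-algebraic core of the proof of Theorem~\ref{thm:struct}.
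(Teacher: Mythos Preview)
Your claim that an arbitrary $\tilde s \in \tilde S$ can be written as $r \cdot s$ with $r \in R\lbr\Ga\rbr_F[\tfrac{1}{\ttt x_\ga}]$ and $s \in S$ is false: the element $\sum_{n\ge 0} x_\la^n/(\ttt x_\ga)^n$ lies in $\tilde S$ but has unbounded denominators and so cannot be cleared by a single $r$. Fortunately this step is unnecessary. The element $u_0$ of Lemma~\ref{lem:invmat}, when traced back to \cite[\S5.2]{CPZ}, is actually constructed inside $R\lbr\La\rbr_F \subset S$; consequently $\tau_{I_v}(u_0) \in S$ for every $v$, and you can feed these elements directly into the hypothesis $z\cdot S=0$ without ever extending the action to $\tilde S$. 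The matrix inversion afterwards takes place in $\tilde S \subset \tilde Q^F$, which is harmless since $Q^F \hookrightarrow \tilde Q^F$ (regularity of $\ttt$ and of $x_\ga$ under Assumption~\ref{assump:strong} gives the required injectivity).

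The paper's argument is genuinely different and does not reopen the proof of the structure theorem. From $z\cdot S=0\subseteq S$ the structure theorem first yields $u_w\in S$. Then one observes that $\tfrac{1}{x_\al^j}z$ also annihilates $S$ for every root $\al$ and every $j\ge 1$; a second application of Theorem~\ref{thm:struct}(\ref{thm:structHecke}) forces $u_w/x_\al^j\in S$, which is impossible unless $u_w=0$. Your route recycles the evaluation-matrix machinery inside the proof of the structure theorem, whereas the paper uses that theorem as a black box and bootstraps infinite divisibility by the $x_\al$. Both work once the gap above is patched, but the paper's version is cleaner in that it avoids all contact with $\tilde S$.
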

\begin{proof}Assume that $z=\sum_{w\in W}u_wT_{I_w}\in \HF$ with $u_w\in Q^F$ and $z\cdot S=0$, then the proof of Theorem \ref{thm:struct}(\ref{thm:structHecke}) implies that $u_w\in S$ for all $w\in W$. Moreover, $(\frac{1}{x_\al^j}z)\cdot S=0\subsetneq S$ for any $j\ge 1$ and $\al\in \Sigma$. By Theorem \ref{thm:struct}(\ref{thm:structHecke}), we have $\frac{1}{x^j_\al}z=\sum_{w\in W}\frac{u_w}{x^j_\al}T_{I_w}\in \HF$, that is, $\frac{u_w}{x_\al^j}\in S$ for any $j\ge 1$. This is impossible unless $u_w=0$.
\end{proof}
\begin{corollary}\label{cor:struct2} Suppose that $R\lbr\La\rbr_F$ is $\Sigma$-regular, then
\begin{enumerate}
\item  $\HF$ is the left $S$-module with basis $\{T_{I_w}\}_{w\in W}$.
\item Fix a set of reduced sequences $\{I_w\}_{w\in W}$, and let $I_v'$ is another reduced sequence of $v$, then $T_{I_v}-T_{I_v'}=\sum_{w<v}a_{w}T_{I_w}$ with $a_w\in S$.
\item $\HF$ is the $R$-subalgebra of $Q^F_W$ generated by $S$ and $T_i, i\in [n]$ subject to the relations in Lemma~\ref{lem:XYTproperty}.(5)-(6) and in Theorem~\ref{thm:braid}.
\end{enumerate}
\end{corollary}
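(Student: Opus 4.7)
The plan is to deduce all three parts from the structure theorem Theorem~\ref{thm:struct}(\ref{thm:structHecke}) together with the triangularity statement in Lemma~\ref{lem:deltaT}.

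For part (1), I first note that each $T_i$ preserves $S$ via the formula $T_i\cdot u=x_\ga\Dem_i(u)+s_i(u)$ (both $\Dem_i$ and $s_i$ preserve $S$ under $\Sigma$-regularity), so $T_{I_w}\in\HF$ for every $w$ and hence the left $S$-span of $\{T_{I_w}\}_{w\in W}$ is contained in $\HF$. For the reverse inclusion, Lemma~\ref{lem:deltaT} says that $\{T_{I_w}\}_{w\in W}$ is already a $Q^F$-basis of $Q_W^F$ (the change-of-basis from $\{\de_w\}$ is triangular with invertible diagonal entries $\tilde c_w/c_w$), so any $z\in\HF$ has a unique expansion $z=\sum_w u_w T_{I_w}$ with $u_w\in Q^F$. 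I then invoke the argument inside the proof of Theorem~\ref{thm:struct}(\ref{thm:structHecke}): applying $z$ to the test elements $\tau_{I_v}(u_0)$ produced by Lemma~\ref{lem:invmat} yields a $|W|\times|W|$ linear system whose coefficient matrix is invertible in $\tilde S$, forcing each $u_w\in\tilde S\cap Q^F=S$. Linear independence is automatic from the triangularity in Lemma~\ref{lem:deltaT}.

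For part (2), both $T_{I_v}$ and $T_{I_v'}$ are reduced expressions for the same $v$, so by Lemma~\ref{lem:deltaT} each has $\de_w$-expansion with identical leading term $\frac{\tilde c_v}{c_v}\de_v$. Their difference therefore lies in $\bigoplus_{u<v}Q^F\de_u$. Expanding it as $\sum_w a_w T_{I_w}$ with $a_w\in S$ by part (1), and picking $w^\ast$ maximal in the Bruhat order with $a_{w^\ast}\neq 0$, the $\de_{w^\ast}$-coefficient on the $T$-side equals $a_{w^\ast}\cdot\frac{\tilde c_{w^\ast}}{c_{w^\ast}}$ by maximality and hence is nonzero; since the left side is supported on $u<v$, we conclude $w^\ast<v$, and therefore all $w$ with $a_w\neq 0$ satisfy $w<v$.

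For part (3), let $\widetilde\HF$ be the $R$-algebra abstractly presented by $S$ and $T_1,\dots,T_n$ modulo the listed relations. The relations hold in $Q_W^F$ by Lemma~\ref{lem:XYTproperty}(5)-(6) and Theorem~\ref{thm:braid}, so there is a canonical surjection $\widetilde\HF\to\HF$. To prove injectivity, the commutation Lemma~\ref{lem:XYTproperty}(5) lets me push all $S$-coefficients to the left, so $\widetilde\HF$ is the left $S$-span of pure monomials in the $T_i$. Using the quadratic relation Lemma~\ref{lem:XYTproperty}(6) together with the deformed braid relation Theorem~\ref{thm:braid}(3), whose correction terms lie in $S\cdot T_{I_v}$ with $\ell(v)$ strictly smaller, a Matsumoto-type induction on length rewrites an arbitrary monomial as a finite $S$-combination of $\{T_{I_w}\}_{w\in W}$. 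Hence $\widetilde\HF$ is generated as a left $S$-module by at most $|W|$ elements which map to the $S$-basis of $\HF$ given by part (1), forcing the surjection to be an isomorphism. The main obstacle is ensuring termination of this Matsumoto reduction: since each braid replacement introduces lower-length error terms rather than a clean rewriting, I will need a filtration argument (e.g.\ by length together with a lexicographic order on reduced words) to verify that the rewriting procedure closes within $\sum_w S\cdot T_{I_w}$.
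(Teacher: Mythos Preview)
Your arguments for parts (2) and (3) are essentially what the paper has in mind (it defers to \cite[Lemma 7.1]{CZZ1} and \cite[Theorem 6.14]{HMSZ} respectively), and your Matsumoto-type rewriting sketch is the standard route; the termination concern you flag is real but routine once part (1) is in hand.

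The gap is in part (1). The argument you invoke---applying $z$ to the test elements $\tau_{I_v}(u_0)$, inverting the matrix of Lemma~\ref{lem:invmat}, and then using $\tilde S\cap Q^F=S$---is exactly the proof of Theorem~\ref{thm:struct}(\ref{thm:structHecke}), and that proof is carried out under Assumption~\ref{assump:strong}, not merely under $\Sigma$-regularity. Concretely, the passage to $\tilde S=R\lbr\Ga\rbr_F[\tfrac{1}{\ttt x_\ga}]\lbr\La\rbr_F$ inverts the torsion index $\ttt$, and the identification $\tilde S\cap Q^F=S$ (via \cite[Lemma~3.3, 3.5]{CZZ1}) needs $\ttt$ regular in $R$; in the $C_l^{sc}$ case one even needs $2\in R^\times$. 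None of this is available under the bare hypothesis that $R\lbr\La\rbr_F$ is $\Sigma$-regular.

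The paper closes this gap not by refining the test-element argument but by a functoriality step: one first proves part (1) over a universal base (e.g.\ the Lazard ring, where Assumption~\ref{assump:strong} holds because $\ttt$ is a nonzero integer in a polynomial ring over $\Z$), and then base-changes along the classifying map $\Laz\to R$, using that the formation of $S$, $Q_W^F$, and $\HF$ is compatible with change of coefficient ring and that the $\{T_{I_w}\}$ are already linearly independent over $Q^F$ by Lemma~\ref{lem:deltaT}. This reduction is what the paper means by ``the general case follows from functoriality \dots\ similar as \cite[Corollary 4.12]{Zh13}''. Your write-up should insert this step; without it, your proof of (1) only establishes the corollary under Assumption~\ref{assump:strong}.
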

\begin{proof}
If the assumption in Theorem \ref{thm:struct} is satisfied, then its proof implies that $\{T_{I_w}\}_{w\in W}$ is a basis of $\HF$ as a $S$-module. The general case follows from functoriality of the formal group algebra and $\HF$, similar as \cite[Corollary 4.12]{Zh13}. 

 Statements~(2)  and (3) follow from similar arguments as in \cite[Lemma 7.1]{CZZ1} and \cite[Theorem 6.14]{HMSZ}, respectively.
\end{proof}
\begin{remark}\label{rem:JFproperty} It is not difficult to see that $\{J^F_{I_w}\}_{w\in W}$ satisfies  the same properties in Corollary \ref{cor:struct2}.(1)-(2) after replacing $T$ by $J$.
\end{remark}

The following corollary generalizes \cite[Theorem 6.5]{Lus88} and \cite[Theorem 7.1.14]{CG}.
\begin{corollary}If $R$ contains $\Z$, then the center of $\HF$ is equal to $S^W$.
\end{corollary}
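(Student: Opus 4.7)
The plan is to prove the two inclusions separately. For $S^W \subseteq Z(\HF)$, I would check that any $u \in S^W$ commutes with the generators of $\HF$: commutativity with $S$ is automatic since $S$ is commutative, and Lemma~\ref{lem:XYTproperty}.(5) gives $T_i u - s_i(u) T_i = x_\ga \Dem_i(u)$; for $u \in S^W$, both $s_i(u) = u$ and $\Dem_i(u) = (u-s_i(u))/x_i = 0$, so $u T_i = T_i u$ as well.

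For the nontrivial inclusion $Z(\HF) \subseteq S^W$, I would take $z \in Z(\HF)$ and use the left $S$-module basis $\{T_{I_w}\}_{w\in W}$ from Corollary~\ref{cor:struct2}.(1) to write $z = \sum_{w\in W} u_w T_{I_w}$ uniquely with $u_w \in S$. Iterating the commutation $T_i f = s_i(f)T_i + x_\ga \Dem_i(f)$ from Lemma~\ref{lem:XYTproperty}.(5), and using Corollary~\ref{cor:struct2}.(2) to absorb the ambiguity of reduced sequences into terms of strictly smaller length, an induction on $\ell(w)$ yields
\[
T_{I_w}\, u \;\equiv\; w(u)\, T_{I_w} \pmod{\textstyle\sum_{\ell(v)<\ell(w)} S \cdot T_{I_v}}\qquad (u \in S).
\]
Imposing $[z,u] = 0$ and extracting the coefficient of $T_{I_w}$ for $w$ of maximal length with $u_w\neq 0$ then forces $u_w(w(u) - u) = 0$ for every $u \in S$. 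Specializing $u = x_\la$ for $\la \in \La$ with $w(\la) \ne \la$ and factoring $x_{w(\la)} - x_\la = x_{w(\la)-\la}\cdot \eta$ with $\eta \in S^\times$ (analogous to the proof of Lemma~\ref{lem:fgl1}), one obtains $u_w\cdot x_{w(\la)-\la} = 0$. Granting that this forces $u_w = 0$, descending induction on length gives $z = u_e \in S$. A final short check---comparing the $T_{I_e}$- and $T_{I_{s_i}}$-coefficients in the identity $u_e T_i = T_i u_e = s_i(u_e)T_i + x_\ga \Dem_i(u_e)$---forces $s_i(u_e) = u_e$ for each simple reflection, hence $u_e \in S^W$.

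The main obstacle, and the only place the hypothesis $\Z \subseteq R$ is used, is the zero-divisor step: deducing $u_w = 0$ from $u_w\cdot x_{w(\la)-\la} = 0$. Passing to the associated graded with respect to the augmentation filtration, $\Gr S \cong \Sym_R^*(\Ga \oplus \La)$, the leading term of $x_{w(\la)-\la}$ is the linear form $w(\la) - \la \in \La \otimes_\Z R$, which is nonzero because $\Z \inj R$. A unimodular $\Z$-change of basis on $\La$ turns this form into $d\cdot X$ for some nonzero integer $d$ and a coordinate $X$; since $\Z \subseteq R$ makes $d$ a non-zero-divisor in $R$, the element $d\cdot X$ is a non-zero-divisor in $\Gr S$, and the standard filtered-to-graded lifting argument shows $x_{w(\la)-\la}$ is a non-zero-divisor in $S$. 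This yields $u_w = 0$ and completes the proof.
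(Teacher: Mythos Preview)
Your proof is correct and follows the standard strategy for computing the center of Hecke-type algebras: the triangular commutation relation $T_{I_w} u \equiv w(u) T_{I_w}$ modulo lower Bruhat terms, followed by a regularity argument. The paper itself does not spell out a proof but simply observes that $\Z \subseteq R$ forces $\Sigma$-regularity and defers to \cite[Corollary~4.13]{Zh13}; your argument is essentially what that reference does, so the approaches coincide.

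One small comment on presentation: you correctly identify the role of the hypothesis $\Z \subseteq R$ as guaranteeing that $x_{w(\la)-\la}$ is a non-zero-divisor, but note that the same hypothesis is already implicitly used earlier, since Corollary~\ref{cor:struct2}.(1) (the $S$-basis $\{T_{I_w}\}$) requires $\Sigma$-regularity, which in turn follows from $2$ being regular in $R$---a consequence of $\Z \subseteq R$. It may be worth stating this at the outset rather than only invoking the hypothesis at the zero-divisor step.
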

\begin{proof} In this case $R\lbr\La\rbr_F$ is $\Sigma$-regular,  and the conclusion follows similarly as \cite[Corollary 4.13]{Zh13}.
\end{proof}

\subsection{The residue construction of Ginzburg--Kaprankov--Vasserot}
In the remaining part of this section, we work under Assumption \ref{assump:div}. We interpret the structure theorems in terms of the residue-vanishing conditions in \cite{GKV97}.

\begin{definition} For any $z=\sum_{w\in W}a_w\de_w\in Q_W^F$ with $a_w\in Q^F$, consider the following conditions:
\begin{enumerate}
\item[\textbf{R}1]  for any root $\al$, $x_\al a_w\in Q_\al$;
\item[\textbf{R}2] for any root $\al$, $a_w+a_{s_\al w}\in Q_\al$;
\item[\textbf{R}3]  ${a_w}/{\tilde c_w}\in Q^F$, where $\tilde c_w$ was defined in \eqref{eq:tildec}.
\end{enumerate}
We define $\widetilde \DF$ to be the subset of $Q_W^F$ consisting of $z=\sum_{w\in W}a_w\de_w$ with $a_w\in Q^F$ satisfying \textbf{R}1 and \textbf{R}2, and define $\widetilde \HF\subsetneq \widetilde \DF$ consisting of  $z=\sum_{w\in W}a_w\de_w$, $a_w\in Q^F$ satisfying \textbf{R}3.
 
 \end{definition}
 
 These conditions we give here can be interpreted as conditions on the residues similar to \cite{GKV97}.  Indeed, for the formal group law $(R,F)$, we have the formal scheme $\Spf(S)$. Inside $\Spf(S)$, there are divisors defined by $x_\alpha$ for each root $\alpha$. Then elements 
 of $ Q^F$ can be re-interpreted as rational functions on $\Spf(S)$ which are regular away from the divisors defined by $x_\alpha$'s. Condition \textbf{R}1 is equivalent to saying that each $a_w$ has at most a pole of order 1 along the divisor defined by $x_\alpha$. Define the residue of $f$ at the divisor $x_\alpha=0$ to be $\Res_{x_\alpha}f:=\pi_\alpha(x_\alpha f)\in Q_\al/(x_\alpha)$ where $\pi_\alpha:Q_\al\to Q_\al/(x_\alpha)$ is the natural projection. Then condition \textbf{R}2 is equivalent to letting $\Res_{x_\alpha}a_w+\Res_{x_\alpha}a_{s_\alpha w}=0$ for any $w\in W$. 
 Note that although $\Res_{x_\alpha}$ defined this way depends on a choice of coordinates, the condition \textbf{R}2 is independent of coordinates.  Condition \textbf{R}3 is equivalent to letting each $a_w$ be vanishing along the divisor defined by $x_\alpha=x_\gamma$ for any $\al\in \Sigma(w)$. 
 
 It follows from similar argument as in \cite[Theorem 1.4]{GKV97} that $\widetilde \DF$  and $\widetilde \HF$ are algebras (note that this argument relies on Lemma \ref{lem:div}, hence we need to work under Assumption \ref{assump:div}).
 \begin{theorem}\label{thm:residueDH} Under Assumption~\ref{assump:div},  we have $\widetilde \DF=\DF$ and $\widetilde \HF=\HF$ as subalgebras of $Q_W^F$.
 \end{theorem}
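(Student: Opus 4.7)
The plan is to prove each equality by verifying both inclusions separately. For the forward inclusions $\DF\subseteq\widetilde\DF$ and $\HF\subseteq\widetilde\HF$, since $\widetilde\DF$ and $\widetilde\HF$ are already known to be subalgebras of $Q_W^F$, it suffices to check the residue conditions on the generating sets. Elements of $S$ are supported only on $\delta_e$ and satisfy R1--R3 trivially. For $X_i=\tfrac1{x_i}-\tfrac1{x_i}\delta_{s_i}$, both nonzero coefficients have a simple pole only at $x_{\al_i}=0$, and they sum to zero, yielding R1 and R2. For $T_i=\tfrac{x_\ga}{x_i}+\tfrac{x_i-x_\ga}{x_i}\delta_{s_i}$, the extra condition R3 reduces to $a_{s_i}/(x_i-x_\ga)=1/x_i\in Q^F$, which is immediate.

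For the reverse inclusion $\widetilde\DF\subseteq\DF$, I would invoke the structure theorem (Theorem~\ref{thm:struct}(\ref{prop:structDem})) and reduce to showing that any $z=\sum_w a_w\delta_w$ satisfying R1--R2 sends $S$ into $S$. Fix $\al\in\Sigma^+$ and pair each $w\in W$ with $s_\al w$; for $u\in S$, set $v=w(u)\in S$ and use $s_\al(v)=v-x_\al\Dem_\al(v)$ (with $\Dem_\al(v)\in S$ under $\Sigma$-regularity) to write
\[
a_w\cdot w(u)+a_{s_\al w}\cdot s_\al w(u)=(a_w+a_{s_\al w})\,v-x_\al\,a_{s_\al w}\,\Dem_\al(v).
\]
By R2 the first summand lies in $Q_\al$ and by R1 so does $x_\al\,a_{s_\al w}$; hence each pair is regular at $x_\al=0$. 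Summing over pairs and over all positive roots gives $z\cdot u\in\bigcap_\al Q_\al=S$, so $z\in\DF$.

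For $\widetilde\HF\subseteq\HF$, the equality $\widetilde\DF=\DF$ just proved ensures $\widetilde\HF\subseteq\DF$, so every such $z$ preserves $S$; by Theorem~\ref{thm:struct}(\ref{thm:structHecke}) it remains to show $z=\sum_v u_v T_{I_v}$ with $u_v\in Q^F$. Lemma~\ref{lem:deltaT} gives such a presentation \emph{a priori} with $u_v\in Q^F[1/\tilde c_{w_0}]$, so the task is to eliminate the spurious $(x_\al-x_\ga)$-denominators. I would proceed by downward induction on the Bruhat order: at $v=w_0$, the triangular identity $b^T_{w_0,w_0}=\tilde c_{w_0}/c_{w_0}$ gives $u_{w_0}=a_{w_0}c_{w_0}/\tilde c_{w_0}$, and R3 combined with the coprimality of $x_\al$ and $x_\al-x_\ga$ together with Lemma~\ref{lem:div} forces $u_{w_0}\in Q^F$. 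At each shorter $v$, after subtracting the contributions from strictly longer $w$ already shown to have $Q^F$-coefficients, the same divisibility argument applies.

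The principal technical point is this final downward induction: at each stage one must verify that the residual coefficient of $\delta_v$, after removing higher-length $T_{I_w}$-contributions, has its $(x_\al-x_\ga)$-factors cancelled exactly by $\tilde c_v$. This is the precise role of Assumption~\ref{assump:div} via Lemma~\ref{lem:div}, which allows the separation of $x_\al$-type denominators from $(x_\al-x_\ga)$-type ones, and it mirrors the residue-vanishing argument of \cite[Theorem~1.4]{GKV97}.
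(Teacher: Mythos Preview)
Your argument follows the paper's proof closely: the forward inclusions via generators, the pairing $w\leftrightarrow s_\al w$ for $\widetilde\DF\subseteq\DF$, and the downward induction on length for $\widetilde\HF\subseteq\HF$ are all the same. Two small points where the paper is tighter: at the top of the induction, R3 gives $a_{v_0}/\tilde c_{v_0}\in Q^F$ directly, so $u_{v_0}=(a_{v_0}/\tilde c_{v_0})\,c_{v_0}\in Q^F$ without any appeal to coprimality or Lemma~\ref{lem:div}; and for the inductive step the paper simply observes that $u_{v_0}T_{I_{v_0}}$ itself lies in $\widetilde\HF$ (since $T_{I_{v_0}}\in\widetilde\HF$, the latter being a subalgebra, and left multiplication by $Q^F$ preserves R3), so the remainder $z-u_{v_0}T_{I_{v_0}}$ still satisfies R3 and has strictly shorter support. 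The role of Lemma~\ref{lem:div} is thus upstream---it is what guarantees that $\widetilde\DF$ and $\widetilde\HF$ are subalgebras, making both the generator check and the closure of $\widetilde\HF$ under products $T_{i_1}\cdots T_{i_r}$ available---rather than a per-step divisibility tool.
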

 \begin{proof} Also it is easy to see that $X_i\in \widetilde \DF$ and $T_i\in \widetilde\HF$ for any $i$. Therefore, we have $\DF\subseteq\widetilde \DF$ and $\HF\subseteq \widetilde \HF$. 
 
 Let $z=\sum_{w\in W}a_w\de_w\in \widetilde \DF$, with $ a_w\in Q^F$. To show that $\widetilde \DF\subseteq \DF$,  we need to show that $z\cdot S\subseteq S$. For any root $\al$, let ${}^\al W=\{w\in W|\ell(s_\al w)>\ell(w)\}$, then $W=^\al W\sqcup s_\al\cdot {}^{\al}W$. For any $x\in S$, we have
 \begin{gather*}
 z\cdot x=\sum_{w\in W}a_ww(x)=\sum_{w\in {}^\al W}\left(a_{w}w(x)+a_{s_\al w}s_\al w(x)\right)\\
 =\sum_{w\in ^\al W}\left((a_w+a_{s_\al w})w(x)+a_{s_\al w}(s_\al w(x)-w(x))\right).
 \end{gather*}
 We know by \textbf{R}2 that  $a_w+a_{s_\al w}\in Q_\al$ , and $w(x) \in S$. Moreover, $s_\al w(x)-w(x)=x_\al \Dem_\al(w(x))$. By \textbf{R}1 we know that $x_\al a_{s_\al w}\in Q_\al$.  So $z\cdot x\in Q_\al$ for any $\al$, and hence $z\cdot x\in \cap _{\al\in \Sigma^+}Q_\al=S.$ In particular, $\widetilde \DF\subseteq \DF$ by Theorem~\ref{thm:struct}( \ref{prop:structDem}).

Now we prove that $\widetilde \HF\subseteq \HF$. If $z\in \widetilde \HF\subsetneq \widetilde \DF$, then from the first part we know that $\DF=\widetilde \DF$, so $z\cdot S\subseteq S$. By Theorem \ref{thm:struct}(\ref{thm:structHecke}), it suffices to  show that if $z$ satisfies Condition \textbf{R}3, then it is a $Q$-linear combination of $\{T_{I_w}\}_{w\in W}$. According to Lemma \ref{lem:deltaT}, we have
\[
z=\sum_{\ell(w)\le k}a_w\de_w=\sum_{\ell(w)\le k}a_w\sum_{v\le w}a^T_{w,v}T_{I_v}=\sum_{v\in W}(\sum_{v\ge w, \ell(w)\le k}a_wa^T_{w,v})T_{I_v}.
\]
Fix $v_0\in W$ such that $\ell(v_0)=k$, then 
\[b_{v_0}:=\sum_{w\le v_0, \ell(w)\le k}a_wa_{w,v_0}^T=a_{v_0}a^T_{v_0,v_0}=a_{v_0}\frac{c_{v_0}}{\tilde c_{v_0}}.\]
By  Condition \textbf{R}3, $\frac{a_{v_0}}{\tilde c_{v_0}}$ lies in $ Q^F$, hence  $b_{v_0}\in Q^F$. Note that $b_{v_0}T_{I_{v_0}}$ satisfies Condition \textbf{R}3, so it suffices to replace $z$ by $z':=z-b_{v_0}T_{I_{v_0}}=\sum_{\ell(w)\le k}a'_w\de_w$ which satisfies  $a_{v_0}'=0$. Repeating the above argument on $z'$, and by decreasing induction on $k$ we see that $z$  is a $Q^F$-linear combination of $\{T_{I_w}\}_{w\in W}$. This finishes the proof.
\end{proof}

\section{Further properties and examples of the formal affine Hecke algebra}\label{sec:furtherProp}
In this section we study  the formal affine Hecke algebra when the formal group law is $F_a$ or $F_m$, or when the ring $R$ contains the field of rational numbers. We also prove the PBW property, i.e., we construct a filtration on the formal affine Hecke algebra whose associated graded ring is isomorphic to the degenerated affine Hecke algebra.

\subsection{The cases of special formal group laws}
\begin{prop}Let $R=\Z$ and $F=F_m(x,y)=x+y-xy$. Define the map 
\[\Z[q,q^{-1}][\La]\hookrightarrow \Z[\Ga\oplus\La]^\wedge\cong \Z\lbr\Ga\oplus\La\rbr_{F_m}, ~q\mapsto \frac{1}{1-x_\ga}, ~e^{\la}\mapsto 1-x_{-\la}.\]
Then there is an isomorphism of rings
\[\psi:\mathbf{H}_{F_m}\overset{\sim}\longrightarrow \Z[\Ga\oplus\La]^\wedge\otimes_{\Z[q,q^{-1}][\La]}H_{\aff},\hbox{ with } ~\psi(T^{F_m}_{i})=1\otimes\frac{T_{-i}}{q}\hbox{ and }~\psi(x_\la)= x_{-\la}\otimes 1\hbox{ for }~\la\in \La,
\]
such that that following diagram commutes
\[
\xymatrix{\mathbf{H}_{F_m}\ar[r]^-{f_1}\ar[d]_-\psi& \End_{\Z}(\Z[\Ga\oplus\La]^\wedge)\\
   \Z[\Ga\oplus \La]^\wedge\otimes_{\Z[q,q^{-1}][\La]}H_{\aff} \ar[ru]_-{f_2} &.}
\]
Here $f_2$ is induced by the action of  $H_{\aff}$ on $\Z[q,q^{-1}][\La]$ (see \cite[Theorem 7.2.16]{CG}), and $f_1$ is induced by the action of $\mathbf{H}_{F_m}$ on $R\lbr\Ga\oplus\La\rbr_{F_m}$.
\end{prop}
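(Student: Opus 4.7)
The plan is to define $\psi$ on the generators of $\mathbf{H}_{F_m}$, verify that it respects the defining relations, show bijectivity using the Weyl-group-indexed bases on both sides, and finally confirm the commutative diagram via the faithfulness of $f_1$. The starting point is the identification $\Z\lbr\Ga\oplus\La\rbr_{F_m}\cong\Z[\Ga\oplus\La]^\wedge$ from the multiplicative-FGL example, under which $x_\la\mapsto 1-e^{-\la}$ and $x_\ga\mapsto 1-q^{-1}$, equivalently $q=(1-x_\ga)^{-1}$. I interpret $T_{-i}$ in $H_{\aff}$ as the $\iota$-conjugate $\iota T_i\iota$, equivalently the operator on $\Z[q,q^{-1}][\La]$ obtained from (\ref{eq:Haffact}) by substituting $\al_i\to-\al_i$; the division by $q$ forces a mild completion, which is harmless after base change to $\Z[\Ga\oplus\La]^\wedge$. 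The twist by $\iota$ is also the reason for the appearance of $x_{-\la}$ on the right-hand side of $\psi(x_\la)=x_{-\la}\otimes 1$: the action of $\Z[q,q^{-1}][\La]$ on itself in (\ref{eq:Haffact}) is already $\iota$-twisted.

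Well-definedness reduces to checking the presentation of $\mathbf{H}_{F_m}$ from Corollary~\ref{cor:struct2}(3). Since $\ka^{F_m}=1$, the quadratic relation $(T_i^{F_m})^2=x_\ga T_i^{F_m}+1-x_\ga$ maps, after multiplication by $q^2$, to $T_{-i}^2=(q-1)T_{-i}+q$, which is the classical quadratic relation of $H_{\aff}$. The braid relations coincide with those of $H_{\aff}$ by Remark~\ref{rem:indIw}, since for $F_m$ the deformation terms $u_v^{ij}$ in Theorem~\ref{thm:braid}(3) vanish. The commutation relation $T_i^{F_m}u-s_i(u)T_i^{F_m}=x_\ga\Dem_i(u)$ for $u\in S$ is the delicate step; after substituting $x_\la=1-e^{-\la}$ and $q=(1-x_\ga)^{-1}$ and using that the $\Z[q,q^{-1}][\La]$-action in the target is $\iota$-twisted, it translates precisely to the Lusztig commutation rule between $T_{-i}$ and $e^\la$ in $H_{\aff}$.

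Bijectivity then follows from a basis comparison: $\mathbf{H}_{F_m}$ is free over $S=\Z\lbr\Ga\oplus\La\rbr_{F_m}$ on $\{T_{I_w}^{F_m}\}_{w\in W}$ by Corollary~\ref{cor:struct2}(1), the right-hand side is free over $\Z[\Ga\oplus\La]^\wedge$ on $\{1\otimes T_w\}_{w\in W}$, and $\psi(T_{I_w}^{F_m})=q^{-\ell(w)}\otimes T_{-w}$ differs from $\{1\otimes T_w\}$ only by the automorphism $\iota$ and a unit $q^{-\ell(w)}$, so the transition matrix is invertible. For the commutative diagram, both $f_1$ and $f_2\circ\psi$ are algebra maps into $\End_\Z(\Z[\Ga\oplus\La]^\wedge)$; faithfulness of $f_1$ by Corollary~\ref{cor:faithact} reduces the check to generators. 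For $x_\la$ this boils down to $\iota(x_{-\la})=x_\la$, and for $T_i^{F_m}$ it is the direct calculation
\begin{equation*}
T_i^{F_m}\cdot e^\la=\tfrac{x_\ga}{x_{\al_i}}(e^\la-e^{s_i(\la)})+e^{s_i(\la)}=\tfrac{(q-1)e^{\la+\al_i}+e^{s_i(\la)+\al_i}-qe^{s_i(\la)}}{q(e^{\al_i}-1)}=\tfrac{T_{-i}}{q}\cdot e^\la,
\end{equation*}
the last equality coming from (\ref{eq:Haffact}) applied with $\al_i\to-\al_i$ and divided by $q$.

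The main obstacle is the uniform bookkeeping of the $\iota$-twist: each time one passes between $x_\la$ and $1-e^{-\la}$, or between $q$ and $(1-x_\ga)^{-1}$, the presence or absence of the $\iota$-conjugation must be tracked carefully to avoid spurious signs in the Demazure part. Once the displayed identity above is secured, the rest follows from the faithful actions and the $S$-basis property of $\mathbf{H}_{F_m}$.
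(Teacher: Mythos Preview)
Your proposal is correct and follows essentially the same route as the paper: both arguments rest on the presentation of $\mathbf{H}_{F_m}$ from Corollary~\ref{cor:struct2}(3), verify that the quadratic, braid (via Remark~\ref{rem:indIw}), and commutation relations match those of $H_{\aff}$ under the dictionary $x_\la\leftrightarrow 1-e^{-\la}$, $q\leftrightarrow(1-x_\ga)^{-1}$, and then confirm the commutative diagram by a direct computation on generators. Your displayed identity for $T_i^{F_m}\cdot e^\la$ is exactly the mirror of the paper's check: the paper computes $f_1\bigl(\tfrac{1}{1-x_\ga}T_{-i}^{F_m}\bigr)$ and matches it with $f_2(T_i)$, while you compute $f_1(T_i^{F_m})$ and match it with $\tfrac{1}{q}T_{-i}$; these are the same calculation read from opposite ends of the $\iota$-twist.

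Two minor redundancies worth noting. First, once both algebras are shown to have the \emph{same} presentation over $S\cong\Z[\Ga\oplus\La]^\wedge$, the isomorphism is automatic; your separate basis-comparison via $\{T_{I_w}^{F_m}\}$ and $\{T_w\}$ is therefore not needed (and indeed the paper omits it). Second, the appeal to faithfulness of $f_1$ (Corollary~\ref{cor:faithact}) is unnecessary for the diagram: since $f_1$ and $f_2\circ\psi$ are already algebra homomorphisms, equality on generators suffices without any injectivity hypothesis. Neither point affects correctness.
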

\begin{proof} By Corollary \ref{cor:struct2}, the algebra $\HF$ is generated by $\Z\lbr\Ga\oplus\La\rbr_{F_m}$ and $\{T_i^{F_m}\}_{i\in [n]}$, subject to the braid relations (Remark \ref{rem:indIw}) and the relations (5)  and (6) in Lemma \ref{lem:XYTproperty}. Direct computation shows that (5) and (6) coincide with the corresponding defining relation of $H_{\aff}$ in Example~\ref{ex:Haff}. In particular, $\psi$ is an isomorphism. 

We also have that 
\[
f_1(\frac{1}{1-x_\ga}T_{-i}^{F_m}):e^\la\mapsto \frac{e^\la-e^{s_i(\la)}}{e^{\al_i}-1}-q\frac{e^\la-e^{s_i(\la)+\al_i}}{e^{\al_i}-1} \text{ and }f_1(x_\mu): e^\la\mapsto x_{\mu}e^\la.
\]
Therefore, by \eqref{eq:Haffact}, $f_1(\frac{1}{1-x_\ga}T_{-i}^{F_m})=f_2(T_i)=f_2\circ\psi(\frac{1}{1-x_\ga}T_{-i}^{F_m})$ and $f_1(x_\mu)=f_2(x_{-\mu})=f_2\circ \psi(x_\mu)$. So the diagram commutes.
\end{proof}
\begin{remark} Since the action of $\Z[q,q^{-1}][\La]\subsetneq H_{\aff}$ on $\Z[q,q^{-1}][\La]$ involves an automorphism $\iota$ (see Example \ref{ex:Haff}),  our definition of $\psi$ maps $x_\la$ to $x_{-\la}\otimes 1$ so that the above diagram commutes.

\end{remark}
\begin{prop}\label{prop:HFa}Let $F=F_a$, we define a map
\[
S^*_{\Z[\ep]}(\La)\to \Z\lbr\Ga\oplus \La\rbr_{F_a}, ~\ep\mapsto x_{\ga}, ~\la\mapsto x_{\la}, 
\]
then there is an isomorphism of rings
\[
\mathbf{H}_{F_a}\cong \Z\lbr\Ga\oplus\La\rbr_{F_a}\otimes_{S^*_{\Z[\ep]}(\La)}H_{deg}, ~T^{F_a}_i\mapsto 1\otimes \theta_i, ~x_\la\mapsto x_\la\otimes 1,
\]
which intertwines the actions of these two rings on $\Z\lbr\Ga\oplus\La\rbr_{F_a}$.
\end{prop}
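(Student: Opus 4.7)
The strategy is to present $\mathbf{H}_{F_a}$ via generators and relations from the structure theorems, define the proposed map on generators, verify the relations are preserved, compare bases on both sides to conclude it is an isomorphism, and finally check the two actions on $\Z\lbr\Ga\oplus\La\rbr_{F_a}$ agree.

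First I would specialize the presentation of Corollary~\ref{cor:struct2}(3) to $F=F_a$. For the additive law $\ka^{F_a}=0$, so Lemma~\ref{lem:XYTproperty}(6) collapses to $T_i^2=1$. Since $F_a = F_m\rvert_{\beta=0}$, Remark~\ref{rem:indIw} tells us that the braid relations among the $T_i^{F_a}$ hold undeformed, i.e.\ $(T_iT_jT_i\cdots)_{m_{ij}} = (T_jT_iT_j\cdots)_{m_{ij}}$. Combined with the Leibniz rule $T_iu-s_i(u)T_i=x_\ga\Dem_i(u)$ from Lemma~\ref{lem:XYTproperty}(5), these present $\mathbf{H}_{F_a}$ as the $\Z$-algebra generated by $S=\Z\lbr\Ga\oplus\La\rbr_{F_a}$ and $\{T_i\}_{i\in[n]}$ modulo exactly these relations. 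Write $\mathcal{H}:=\Z\lbr\Ga\oplus\La\rbr_{F_a}\otimes_{S^*_{\Z[\ep]}(\La)}H_{deg}$ for the target and define $\psi$ on generators by $T_i^{F_a}\mapsto 1\otimes\theta_i$, $x_\la\mapsto x_\la\otimes 1$.

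The quadratic relation $\theta_i^2=1$ and the plain braid relations among the $\theta_i$ hold in $H_{deg}$ and hence in $\mathcal{H}$. For the Leibniz rule, additivity in $R\lbr\La\rbr_{F_a}$ gives $x_{s_i(\la)}=x_\la-\angl{\la,\al_i^\vee}x_i$, so $\Dem_i(x_\la)=\angl{\la,\al_i^\vee}$; under the base change $\ep\mapsto x_\ga$, $\la\mapsto x_\la$ the defining identity $\theta_i\la-s_i(\la)\theta_i=\ep\angl{\la,\al_i^\vee}$ of $H_{deg}$ becomes precisely $\psi$ applied to the Leibniz rule at $u=x_\la$. Extending from the dense polynomial subring to arbitrary $u\in\Z\lbr\Ga\oplus\La\rbr_{F_a}$ requires $\IF$-adic continuity of both sides, which we get from the observation that $\mathcal{H}$ is finite free of rank $|W|$ over $\Z\lbr\Ga\oplus\La\rbr_{F_a}$ on $\{1\otimes\theta_w\}_{w\in W}$ (using freeness of $H_{deg}$ over $S^*_{\Z[\ep]}(\La)$) and so carries a natural completed-module topology.

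Bijectivity then follows from a basis comparison: Corollary~\ref{cor:struct2}(1) together with Remark~\ref{rem:indIw} makes $\mathbf{H}_{F_a}$ free as a left $\Z\lbr\Ga\oplus\La\rbr_{F_a}$-module on $\{T_w\}_{w\in W}$ (choice of reduced expression being immaterial in the additive case), while the analogous freeness for $H_{deg}$ exhibits $\mathcal{H}$ as free on $\{1\otimes\theta_w\}_{w\in W}$. An easy induction on $\ell(w)$ shows $\psi$ carries one basis onto the other. Finally, the intertwining of actions reduces to checking $T_i^{F_a}\cdot x_\la = x_\ga\angl{\la,\al_i^\vee}+x_{s_i(\la)}$ on one side and $\theta_i\cdot\la = \ep\angl{\la,\al_i^\vee}+s_i(\la)$ on the other, which match under the scalar translation and then extend to power series by the same continuity argument. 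The main technical point requiring care is precisely this passage from the polynomial generators $x_\la$ to arbitrary elements of $\Z\lbr\Ga\oplus\La\rbr_{F_a}$, i.e.\ pinning down the topology on $\mathcal{H}$ and the continuity of the relevant structure maps; everything else is a direct match of defining relations.
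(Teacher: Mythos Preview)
Your proposal is correct and follows essentially the same route as the paper: use the presentation of $\mathbf{H}_{F_a}$ from Corollary~\ref{cor:struct2}(3), observe via Remark~\ref{rem:indIw} and $\ka^{F_a}=0$ that the relations collapse to exactly those of $H_{deg}$ after the identification $\ep\leftrightarrow x_\gamma$, $\lambda\leftrightarrow x_\lambda$, and check the actions agree by direct computation. The paper's proof is considerably terser and simply asserts ``subject to the same relations, so the two rings are isomorphic''; your explicit basis comparison and your flag about extending the Leibniz rule from the polynomial generators $x_\lambda$ to arbitrary power series (and the attendant question of what algebra structure the tensor product $\Z\lbr\Ga\oplus\La\rbr_{F_a}\otimes_{S^*_{\Z[\ep]}(\La)}H_{deg}$ carries) are legitimate technical points that the paper glosses over, but they do not constitute a different method.
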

\begin{proof}
Identifying $\ep$ with $x_\ga$ and $\la$ with $x_\la$ respectively, we have 
\[
T^{F_a}_i: \la\mapsto \ga\frac{\la-x_i(\la)}{\al_i}+s_i(\la)=\theta_i\cdot \la.
\]
Moreover, by Remark \ref{rem:indIw} and Corollary~\ref{cor:struct2}, the algebra $\mathbf{H}_{F_a}$ is generated by $\Z\lbr\Ga\oplus\La\rbr_{F_a}$ and $\{T^{F_a}_i\}_{i\in [n]}$  subject to the same relations as $H_{deg} $. So the two rings are isomorphic. The fact that the isomorphism commutes with the actions follows from direct computation.
\end{proof}

\begin{example} Let $F_l(x,y)=\frac{x+y}{1+\beta xy}$ be the Lorentz formal group law, then $-_{F_l}x=-x, \ka^{F_l}=0$ and $\ka^{F_l}_{ij}=\beta$. We have
\begin{enumerate}
\item $(T_i^{F_l})^2=T_i^{F_l}$,
\item If $m_{ij}=2$, then $T_i^{F_l}T_j^{F_l}=T_j^{F_l}T_i^{F_l}$.
\item If $m_{ij}=3$, then $T_i^{F_l}T_j^{F_l}T_i^{F_l}-T_j^{F_l}T_i^{F_l}T_j^{F_l}=\beta x_\ga(T_j-T_i)$.
\end{enumerate}
If the root datum is simply laced, then these three relations together with Lemma \ref{lem:XYTproperty}.(5) form a complete set of defining relations of $\mathbf{H}_{F_l}$.
\end{example}

\subsection{The formal affine Hecke algebra with rational coefficients}\label{subsec:HFrational}
Suppose $R$ is a $\Q$-algebra. From  \S\ref{subsec:fga} we know that there is an isomorphism $\phi_{F,F_a}:R\lbr\Ga\oplus \La\rbr_F\to R\lbr\Ga\oplus\La\rbr_{F_a}$, which induces an isomorphism $Q^{F}_W\to Q^{F_a}_W$ by $u\de_w\mapsto \phi_{F,F_a}(u)\de_w$. 
\begin{lemma}\label{lem:fgl2}If $f(x)\in \Q\lbr x\rbr$ such that $f(0)=0$ and the coefficient of $x$ is nonzero, then $f(x)(x-y)$ divides $xf(y)-yf(x)$.
\end{lemma}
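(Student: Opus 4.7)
The plan is to exploit the hypothesis $f(0)=0$ to factor an $x$ out of $f(x)$, after which the leading coefficient condition makes the remaining factor a unit in $\Q\lbr x\rbr$, reducing the divisibility to a trivial one.

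First I would write $f(x) = x\,h(x)$ with $h(x)\in\Q\lbr x\rbr$ satisfying $h(0)\neq 0$ (by hypothesis on the coefficient of $x$). Since $h(0)\in\Q^\times$, the series $h(x)$ is invertible in $\Q\lbr x\rbr$, hence also in $\Q\lbr x,y\rbr$. With this substitution the numerator becomes
\[
xf(y)-yf(x) \;=\; x\cdot y\,h(y) \;-\; y\cdot x\,h(x) \;=\; xy\bigl(h(y)-h(x)\bigr),
\]
while the proposed divisor is
\[
f(x)(x-y) \;=\; x\,h(x)(x-y).
\]

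Next I would use the standard fact (the same one used implicitly in Lemma~\ref{lem:fgl1}) that for any $\varphi(x)\in\Q\lbr x\rbr$, the bivariate series $\varphi(y)-\varphi(x)$ is divisible by $(x-y)$ in $\Q\lbr x,y\rbr$; indeed writing $\varphi(x)=\sum a_i x^i$ gives $\varphi(y)-\varphi(x)=\sum a_i(y^i-x^i)$, and each $y^i-x^i$ is manifestly divisible by $y-x$. Applying this to $h$, I get $h(y)-h(x) = (y-x)\,k(x,y)$ for some $k(x,y)\in\Q\lbr x,y\rbr$.

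Substituting back yields
\[
xf(y)-yf(x) \;=\; -xy(x-y)\,k(x,y),
\]
so the quotient by $f(x)(x-y)=x\,h(x)(x-y)$ equals $-y\,k(x,y)/h(x)$, which lies in $\Q\lbr x,y\rbr$ because $h(x)$ is a unit there. There is no genuine obstacle in this argument; the only subtlety is ensuring that $h$ being a unit in $\Q\lbr x\rbr$ makes it a unit in $\Q\lbr x,y\rbr$ as well, which follows from the inclusion $\Q\lbr x\rbr\subset\Q\lbr x,y\rbr$ and the fact that a power series with invertible constant term in $\Q\lbr x,y\rbr$ remains invertible.
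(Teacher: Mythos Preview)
Your proof is correct. Both your argument and the paper's begin by noting that $h(x)=f(x)/x$ is a unit in $\Q\lbr x\rbr$, which reduces the claim to showing that $x(x-y)$ divides $g(x,y):=xf(y)-yf(x)$. From there the two arguments diverge slightly: the paper proceeds abstractly, observing that $x$ and $x-y$ are coprime in $\Q\lbr x,y\rbr$ and then checking $g(0,y)=g(x,x)=0$ to obtain divisibility by each factor separately; you instead exhibit the factorization explicitly, computing $g=xy\bigl(h(y)-h(x)\bigr)=-xy(x-y)k(x,y)$ and then dividing through. Your route is marginally more constructive and sidesteps the appeal to coprimality in the power series ring, at the cost of writing out the difference-quotient factorization of $h(y)-h(x)$; the paper's is a line shorter but leans on the UFD structure. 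Either works.
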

\begin{proof}
By assumption $\frac{f(x)}{x}$ is invertible in $\Q\lbr x\rbr$, so it suffices to show that $x(x-y)$ divides $g(x,y):=xf(y)-yf(x)$. Since $x$ and $x-y$ are coprime, it suffices to show that both $x$ and $x-y$ divide $g(x,y)$. But this follows since $g(0,x)=g(x,x)=0.$
\end{proof}
\begin{theorem}\label{prop:FAHAQIso}
Assume $R$ is a $\Q$-algebra, and $\fl(t)\in R[\![t]\!]$ is such that $F(x,y)=\fl^{-1}(\fl( x)+\fl (y))$. Then, there is an isomorphism
\[
\widetilde \phi_{F,F_a}:\HF\to \mathbf{H}_{F_a}, ~T^F_i\mapsto \frac{\fl(x_\ga)}{\fl(x_i)}+\frac{\fl(x_i)-\fl(x_\ga)}{\fl(x_i)}\de_i.
\]

\end{theorem}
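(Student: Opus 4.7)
The plan is to obtain $\widetilde{\phi}_{F,F_a}$ as the restriction to $\HF$ of an ambient ring isomorphism $Q^F_W \isom Q^{F_a}_W$, and to verify via the structure theorem (Theorem~\ref{thm:struct}) that its image lies in $\mathbf{H}_{F_a}$.

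The hypothesis $F(x,y)=\fl^{-1}(\fl(x)+\fl(y))$ expresses exactly that $\fl$ is an isomorphism of formal group laws between $(R,F)$ and $(R,F_a)$. By the functoriality explained in \S\ref{subsec:fga}, this yields a ring isomorphism of formal group algebras $R\lbr\Ga\oplus\La\rbr_F\isom R\lbr\Ga\oplus\La\rbr_{F_a}$ characterized on generators by $x_\la\mapsto \fl(x_\la)$. Because $R\supset\Q$, we have $\fl(0)=0$ and $\fl'(0)\in R^\times$, so $\fl(x_\al)/x_\al$ is a unit in $S_{F_a}$ for every root $\al$; the isomorphism therefore extends canonically to $Q^F\isom Q^{F_a}$, and then to $\widetilde{\phi}_{F,F_a}:Q^F_W\isom Q^{F_a}_W$ by $u\de_w\mapsto \widetilde{\phi}_{F,F_a}(u)\de_w$. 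Applied to $T^F_i=\frac{x_\ga}{x_i}+\frac{x_i-x_\ga}{x_i}\de_i$, this yields the formula in the theorem.

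To see $\widetilde{\phi}_{F,F_a}(\HF)\subseteq \mathbf{H}_{F_a}$, I would observe that $\widetilde{\phi}_{F,F_a}$ is $W$-equivariant (since $\fl$ is applied coordinate-wise and $W$ acts by permuting the $x_\la$), hence intertwines the actions of $Q^F_W$ on $Q^F$ and of $Q^{F_a}_W$ on $Q^{F_a}$. Since $T^F_i$ preserves $S_F$ and $\widetilde{\phi}_{F,F_a}$ maps $S_F$ isomorphically onto $S_{F_a}$, this gives $\widetilde{\phi}_{F,F_a}(T^F_i)\cdot S_{F_a}\subseteq S_{F_a}$. Lemma~\ref{lem:deltaT} ensures that $\widetilde{\phi}_{F,F_a}(T^F_i)\in Q^{F_a}_W$ is automatically a $Q^{F_a}$-linear combination of the basis $\{T^{F_a}_{I_w}\}_{w\in W}$. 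Since $R\supset\Q$ implies Assumption~\ref{assump:strong}, Theorem~\ref{thm:struct}(\ref{thm:structHecke}) then gives $\widetilde{\phi}_{F,F_a}(T^F_i)\in \mathbf{H}_{F_a}$; combined with $\widetilde{\phi}_{F,F_a}(S_F)=S_{F_a}$ and the generation of $\HF$ over $R$ by $S_F$ and $\{T^F_i\}$ (Corollary~\ref{cor:struct2}(3)), this yields the desired inclusion.

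The symmetric argument applied to the inverse ambient isomorphism $\widetilde{\phi}_{F,F_a}^{-1}$ (induced by $\fl^{-1}$, which is itself an isomorphism between $F_a$ and $F$ in the opposite direction) gives $\widetilde{\phi}_{F,F_a}^{-1}(\mathbf{H}_{F_a})\subseteq \HF$, so $\widetilde{\phi}_{F,F_a}$ restricts to the claimed ring isomorphism. The main subtlety is the action-intertwining observation that reduces the verification to the structure theorem; an alternative direct approach of writing $\widetilde{\phi}_{F,F_a}(T^F_i)=a_i+b_i T^{F_a}_i$ with $a_i,b_i\in S_{F_a}$ would require a more delicate divisibility computation using that $\fl(x)/x$ is a unit and that $x-y$ divides $\fl(x)-\fl(y)$.
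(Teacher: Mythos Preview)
Your setup is correct: the ambient ring isomorphism $Q^F_W\isom Q^{F_a}_W$ is well-defined, $W$-equivariant, and intertwines the actions on $S_F$ and $S_{F_a}$, so indeed $\widetilde\phi_{F,F_a}(T^F_i)\cdot S_{F_a}\subseteq S_{F_a}$. But there is a genuine gap at the invocation of Lemma~\ref{lem:deltaT}. That lemma expresses $\de_w$ as a combination of the $T_{I_v}$ with coefficients in $S[\frac{1}{\tilde c_{w_0}}]$, \emph{not} in $Q^F$; the factors $x_\al-x_\ga$ of $\tilde c_{w_0}$ are not units in $Q^{F_a}$. So from $\widetilde\phi_{F,F_a}(T^F_i)\in Q^{F_a}_W$ you obtain only a $Q^{F_a}[\frac{1}{\tilde c_{w_0}}]$-linear combination of the $T^{F_a}_{I_w}$, and Theorem~\ref{thm:struct}(\ref{thm:structHecke}) does not apply. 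What your argument actually establishes is $\widetilde\phi_{F,F_a}(T^F_i)\in\mathbf{D}_{F_a}$ via part~(\ref{prop:structDem}); but $\mathbf{D}_{F_a}\supsetneq\mathbf{H}_{F_a}$, and the distinction between the two parts of the structure theorem is precisely the hypothesis you are missing.

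Closing this gap requires verifying that the coefficients in the $T^{F_a}_i$-expansion lie in $S_{F_a}$, which is exactly the ``alternative direct approach'' you mention at the end. The paper takes this route: it writes
\[
\widetilde\phi_{F,F_a}(T^F_i)=\frac{x_i}{\fl(x_i)}\cdot\frac{\fl(x_i)-\fl(x_\ga)}{x_i-x_\ga}\,T^{F_a}_i+\frac{\fl(x_\ga)\,x_i-x_\ga\,\fl(x_i)}{\fl(x_i)(x_i-x_\ga)}
\]
and then checks that both coefficients belong to $S_{F_a}$, using that $x/\fl(x)$ is a unit (Lemma~\ref{lem:fgl1}) and that $\fl(x)(x-y)$ divides $x\fl(y)-y\fl(x)$ (Lemma~\ref{lem:fgl2}). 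The inverse is handled symmetrically via $\fl^{-1}$, as you indicate. So your ``alternative'' is in fact the required argument, and the structure-theorem route cannot bypass it.
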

The inverse $\widetilde\phi_{F_a,F}$ of $\widetilde\phi_{F,F_a}$ is determined by $\phi_{F_a,F}$ in \S~\ref{sec:FAHA}.
\begin{proof}Firstly, we  show that $\widetilde \phi_{F,F_a}(T_i^F)\in \mathbf{H}_{F_a}$. For simplicity, denote $\fl_\ga=\fl(x_\ga)$ and $\fl_i=\fl(x_i)$, then direct computation shows that
\[
\widetilde \psi_{F,F_a}(T_i^F)=\frac{\fl_\ga}{\fl_i}+\frac{\fl_i-\fl_\ga}{\fl_i}\de_i=\frac{x_i}{\fl_i}\frac{\fl_i-\fl_\la}{x_i-x_\la}T_i^{F_a}+\frac{\fl_\ga x_i
-x_\ga\fl_i}{\fl_i(x_i-x_\la)}.
\]
By \cite[Ch. IV, \S1]{Fr} we know that the power series $\fl(x)$ has zero constant term and the coefficient  of $x$ is 1. Therefore,  by Lemma  \ref{lem:fgl1} and Lemma \ref{lem:fgl2}, the expressions 
\[\frac{x_i(\fl_i-\fl_\la)}{\fl_i(x_i-x_\la)}~\text{ and }~\frac{\fl_\ga x_i
-x_\ga\fl_i}{\fl_i(x_i-x_\la)}
\] all belong to $R\lbr\Ga\oplus\La\rbr_{F_a}$. So $\widetilde \psi_{F,F_a}:\HF\to \mathbf{H}_{F_a}$ is well defined. Moreover, it is not difficult to see that the map $\widetilde \phi_{F_a,F}$ mapping $T^{F_a}_i$ to $\frac{\fl^{-1}(x_\ga)}{\fl^{-1}(x_{_i})}+\frac{\fl^{-1}(x_{_i})-\fl^{-1}(x_{\ga})}{\fl^{-1}(x_i)}\de_i\in Q^F_W$ is the inverse of $\widetilde \psi_{F,F_a}$, hence the conclusion follows. \end{proof}

\subsection{Filtration on the formal affine Hecke algebra}\label{subsec:PBW}
 Let $\IF\subset S$ be the kernel of the augmentation map $S\to R$,  $x_\la\mapsto 0$ for any $\la\in \Ga\oplus\La$, (note this is different from the $\tilde{\IF}$ in the proof of Theorem \ref{thm:structHecke}) and let $\Gr S=\oplus_{i\ge 0}\IF^i/\IF^{i+1}\cong S_R^*(\Ga\oplus\La)$. We define a filtration of $\HF$ as follows: $\HF^{j}\subseteq \HF$ is the $R$-submodule spanned by $uT_{I}$ such that $u\in \IF^j$. By \cite[Lemma 5.3]{CPZ} we know that $\Dem_i(\IF^j)\subseteq \IF^{j-1}$. Therefore, we have  $T_i\cdot \IF^j\subseteq \IF^j$. Therefore, $\HF^j$ is the set of $z\in \HF$ such that $z\cdot (\IF^k)\subset \IF^{j+k}$ for all $k\ge 0$. We have the following filtration 
\[
\HF=\HF^0\supsetneq \HF^1\supsetneq \HF^2\supsetneq \cdots.
\] 
\begin{lemma} \label{lem:filprod}
For any $j,j'\ge 0$, we have $\HF^j\cdot \HF^{j'}\subseteq \HF^{j+j'}.$
\end{lemma}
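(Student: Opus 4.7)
The plan is to work directly with the spanning description of $\HF^j$, namely that it is the $R$-module spanned by elements $uT_I$ with $u\in\IF^j$ and $I$ an arbitrary sequence of simple reflection indices. By $R$-bilinearity it suffices to check the inclusion on a pair of such generators $u_1 T_{I_1}\in\HF^j$ and $u_2 T_{I_2}\in\HF^{j'}$.

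The first step is a sublemma: $T_i\cdot\HF^{j'}\subseteq\HF^{j'}$ for every simple index $i$. For a generator $uT_I\in\HF^{j'}$ with $u\in\IF^{j'}$, the Leibniz-type identity $T_i u=s_i(u)T_i+x_\ga\Dem_i(u)$ from Lemma~\ref{lem:XYTproperty}.(5) gives
\[
T_i(uT_I)=s_i(u)\,T_{(i,I)}+x_\ga\Dem_i(u)\,T_I.
\]
Since $s_i$ acts by permuting the generators $x_\la$ and hence preserves every $\IF^k$, and since $\Dem_i(\IF^{j'})\subseteq\IF^{j'-1}$ (already invoked in the set-up, citing \cite[Lemma 5.3]{CPZ}), both coefficients $s_i(u)$ and $x_\ga\Dem_i(u)$ again lie in $\IF^{j'}$ (using $x_\ga\in\IF$; for $j'=0$ just note $\IF\subseteq S=\IF^0$). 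Hence each term on the right is a generator of $\HF^{j'}$.

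A short induction on the length of the sequence $I_1$ then promotes this to $T_{I_1}\cdot\HF^{j'}\subseteq\HF^{j'}$. Applying it to $u_2 T_{I_2}$, I obtain an expansion $T_{I_1}(u_2 T_{I_2})=\sum_k v_k T_{L_k}$ with $v_k\in\IF^{j'}$. Left-multiplying by $u_1\in\IF^j$ produces coefficients $u_1 v_k\in\IF^j\cdot\IF^{j'}\subseteq\IF^{j+j'}$, so $(u_1 T_{I_1})\cdot(u_2 T_{I_2})\in\HF^{j+j'}$, and $R$-bilinearity finishes the proof.

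There is no serious obstacle here. The only subtlety worth flagging is that the concatenated index sequence $(i,I)$ need not be reduced, but this is harmless because the spanning description of $\HF^{j'}$ admits arbitrary (not necessarily reduced) index sequences, so no conversion is needed for the filtration bound. If one later wants everything in the reduced-$T_{I_w}$ basis from Corollary~\ref{cor:struct2}, that re-expansion can be carried out afterwards without disturbing the degree estimate just obtained.
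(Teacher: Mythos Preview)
Your proof is correct and follows essentially the same approach as the paper: both use the identity $T_i u = s_i(u)T_i + x_\ga \Dem_i(u)$ from Lemma~\ref{lem:XYTproperty}(5), together with $s_i(\IF^j)=\IF^j$ and $x_\ga\Dem_i(\IF^j)\subseteq\IF^j$, to argue that commuting a factor of $S$ past a $T_i$ does not lower its filtration degree. Your version simply spells out the induction and the case $j'=0$ more carefully than the paper's one-line sketch.
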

\begin{proof}By Lemma \ref{lem:XYTproperty}.(5), we know that $T_iu=s_i(u)T_i+x_\ga \Dem_i(u)$. Since $\Dem_i(\IF^j)\subseteq \IF^{j-1}$ and $s_i(\IF^j)=\IF^j$, so moving an element of $S$ from the right of $T_i$ to the left of $T_i$  will not change the degree of this element. Hence, if $\deg u=j$, $\deg u'=j'$, then $uT_Iu'T_{I'}\in \HF^{j+j'}$ for any $I,I'$, so the conclusion follows.
\end{proof}
 Denote $\HF^{(i)}=\HF^i/\HF^{i+1}$ and let $\Gr\HF=\oplus_{i\ge 0}\HF^{(i)}$, then by definition $\Gr\HF$ is automatically a $\Gr S$-module, and by Lemma \ref{lem:filprod} it is a ring as well. Moreover, $\HF^j\cdot \IF^k\subseteq\IF^{k+j}$. Therefore, $\Gr\HF$ acts on $\Gr S$.  The following theorem generalizes \cite[Proposition 12.3]{Gin}:
\begin{theorem} [The PBW property] \label{thm:filgrade} If $R\lbr\La\rbr_F$ is $\Sigma$-regular,  then there is an isomorphism of graded rings
\[
\Gr\HF\cong R\otimes _\Z H_{deg},
\]
which intertwines their actions on $\Gr S\cong S^*_R(\Ga\oplus\La)$  after identifying $\ep$ with $\la$ as in Proposition \ref{prop:HFa}.
\end{theorem}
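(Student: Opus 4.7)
The plan is to define an $R$-algebra homomorphism
\[
\psi\colon R\otimes_{\mathbb{Z}}H_{deg}\to \Gr\HF,\quad \theta_i\mapsto \bar T_i,\ \la\in\La\mapsto \overline{x_\la},\ \ep\mapsto \overline{x_\ga},
\]
where the bars denote classes in the appropriate graded pieces (so $\bar T_i\in\HF^{(0)}$ while $\overline{x_\la},\overline{x_\ga}\in\HF^{(1)}$), and then to check it is a graded isomorphism intertwining the natural actions on $\Gr S\cong S^*_R(\Ga\oplus\La)$.

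First I would verify that $\psi$ respects the three defining relations of $H_{deg}$. For the quadratic relation, Lemma~\ref{lem:XYTproperty}(6) gives $T_i^2-1=x_\ga\ka_i(T_i-1)\in\HF^1$ since $x_\ga\in\IF$, so $\bar T_i^2=1$ in $\HF^{(0)}$. For the commutation relation, Lemma~\ref{lem:XYTproperty}(5) yields $T_ix_\la-x_{s_i(\la)}T_i=x_\ga\Dem_i(x_\la)$, all in $\HF^1$; since $\Dem_i(x_\la)\in S$ has class $\langle\la,\al_i^\vee\rangle$ in $\IF^0/\IF^1=R$, the image of the right-hand side in $\HF^{(1)}$ is $\ga\langle\la,\al_i^\vee\rangle$. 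This yields $\bar T_i\cdot\la-s_i(\la)\cdot\bar T_i=\ga\langle\la,\al_i^\vee\rangle$, matching \eqref{eq:Hdeg} after identifying $\ep=\ga$. For the braid relations, Theorem~\ref{thm:braid}(3) exhibits a braid error of the form $\sum_{\ell(v)<m_{ij}}u_v^{ij}T_{I_v}$ with $u_v^{ij}\in S$; the specialization $x_\ga=0$ collapses $T_i=x_\ga X_i+\de_i$ to $\de_i$, whence the classical braid relations in $W$ together with the $S$-basis $\{T_{I_v}\}$ (Corollary~\ref{cor:struct2}(1)) force $u_v^{ij}|_{x_\ga=0}=0$, i.e., $u_v^{ij}\in(x_\ga)\subseteq\IF$. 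Thus the braid error lies in $\HF^1$, and the braid relations hold for the $\bar T_i$'s in $\HF^{(0)}$.

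To finish, surjectivity of $\psi$ is clear because $\Gr\HF$ is generated as an $R$-algebra by $\Gr S$ and $\{\bar T_i\}$. For injectivity I would use the PBW basis: from Corollary~\ref{cor:struct2}(1), $\HF=\bigoplus_{w\in W}S\cdot T_{I_w}$, and since every non-reduced $T_I$ is an $S$-linear combination of the $T_{I_w}$'s, the filtration matches this decomposition: $\HF^j=\bigoplus_{w}\IF^j\cdot T_{I_w}$. Taking associated graded gives $\Gr\HF=\bigoplus_{w}\Gr S\cdot\bar T_{I_w}$, and the braid relations proved above make $\bar T_{I_w}$ independent of the chosen reduced sequence, so $\psi(\theta_w)=\bar T_{I_w}$ is well-defined. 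Since $\{\theta_w\}$ is a basis of $R\otimes_{\mathbb{Z}}H_{deg}$ over $S^*_R(\Ga\oplus\La)\cong\Gr S$, $\psi$ sends a basis to a basis and is an isomorphism. Compatibility of the actions is then verified on generators: $T_i\cdot x_\la=x_\ga\Dem_i(x_\la)+x_{s_i(\la)}$ has leading term $\ga\langle\la,\al_i^\vee\rangle+s_i(\la)$ in $\Gr^1 S$, exactly $\theta_i\cdot\la$ under $\ep\mapsto\ga$. The principal subtlety is the divisibility $u_v^{ij}\in(x_\ga)$, which is handled by the $x_\ga=0$ specialization above.
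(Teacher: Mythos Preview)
Your proof is correct and follows essentially the same approach as the paper's: both check that the classes $\bar T_i$ satisfy the defining relations of $H_{deg}$ (quadratic, commutation, braid) in $\Gr\HF$ and then conclude the isomorphism via the $S$-basis $\{T_{I_w}\}$. The only minor difference is in the braid step: the paper asserts by direct computation that each $u_v^{ij}$ is divisible by $x_\ga^2$, whereas your specialization $x_\ga=0$ (reducing $T_i$ to $\de_i$) yields only $u_v^{ij}\in(x_\ga)$ --- weaker, but already sufficient and arguably more transparent.
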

\begin{proof}Let $\widetilde T_i$ denote the image of $T_i$ in $\Gr \HF$. Since $\HF$ is generated by $S$ and $T_i, i\in [n]$, the algebra $\Gr\HF$ is generated by $\Gr S$ and $\widetilde T_i, i\in [n]$, subject to the same relations (after reducing to the associated quotient). The relation in Lemma \ref{lem:XYTproperty}.(6) reduces to $\widetilde T_i^2=1$. For $u=x_\la$, the relation in Lemma \ref{lem:XYTproperty}.(5) reduces to $\widetilde T_i \la-s_i(\la)\widetilde T_i=\ga \widetilde\Dem_i(\la)$, where $\widetilde\Dem_i$ is the induced action of $X_\al$ on $\Gr S$. By \cite[Proposition 4.4]{CPZ} we know that $\widetilde\Dem_i(\la)=\Dem^{F_a}_i(\la)=\frac{\la-s_i(\la)}{\al_i}=\angl{\la, \al^\vee_i}$, hence $\widetilde T_i \la-s_i(\la)\widetilde T_i=\ga \angl{\la,\al_i^\vee}$.  By direct computation, all the coefficients $u_v^{ij}$ appeared  in \eqref{lem:braid} are divisible by $x_{\ga}^2$. Therefore, the right hand side of \eqref{lem:braid} belongs to $\HF^1$. But the left hand side belongs to $\HF^{0}$ and hence \eqref{lem:braid} induces braid relations, i.e., $(\widetilde T_i \widetilde T_j)^{m_{ij}}=1$. So we conclude that $\Gr\HF$ satisfies all the defining relations of $H_{deg}$, which means  $\Gr \HF\cong R\otimes _\Z H_{deg}$. It is straightforward to show that the action of $\tilde T_i$ coincide with that of $\theta_i$.
\end{proof}

\begin{example}\label{ex:HFdeg0}It is not difficult to see that in $\HF/(x_\ga)$, $T_i$ is identified with $\de_i$. Therefore we have $\HF/(x_\ga)\cong R\lbr\La\rbr_F\rtimes_RR[W].$ 
\end{example}

\section{Convolution in equivariant oriented cohomology theories}\label{sec:prelim_coh}
In this section we collect basic notions about equivariant oriented cohomology theory. 
For any equivariant oriented cohomology theory, we will also explain the construction of a convolution algebra and its representations.

\subsection{Equivariant oriented cohomology theories}
In this subsection, following the setting of \cite[\S2]{CZZ3}, we  recall the axioms defining the equivariant oriented cohomology theories, the examples of which are studied in \cite{Des}, \cite{EG}, \cite{HM},  \cite{Kr}, \cite{LM},  \cite{PaninSmirnovII}, \cite{Th} and \cite{To}. 

Let  $H$ be a linear algebraic group over $k$. Let $\Sm^H_k$ be the category of smooth quasi-projective varieties with an $H$-action, and the morphisms are also $H$-equivariant.  Let $\pt=\Spec (k)$ be the point. An equivariant oriented cohomology theory over $k$ is the following data: 
\begin{itemize}
\item[(a)] For any linear algebraic group $H$, a contravariant functor $\orH$ from $\Sm_k^H$ to the category of commutative rings with units;
\item[(b)] for each projective morphism $f:X\to Y$, there is a push-forward morphism $f_*:\orH(X)\to \orH(Y)$, satisfying the projection formula;
\item[(c)] for any morphism of smooth linear algebraic groups $H'\to H$, there is a natural transformation $\orH\to A_{H'}\circ Res$, where $Res:\Sm^H_k\to \Sm^{H'}_k $ is the canonical restriction; 
\item[(d)] a natural transformation $c^H:K_H\to \widetilde \orH$, where for any $X\in \Sm^H_k$, $K_H(X)$ is the Grothendieck group of $H$-equivariant vector bundles on $X$, and $\widetilde \orH(X)=\orH(X)\lbr t\rbr$ with the multiplicative structure. 
\end{itemize}
This data should satisfy conditions \textbf{A}1--\textbf{A}9 in \cite[\S2]{CZZ3}, which we recall as follows:
\begin{ax}[Compatibility for push-forwards] Push-forward is compatible with compositions of morphisms and commutes with pull-backs in transversal squares.
\end{ax}
\begin{ax}[Compatibility for restrictions]
Restriction is compatible with compositions of morphisms of groups and commutes with push-forwards.
\end{ax}
\begin{ax}[Localization sequence] For any smooth closed $H$-subvariety $i:Z\hookrightarrow X$  with $H$-equivariant open complement $j:U\to X$, the following sequence is exact:
\[
\orH(Z)\overset{i_*}\to \orH(X) \overset{j^*}\to \orH(U)\to 0.
\]
\end{ax}
\begin{ax}[Homotopy invariance] For any projection $p:X\times_k \bbA^n\to X$ in $\Sm^H_k$ such that $H$ acts on $\bbA^n$ linearly, the pull-back $p^*$ is an isomorphism.
\end{ax}
\begin{ax}[Normalization] For any regular embedding $i:D\to X$ of codimension 1 in $\Sm^H_k$, we have $c_1^H(\calO(D))=i_*(1)\in \orH(X)$.
\end{ax}
\begin{ax}[Torsors] Let $p:X\to Y$ be a morphism in $\Sm^H_k$. Let $H'\subset H$ be a closed normal subgroup such that $H'$ acts trivially on $Y$ and $p:X\to Y$ is an $H'$-torsor. Denote $q:H \to H/H'$, then the composition $A_{H/H'}(Y)\overset{Res_q}\longrightarrow A_H(Y)\overset{p^*}\longrightarrow A_H(X)$ is an isomorphism.
\end{ax}
\begin{ax}If $H=\{1\}$, $A_{1}$ is an oriented cohomology theory in the sense of \cite[Definition 1.1.2]{LM}.
\end{ax}
\begin{ax}[Self-intersection formula] Let $i:Y\to X$ be a regular embedding of codimension $d$ in $\Sm^H_k$, then the normal bundle $\calN_{Y/X}$ is $H$-equivariant and we have $i^*i_*(1)=c_d^H(\calN_{Y/X})$.
\end{ax}
\begin{ax}[Quillen formula]  If $\calL_1,\calL_2$ are two line bundles on $X$, we have 
\[
c_1(\calL_1\otimes \calL_2)=F(c_1(\calL_1), c_1(\calL_2))
\]
where $F$ is the formal group law over $A_{\{1\}}(\pt)$ associated to $A:=A_{\{1\}}$.
\end{ax}

 For simplicity, we denote $R=A(\pt)$, and $c_1=c_1^H$ for any group $H$. The equivariant Chow ring determines the additive formal group law $F_a$, the equivariant K-theory determines the multiplicative formal group law $F_m$, and the equivariant algebraic cobordism determines the universal formal group law $F_{\Laz}$ over the Lazard ring $\Laz$.

Following \cite[Definition 2.1]{CZZ3}, we also assume that the equivariant oriented cohomology is {\it Chern- complete over the point}, that is, $\orH(\pt)$ is separated and complete with respect to the $\gamma$-filtration defined by $c_1.$ Note that the examples of equivariant oriented cohomology theories constructed using Totaro's process are all Chern-complete over the point, for instance, the equivariant Chow ring $\ch_H$ by Edidin and Graham \cite{EG}, and the equivariant algebraic cobordism $\MGL_H$ in \cite{Des}, \cite{HM}, \cite{Kr}. The equivariant K-theory of Thomason \cite{Th} is not Chern-complete but it will be after completion at the $\gamma$-filtration. 

\begin{example}\label{ex:equ_cob_tens}
Let $k$ be a field of characteristic zero, and let $\Omega(-)$ be the algebraic cobordism of Levine-Morel.  Following \cite{To}, in \cite{HM} it has been explained how $\Omega$ extends  to an equivariant cohomology theory.  In particular, for any $X\in \Sm_k^H$, Totaro's construction produces a commutative ring $\Omega_H(X)$, together with a filtration on it.

For any formal group law $(R,F)$, one can define an equivariant oriented cohomology theory, sending any $H$-variety $X$ to $\Omega_H(X)\otimes_{\Laz}R$ where the map $\Laz\to R$ is the classifying map induced by the formal group law $F$.
The filtration on $\Omega_H(X)$ via Totaro's construction induces a filtration on $\Omega_H(X)\otimes_{\Laz}R$.
The completion of  $\Omega_H(X)\otimes_{\Laz}R$ with respect to this filtration is denoted by $\Omega_H(X)\hat{\otimes}_{\Laz}R$.
The pull-back, push-forward, and restriction of groups are defined in the naive way. It is clear that all the axioms of equivariant oriented cohomology theories are satisfied. In other words, any formal group law $(R,F)$ gives rise to an equivariant oriented cohomology theory in this way, which is automatically Chern complete. 
\end{example}

There is a nice formula for push-forwards  from the total space of a projective bundle to the base, which in the generality as stated below is due to Quillen. Specializing to the ordinary $K$-theory, this formula becomes the familiar Weyl character formula. Therefore, the general push-forward formula will be referred to as the {\em Quillen-Weyl character formula}.

\begin{lemma}\cite[Proposition 5.30]{Vish}
Let $A$ be an equivariant oriented cohomology theory with associated formal group law $(R,F)$. Let $X$ belong to $\Sm^H_k$, $V$ be some $n$-dimensional $H$-equivariant vector bundle on $X$, and $\pi: \PP_X(V) \to X$ be the corresponding projective bundle. Let $f(t) \in \orH(X)[\![t]\!]$. Then,
\begin{equation}\label{Quill-formula}
\pi_*(f(c_1(\sO(1))))=\sum_i\frac{f(-_{F}\lambda_i)}{\prod_{j\neq i}(\lambda_j-_{F}\lambda_i)},
\end{equation}
where $\lambda_i$ are $c_1$ of the Chern roots of $V$.
\end{lemma}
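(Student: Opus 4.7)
The plan is to reduce to the split case via the splitting principle and then apply a formal Lagrange interpolation identity. Let $p:Y\to X$ denote the complete flag bundle of $V$, so that $p^*V\cong L_1\oplus\cdots\oplus L_n$ with $c_1(L_i)=\lambda_i$. The projective bundle theorem (a consequence of axioms \textbf{A}4, \textbf{A}8, \textbf{A}9) makes $p^*:\orH(X)\to\orH(Y)$ injective, and since the formation of $\PP_X(V)$ commutes with base change while push-forwards commute with transversal pullbacks (axiom \textbf{A}1), it suffices to verify the formula after replacing $X$ by $Y$. Hence I may assume $V=L_1\oplus\cdots\oplus L_n$ from the outset.

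In this split situation, write $\xi=c_1(\sO(1))$ on $\PP_X(V)$ and consider, for each $i$, the section $s_i:X\hookrightarrow\PP_X(V)$ induced by the sub-line-bundle $L_i\subset V$. Since $s_i^*\sO(-1)=L_i$, one has $s_i^*\xi=-_F\lambda_i$. Next, the composition $\sO(-1)\hookrightarrow\pi^*V\twoheadrightarrow\pi^*L_j$ defines a canonical section of $\pi^*L_j\otimes\sO(1)$ whose zero locus $D_j$ is a smooth divisor of class $\lambda_j+_F\xi$ by axioms \textbf{A}5 and \textbf{A}9. The subvariety $s_i(X)$ coincides with the transversal intersection $\bigcap_{j\neq i}D_j$, so iterated self-intersection (axiom \textbf{A}8) yields
$$s_{i*}(1)=\prod_{j\neq i}(\lambda_j+_F\xi).$$
Moreover, the common locus $\bigcap_{j=1}^n D_j$ is empty because the tautological sub-line-bundle cannot vanish into every summand $\pi^*L_j$ simultaneously, which furnishes the fundamental relation
$$\prod_{j=1}^n(\lambda_j+_F\xi)=0\quad\text{in}\quad\orH(\PP_X(V)).$$

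It remains to perform a formal Lagrange interpolation. Modulo the ideal generated by $\prod_k(\lambda_k+_F\xi)$ I claim the identity
$$f(\xi)=\sum_{i=1}^n\frac{f(-_F\lambda_i)}{\prod_{j\neq i}(\lambda_j-_F\lambda_i)}\cdot\prod_{j\neq i}(\lambda_j+_F\xi).$$
To verify this, substitute $\xi=-_F\lambda_k$ for each $k$: when $i\neq k$, the factor at $j=k$ inside $\prod_{j\neq i}(\lambda_j+_F\xi)$ becomes $\lambda_k+_F(-_F\lambda_k)=0$, so those terms vanish, and the $i=k$ term reduces to $f(-_F\lambda_k)$. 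Recognizing $s_{i*}(1)$ inside the product and applying $\pi_*$, together with $\orH(X)$-linearity and the identity $\pi_*(s_{i*}(1))=1$, produces the asserted formula once one observes $\lambda_j+_F(-_F\lambda_i)=\lambda_j-_F\lambda_i$.

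The subtlest point is the interpolation step: one must ensure the denominators $\lambda_j-_F\lambda_i$ for $i\neq j$ are non-zero-divisors so that the formula is interpreted in an appropriate localization of $\orH(X)$, and one should invoke Lemma~\ref{lem:fgl1} to see that $\prod_i\bigl(\xi-(-_F\lambda_i)\bigr)$ and $\prod_i(\lambda_i+_F\xi)$ differ by a unit in $\orH(X)\lbr\xi\rbr$, so that the vanishing of the latter makes the interpolation residually correct. Chern-completeness of $\orH$ ensures that the formal substitution of $\xi$ into the power series $f$ converges term by term in the $\gamma$-filtration, so the argument extends from polynomial $f$ to genuine power series.
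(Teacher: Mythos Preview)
The paper does not supply its own proof of this lemma; it simply cites \cite[Proposition~5.30]{Vish}. Your argument follows the standard route (splitting principle, identification of $s_{i*}(1)$ as a product of divisor classes, and Lagrange interpolation modulo the fundamental relation $\prod_j(\lambda_j+_F\xi)=0$), and is essentially correct.

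Two minor points. First, the identity $s_{i*}(1)=\prod_{j\neq i}(\lambda_j+_F\xi)$ is a consequence of transversal base change (axiom \textbf{A}1) together with normalization (axiom \textbf{A}5), not the self-intersection formula \textbf{A}8: each $D_j$ has class $\lambda_j+_F\xi$ by \textbf{A}5, and the transversality of the $D_j$ lets you multiply these classes by iterated application of \textbf{A}1. Second, the interpolation step --- showing that checking the identity at the $n$ values $\xi=-_F\lambda_k$ forces it to hold in $\orH(\PP_X(V))$ --- genuinely needs the $\lambda_j-_F\lambda_i$ to be non-zero-divisors, which is not automatic for an arbitrary bundle on an arbitrary $X$. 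The clean way to close this gap is to argue universally: work over (an approximation to) $BGL_n$, where the Chern roots are algebraically independent and the localization map is injective, verify the formula there, and then specialize. Your final paragraph gestures at this but would be stronger if it invoked the universal case explicitly rather than leaving it as a remark about ``an appropriate localization''.
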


\subsection{Convolution algebra and its representations}\label{subsec:coh_cob}
Let $M_i$ be  smooth quasi-projective $H$-varieties for $i=1,2,3$, and let $Z_{12}\subseteq M_1\times M_2$ and $Z_{23}\subseteq M_2\times M_3$ be smooth $H$-stable closed subvarieties. Let $\proj_{i,j}:M_1\times M_2\times M_3\to M_i\times M_j$ be the projection. Denote $i:Z_{12}\times_{M_2}Z_{23}\to M_1\times M_2\times M_3$ to be the fiber product of $\proj_{12}^{-1}(Z_{12})\times_{(M_1\times M_2\times M_3)}\proj^{-1}(Z_{23})$; Assume $Z_{12}\times_{M_2}Z_{23}$ is smooth and assume $Z_{13}\subseteq M_1\times M_3$ contains the image of $Z_{12}\times_{M_2}Z_{23}$ under the projection $\proj_{13}:M_1\times M_2\times M_3\to M_1\times M_3$, and $p:Z_{12}\times_{M_2}Z_{23}\to Z_{13}$ is proper. Then we define $*:A_H(Z_{12})\otimes_{A_H(k)}A_H(Z_{23})\to A_H(Z_{13})$ to be \[\mu\otimes\mu'\mapsto p_* i^*(\proj_{12}^*\mu\cdot\proj_{23}^*\mu').\]

\begin{example}\label{ex:conv_sm}
We look at special cases that will be used.
\begin{enumerate}
\item Let $M_i\to M_0$ be projective equivariant morphisms of quasi-projective $H$-varieties with $M_i$ smooth for $i=1,2$. Let $i_Z:Z\to M_1\times_{M_0}M_2$ be an $H$-stable smooth closed subvariety, proper over $M$.
Take $M_3=\pt$ and $Z_{12}=Z$, $Z_{23}=M_2$, and $Z_{13}=M_1$.
Then for any $\eta\in A_H(Z)$ we have an operator $\eta*_Z\in \Hom_R(A_H(M_2),A_H(M_1))$ such that  $\eta*_Z X=\proj_{1*}\left((\proj_2^*\mu)\cdot \eta\right)\in A_H(M_1)$ for any $\mu\in A_H(M_2)$, where $\proj_i:Z\to M_i$ is the projection to the $i$-th coordinate.

\item Let $M_0=\Spec(k)$, $M_1=M_2=M_3=M$ be projective over $k$, and $Z_{12}=Z_{23}=Z_{13}=M\times M$. The above construction endows $A_H(M\times M)$ an associated product $*_{M\times M}$, called the convolution product. This product will also be denoted simply by $*$ if $M$ is clear from the context.

\item When $M$ is smooth and projective over $k$, there is a well-defined $R$-module homomorphism $A_H(M\times M)\to\End_R(M)$, sending $\eta$ to the operator $\eta*_{M\times M}$. 
\end{enumerate}
\end{example} 

We would like to define a convolution algebra structure on $A_H(M\times_{M_0}M)$, and make sense of its representation on $A(M_x)$ where $M_x$ is the fiber of the map $M\to M_0$ over $x\in M_0$. 
Unfortunately, the varieties $M\times_{M_0}M$ and $M_x$ are in general singular, although they could be smooth in special cases. For arbitrary equivariant oriented cohomology theory, {\it a priori} $A_H(M\times_{M_0}M)$ and $A(M_x)$ are not well-defined. 
We need to restrict ourselves onto some special type of oriented cohomology theories while needed. More precisely, sometimes we need to consider equivariant oriented Borel-Moore homology theories on the category of $H$-schemes of finite type over $k$, which restricts to equivariant oriented cohomology theories on $\Sm^H_k$. (See \cite[Definition~5.1.3]{LM} for the definition of oriented Borel-Moore homology theories  in the non-equivariant case.) 

Recall from \cite[Chapter~2]{LM} that  for any scheme $X$ of finite type (not necessarily smooth), the algebraic cobordism group $\MGL(X)$ is well-defined as a graded abelian group. It enjoys the following properties, proved in \cite[Theorem~7.1.1]{LM}.

\begin{prop}
Assume $k$ has characteristic zero.
\begin{enumerate}
\item For any two schemes $X$ and $Y$, there is a exterior product $\boxtimes:\Omega(X)\times\Omega(Y)\to \Omega(X\times Y)$.
\item For any smooth morphism $f:X\to Y$, or even more generally, local complete intersection morphism, the pull-back $f^*:\Omega(Y)\to \Omega(X)$ is well-defined; 
\item The pull-back has a refined version, i.e., for any local complete intersection morphism $f:X\to Y$ and an arbitrary morphism $Z\to Y$, there is a refined pull-back map $f^!:\Omega(Z\times_YX)\to \Omega(Z)$, which specializes to $f^*$ when $Z\to Y$ is the identity morphism on $Y$.
\item For proper morphism $f:X\to Y$, there is a push-forward $f_*:\Omega(X)\to \Omega(Y)$. Push-forwards is compatible with pull-backs in a sense spelled out in \cite[Definition~5.1.3]{LM}.
\item For any line bundle $\calL$ on $X$, there is a first Chern class operator $\widetilde{c_1}(\calL):\MGL(X)\to \MGL(X)$, which commutes with refined pull-backs. For any two line bundles $\calL$ and $\calM$, the operators $\widetilde{c_1}(\calL)$ and $\widetilde{c_1}(\calM)$ commute. 
\end{enumerate}
\end{prop}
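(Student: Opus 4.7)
The plan is to invoke the fundamental construction of algebraic cobordism in \cite[Chapters~2 and~7]{LM} rather than rebuild the theory from scratch; each of the five listed properties is a piece of the structure established in \cite[Theorem~7.1.1]{LM}, and I would briefly indicate how each appears inside the Levine--Morel framework.

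Recall that $\Omega(X)$ is generated, as an abelian group, by isomorphism classes of cobordism cycles $[Y\to X]$ with $Y$ smooth and the structure morphism projective, modulo the double-point relation. With this description, the exterior product in~(1) is defined on representatives by $[Y\to X]\boxtimes[Y'\to X']:=[Y\times Y'\to X\times X']$, and the proper push-forward in~(4) is defined by composition $f_{*}[Y\to X]:=[Y\to Z]$ when $f\colon X\to Z$ is proper; both operations visibly preserve products of projective/smooth morphisms and descend to $\Omega$ once one checks compatibility with the defining relations.

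The l.c.i.\ pull-back in~(2), the refined pull-back in~(3), and the first Chern class operator in~(5) are the technically substantial parts, and I would import them from \cite[Chapters~5--7]{LM}. The refined pull-back along a regular embedding $i\colon Y\hookrightarrow X$ is constructed via deformation to the normal cone: one specializes a cycle on $Z\to Y$ to the normal bundle $N_{Y/X}$ and then pulls back along the zero section. The general l.c.i.\ case is reduced to this by factoring $f$ as a regular embedding followed by a smooth morphism, the latter admitting a flat pull-back directly on representatives. The operator $\widetilde c_1(\calL)$ is defined using the auxiliary theory $\Omega(X;L_1,\ldots,L_r)$ of cobordism cycles carrying line bundle data, in which line bundles are recorded as unordered decorations on the cycle; pushing forward to $\Omega(X)$ yields the desired operators. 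Their mutual commutativity is then tautological (the decorating line bundles form an unordered multiset), and compatibility with $f^{!}$ comes directly from the construction of deformation to the normal cone.

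The main obstacle, if one were to redo the proofs in detail, would be to check that each of (2), (3), and~(5) respects the double-point relation, since this relation is quite subtle on singular schemes. This is precisely the technical heart of \cite{LM} and the origin of the universality of $\Omega$ among oriented Borel--Moore homology theories. Accordingly, my ``proof'' would be essentially a citation: each item is a special case of the structure theorem \cite[Theorem~7.1.1]{LM}, and only the brief translation between their formalism and the statements above really needs to be supplied.
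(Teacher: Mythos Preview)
Your proposal is correct and matches the paper's approach exactly: the paper does not supply a proof at all but simply states that these properties are ``proved in \cite[Theorem~7.1.1]{LM}'' immediately before the proposition. Your write-up in fact goes further than the paper by sketching how each item arises inside the Levine--Morel framework, whereas the paper treats the proposition purely as a citation.
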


When $X$ is smooth, the diagonal embedding $\Delta:X\to X\times X$ is a local complete intersection morphism. The ring structure on $\Omega(X)$ is obtained from $\Delta^*\circ\boxtimes:\Omega(X)\otimes \Omega(X)\to \Omega(X\times X)\to \Omega(X)$. 

Let $X$ be a smooth variety, so that $\Omega(X)$ has a commutative ring structure. Then for any closed subvarieties $Z_1$ and $Z_2$ of $X$, we can define \begin{equation}\label{eq:cap}\cap:\Omega(Z_1)\otimes_{\Omega(\pt)}\Omega(Z_2)\to \Omega(Z_1\times_XZ_2)
\end{equation} as follows. Let $\Delta:X\to X\times X$ be the diagonal embedding. As $X$ is smooth, this is a local complete intersection morphism. Let  $Z_1\times Z_2\to X\times X$ be the obvious map. Its base change with respect to $\Delta$ is the embedding $Z_1\times_XZ_2\to X$. We define $\cap$ to be the composition of $\boxtimes:\Omega(Z_1)\times\Omega(Z_2)\to \Omega(Z_1\times Z_2)$ and the refined pull-back $\Delta^!:\Omega(Z_1\times Z_2)\to \Omega(Z_1\times_XZ_2)$.

If $X$ is an $H$-scheme of finite type, we define $\Omega_H(X)$ using Totaro's construction in \cite{HM}. There are still well-defined pull-back for equivariant local complete intersection morphisms and push-forward for equivariant proper morphisms, and the first Chern class operator  $\widetilde c_1^H(\calL)$ on $\MGL_H(X)$ is still well-defined for $H$-equivariant line bundle $\calL$ over $X$.
Also, by construction,  $\widetilde c_1(\calL)$ is compatible with respect to restriction of groups. Consequently, for any smooth $H$-variety $X$, and $Z_1$, $Z_2\subseteq X$ two $H$-stable closed subvarities, $\cap:\Omega_H(Z_1)\otimes_{\Omega_H(\pt)}\Omega_H(Z_2)\to \Omega_H(Z_1\times_XZ_2)$ is well-defined.The details are spelled out in  \cite{HM}.

\begin{assumption}\label{assum:running}
We assume $k$ has characteristic zero, and assume the equivariant oriented cohomology theory $A$ satisfies that 
\[
A_H(X)=\MGL_H(X)\hat{\otimes}_{\Laz} R, \quad X\in \Sm_k^H
\]  where $(R,F)$ is the formal group law associated to the cohomology theory $A$. 
\end{assumption}
 
The completed tensor product on the right hand side is explained in Example~\ref{ex:equ_cob_tens}.
Note that this assumption is satisfied if  the underlying ordinary oriented cohomology theory $A$ satisfies that $A(X)=\Omega(X)\otimes_{\Laz} R$, and  the equivariant $A_H(X)$ is obtained from the ordinary theory by Totaro's construction.

For simplicity, in the rest of this section, we concentrate on oriented cohomology theories satisfying this assumption. Note that this family is in one-to-one correspondence with formal group laws, and it includes a lot of examples people have considered, e.g., the Chow ring $\ch$, the (completion of) $K$-theory, and obviously the algebraic cobordism theory itself. For any equivariant oriented cohomology theory satisfying Assumption~\ref{assum:running}, and for any singular $H$-variety $X$, we define $A_H(X)$ simply as $\Omega_H(X)\otimes_{\Laz} R$. The push-forwards and pull-backs are defined in the natural way.
 
Let $M_i$ be smooth quasi-projective $H$-varieties for $i=1,2,3$, and let $Z_{12}\subseteq M_1\times M_2$ and $Z_{23}\subseteq M_2\times M_3$ be $H$-stable closed subvarieties. Note that $Z_{12}$ and $Z_{23}$ are not necessarily smooth, which is different from the case in \S\ref{subsec:coh_cob}. Let $\proj_{i,j}:M_1\times M_2\times M_3\to M_i\times M_j$ be the projection. Denote $Z_{12}\times_{M_2}Z_{23}\to M_1\times M_2\times M_3$ to be the fiber product of $\proj_{12}^{-1}(Z_{12})\times_{(M_1\times M_2\times M_3)}\proj^{-1}(Z_{23})$; and define $Z_{12}\circ Z_{23}\subseteq M_1\times M_3$ to be the image of $Z_{12}\times_{M_2}Z_{23}$ under the projection $\proj_{13}:M_1\times M_2\times M_3\to M_1\times M_3$. Note that in general $Z_{12}\circ Z_{23}$ is not smooth even when both $Z_{12}$ and $Z_{23}$ are.

The following Proposition allows us to consider convolution algebras and their representations. The equivariant $K$-theory version is \cite[2.7.5]{CG}.

\begin{prop}\label{prop:conv_action}
Under Assumption~\ref{assum:running}.
\begin{enumerate}
\item Assume further that the natural map $Z_{12}\times_{M_2}Z_{23}\to Z_{12}\circ Z_{23}$ is proper. Then, there is a well-defined map \[*:A_H(Z_{12})\otimes_{\orH(\pt)} A_H(Z_{23})\to A_H(Z_{12}\circ Z_{23}).\] 
\item The map $*$ is associative in the usual sense. 
\item Suppose $M\to M_0$ is a projective morphism, and let $Z$ be an $H$-stable closed subvariety of $M\times_{M_0}M$. Then, there is a well defined morphism $A_H(Z)\otimes A(M_x)\to A(M_x)$. 
\item In particular, when $Z=M\times_{M_0}M$, the $R$-module homomorphism above extends to a homomorphism of $R$-algebras $A_H(M\times_{M_0} M)\to\End_R(A(M_x))$.
\end{enumerate}
\end{prop}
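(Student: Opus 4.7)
The plan is to set up the convolution via the Borel--Moore homology formalism on equivariant algebraic cobordism $\MGL_H$ recalled just above Assumption~\ref{assum:running}, and then propagate it to $A_H$ via the identification $A_H(X)=\MGL_H(X)\hat{\otimes}_{\Laz}R$. The three operations I will invoke are the exterior product $\boxtimes$, proper push-forward, and, most importantly, the refined pull-back $\Delta^{!}$ along the diagonal embedding $\Delta:M_2\hookrightarrow M_2\times M_2$, which is a regular embedding (hence a local complete intersection morphism) because $M_2$ is smooth.

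For (1), given $\eta_{12}\in A_H(Z_{12})$ and $\eta_{23}\in A_H(Z_{23})$, I set
\[
\eta_{12}*\eta_{23}:=p_{*}\bigl(\Delta^{!}(\eta_{12}\boxtimes\eta_{23})\bigr)\in A_H(Z_{12}\circ Z_{23}),
\]
where $\eta_{12}\boxtimes\eta_{23}\in A_H(Z_{12}\times Z_{23})$, the refined pull-back along $\Delta$ lands in $A_H(Z_{12}\times_{M_2}Z_{23})$, and $p:Z_{12}\times_{M_2}Z_{23}\to Z_{12}\circ Z_{23}$ is the proper map posited in the hypothesis. For (2), I will deduce associativity from the three-fold version: given $\eta_{ij}\in A_H(Z_{ij})$ for $(i,j)\in\{(1,2),(2,3),(3,4)\}$, both iterated convolutions equal the proper push-forward to $A_H(Z_{12}\circ Z_{23}\circ Z_{34})$ of $\Delta_{M_2}^{!}\Delta_{M_3}^{!}(\eta_{12}\boxtimes\eta_{23}\boxtimes\eta_{34})$, where $\Delta_{M_i}:M_i\hookrightarrow M_i\times M_i$ denote the diagonals. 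The two refined pull-backs commute because they act on independent factors, and each commutes with the intermediate proper push-forwards by tor-independent base change in~\cite{LM}; combining these identities gives equality of the two parenthesizations.

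For (3), I specialize to $M_1=M_2=M$, $M_3=\pt$, $Z_{12}=Z$, $Z_{23}=M_x$. Because $Z\subseteq M\times_{M_0}M$, any $(m_1,m_2)\in Z\times_MM_x$ has $m_2\in M_x$, so $m_1$ and $m_2$ both lie over $x\in M_0$ and $m_1\in M_x$; hence $Z\circ M_x\subseteq M_x$. The properness of $Z\times_MM_x\to Z\circ M_x$ reduces to the properness of $M_x$ (a fiber of the projective morphism $M\to M_0$): the map factors through the first projection $M\times M_x\to M$, which is proper. For (4), with $Z=M\times_{M_0}M$, part (1) endows $A_H(Z)$ with a binary product and part (3) gives the pairing $A_H(Z)\otimes A(M_x)\to A(M_x)$; that the resulting map $A_H(Z)\to\End_R(A(M_x))$ is an $R$-algebra homomorphism is precisely the associativity from (2) applied with $M_1=M_2=M_3=M$, $M_4=\pt$, $Z_{12}=Z_{23}=Z$, and $Z_{34}=M_x$.

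The main obstacle I expect is not the convolution construction itself, which is formally parallel to the $K$-theoretic case in~\cite[\S 2.7]{CG}, but the book-keeping required to check that the refined pull-back and the proper push-forward of~\cite{LM} (extended to the equivariant setting in~\cite{HM}) descend coherently to the completed base change $\MGL_H\hat{\otimes}_{\Laz}R=A_H$, and that all the functoriality and base-change identities survive this completion.
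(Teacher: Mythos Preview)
Your proposal is correct and follows essentially the same route as the paper. Both arguments reduce to $\MGL_H$, use the exterior product and a refined diagonal pull-back to land in $A_H(Z_{12}\times_{M_2}Z_{23})$, and then push forward properly; associativity and parts (3)--(4) are handled identically. The only cosmetic difference is that the paper first pulls back along the smooth projections $p_{ij}^{-1}(Z_{ij})\to Z_{ij}$ into $M_1\times M_2\times M_3$ and then applies the $\cap$-product (refined pull-back along $\Delta_{M_1\times M_2\times M_3}$), whereas you apply $\Delta_{M_2}^{!}$ directly to $\eta_{12}\boxtimes\eta_{23}$; these two packagings are equivalent and both standard in the Borel--Moore formalism.
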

\begin{proof}
We only need to prove this when $A$ is the algebraic cobordism $\Omega$.

Although $p_{12}^{-1}(Z_{12})$ may not be a smooth variety, the projection $p_{12}:p_{12}^{-1}(Z_{12})\to Z_{12}$ is a smooth morphism, as $M_3$ is smooth. Therefore, we have a pull-back $p_{12}^*:\Omega_H(Z_{12})\to \Omega_H(p_{12}^{-1}(Z_{12}))$. Similarly, we have $p_{23}^*:\Omega_H(Z_{23})\to \Omega_H(p_{23}^{-1}(Z_{23}))$. Both $p_{12}^{-1}(Z_{12})$ and $p_{23}^{-1}(Z_{23})$ are $H$-stable subvarieties of $M_1\times M_2\times M_3$, which is smooth, therefore, by \eqref{eq:cap}, have $\cap:\Omega_H(p_{12}^{-1}(Z_{12}))\otimes\Omega_H(p_{23}^{-1}(Z_{23}))\to \Omega_H(Z_{12}\times_{M_2}Z_{23})$. The map $*$ we are looking for is the composition of $p_{12}^*\otimes p_{23}^*$, $\cap$ followed by the push-forward induced by $Z_{12}\times_{M_2}Z_{23}\to Z_{12}\circ Z_{23}$. 

The associativity (2) is proved in the usual way. The statements (3), and (4)are consequences of (2).
\end{proof}


\subsection{Bivariant Riemann-Roch}

In this subsection, we still assume  Assumption~\ref{assum:running} and in addition $R\supset \Q$.

Recall from \S\ref{subsec:HFrational} that $\fl(x)\in R\lbr x\rbr$ is the power series defining the isomorphism between $(R,F)$ and $(R,F_a)$. For each $H$-equivariant vector bundle $V$ over $X$ with Chern roots $\calL_1,...,\calL_n$, define the Todd class
\[\Td(V)=\prod_i \frac{c_1(\calL_i)}{\fl(c_1(\calL_i))}.\]

For any $X$ in $\Sm^H_k$, we say a cohomology class $\eta\in \orH(X)$ is {\em of geometric origin}, if there are $H$-equivariant line bundles $\calL_1,\dots,\calL_k$ on $X$ and a power series $f\in R[\![x_1,\dots,x_k]\!]$ such that $\eta=f(\widetilde{c_1}(\calL_1),\dots,\widetilde{c_1}(\calL_k))(1_X)$. 

Recall the equivariant twisted Chow ring $\ch_\fl$ is defined as follows: for any $X\in \Sm^H_k$, define $\ch_{H,l}(X)=\ch_{H}(X)\hat{\otimes}_\ZZ R$; for any  smooth morphism $f:X\to Y$ in $\Sm^H_k$ the pull-back is $f^*:=f^{\ch *}\hat{\otimes}_\ZZ \id_R:\ch_H(Y)\hat{\otimes}_\ZZ R\to \ch_H(X)\hat{\otimes}_\ZZ R$, and for any proper map $f:X\to Y$ in $\Sm^H_k$ the push-forward $f_*:\ch_H(X)\hat{\otimes}_\ZZ R\to \ch_H(Y)\hat{\otimes}_\ZZ R$ sends $c_1^{\ch}(\calL)$ to $f^{\ch}_*\left(\fl(c_1^{\ch}(\calL))\Td(Tf)\right)$ where  $Tf$ is the relative tangent bundle over $X$. For any $X$, the above isomorphism maps  $c_1(\calL)$ to $\fl(c_1^{\ch}(\calL))$ for any equivariant line bundle $\calL$ on $X$. 

\begin{prop}[\cite{{PaninRR}}] \label{prop:RR}
Let $A$ be an equivariant oriented cohomology theory with formal group law $(R,F)$, satisfying Assumption~\ref{assum:running}, and suppose $\Q\subseteq R$. Then there is an isomorphism of equivariant oriented cohomology theories
\[A(\_)\cong \ch_{\fl}(\_).\]
\end{prop}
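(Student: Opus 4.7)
The strategy is to construct a natural isomorphism $\tau_X : \ch_{H,\fl}(X) \to A(X)$ of equivariant oriented cohomology theories. The rationality assumption $\Q \subseteq R$ is essential: it guarantees that $\fl$ is invertible and gives an isomorphism of formal group laws $\fl : (R, F_a) \isom (R, F)$, which in turn relates the underlying classical Chow theory to $A$.

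First, unwinding Assumption~\ref{assum:running}, I have $A(X) = \MGL_H(X) \hat{\otimes}_{\Laz} R$ via the classifying map of $F$. By Levine--Morel combined with Totaro's construction, $\ch_H(X) = \MGL_H(X) \hat{\otimes}_\Laz \ZZ$ via the classifying map $\Laz \to \ZZ$ of $F_a$. Hence, as $R$-modules, $\ch_{H,\fl}(X) = \ch_H(X) \hat{\otimes}_\ZZ R$ and $A(X)$ are both obtained from $\MGL_H(X)$ by tensoring over $\Laz$ with $R$, but via different classifying maps. These two maps become related by the $\Q$-algebra automorphism of $\Laz_\Q$ induced by $\fl$.

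Second, I define $\tau_X$ on first Chern classes by sending $c_1^{\ch_\fl}(\calL) = \fl(c_1^\ch(\calL))$ in $\ch_{H,\fl}(X)$ to $c_1^A(\calL) \in A(X)$. This extends to classes of geometric origin by multiplicativity and continuity in the filtration. For general classes I use the fact that every class is a proper push-forward of a class of geometric origin. Compatibility with pull-backs is immediate, since both $A$ and $\ch_\fl$ are built so that pull-back commutes with the first Chern class.

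The main obstacle is the compatibility of $\tau$ with proper push-forwards. For a proper equivariant morphism $f : X \to Y$, the required equality $\tau_Y \circ f_*^{\ch_\fl} = f_*^A \circ \tau_X$ is the content of bivariant Riemann--Roch, and the Todd class $\Td(Tf)$ built into the definition of $f_*^{\ch_\fl}$ in the excerpt is precisely the multiplicative correction needed to absorb the discrepancy between the two push-forwards. The non-equivariant version is Panin's theorem in~\cite{PaninRR}. For the equivariant case, I would run Panin's argument through Totaro's finite-dimensional approximations, using that both $A$ and $\ch_\fl$ are Chern-complete and compatible with such approximations. Invertibility of $\tau$ then follows from the symmetric construction using $\fl^{-1}$, giving the desired isomorphism of cohomology theories.
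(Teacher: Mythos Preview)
The paper does not supply a proof of this proposition: it is stated with attribution to Panin--Smirnov~\cite{PaninRR} and used as a black box. Your sketch is a reasonable outline of how that reference proceeds in the non-equivariant case---use the logarithm $\fl$ to twist first Chern classes, and absorb the discrepancy in push-forwards via the Todd class of the relative tangent bundle---together with the observation that under Assumption~\ref{assum:running} both sides arise from $\MGL_H$ via Totaro's approximation, so the argument passes to the equivariant setting. Since the paper offers nothing to compare against beyond the citation, there is no divergence in approach to discuss; your write-up simply fills in what the paper takes for granted.
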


 Note that for any $M\in \Sm^H_k$ and any $\eta\in \orH(M\times M)$ of geometric origin, i.e., \[\eta=f(\widetilde{c_1}(\calL_1),\dots,\widetilde{c_1}(\calL_k))(1_{M\times M}), \]
 we denote  \[\fl(\eta)=f\left(\fl(\widetilde c^{\ch}_1(\calL_1)),\dots,\fl(\widetilde c_1^{\ch}(\calL_k))\right)(1_{M\times M})\in \Ch_{H,\fl}(M\times M).\] We define the bivariant Riemann-Roch map $RR$ as   \[RR:A_H(M\times M)\to \ch_{H,\fl}(M\times M), \quad RR(\eta)=\fl(\eta) \pi_2^*(\Td(TM)),\]
where $\pi_2:M\times M\to M$ is the projection to the second factor. Note that the target of $RR$ is the usual equivariant Chow ring extended by scalars and then completed, to be distinguished from the usual twisted (ordinary) Chow ring $\ch_\fl$. From Proposition \ref{prop:RR}, we have the following fact which can be proved the same way as \cite[Proposition~5.11.11]{CG}.
\begin{corollary}\label{cor:RR}
Let $\eta,\eta'\in \orH(M\times M) $ be of geometric origin, then \[RR(\eta* \eta')=RR(\eta)* RR(\eta'),\]
that is, $RR$ is an isomorphism of convolution algebras.
\end{corollary}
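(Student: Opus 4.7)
The plan is to reduce the identity to a Chern-class computation in ordinary equivariant Chow theory via Proposition~\ref{prop:RR} and the Grothendieck--Riemann--Roch (GRR) formula. By Proposition~\ref{prop:RR}, the assignment $\eta\mapsto\fl(\eta)$ is an isomorphism of equivariant oriented cohomology theories $A_H(-)\cong\ch_{H,\fl}(-)$; in particular it commutes with pull-backs and products, and its compatibility with push-forwards encodes the GRR identity
\[
\fl(f_*^A\eta)=f_*^\ch\bigl(\fl(\eta)\cdot\Td(T_f)\bigr)
\]
for any proper equivariant morphism $f$ of smooth $H$-varieties, with $T_f$ the relative tangent bundle. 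Following the convention of Chriss--Ginzburg, Proposition~5.11.11, the convolution on $\ch_{H,\fl}(M\times M)$ appearing on the right-hand side of the statement is understood as being computed using the ordinary equivariant Chow pull-backs and push-forwards, extended by scalars to $R$.

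Write $p_{ij}\colon M^3\to M^2$, $p_i\colon M^3\to M$, and $\pi_i\colon M\times M\to M$ for the standard projections. The essential geometric observations are that $T_{p_{13}}=p_2^*(TM)$ and that $\pi_2\circ p_{12}=p_2$, $\pi_2\circ p_{23}=\pi_2\circ p_{13}=p_3$. Applying GRR to $p_{13,*}^A$ in the definition $\eta*\eta'=p_{13,*}^A(p_{12}^{A,*}\eta\cdot p_{23}^{A,*}\eta')$, and using that $\fl$ commutes with pull-backs and products, one obtains
\[
\fl(\eta*\eta')=p_{13,*}^\ch\bigl(p_{12}^{\ch,*}\fl(\eta)\cdot p_{23}^{\ch,*}\fl(\eta')\cdot p_2^{\ch,*}\Td(TM)\bigr).
\]
Multiplying by $\pi_2^{\ch,*}\Td(TM)$ and pulling this factor inside the push-forward via the projection formula (using $\pi_2\circ p_{13}=p_3$) then yields
\[
RR(\eta*\eta')=p_{13,*}^\ch\bigl(p_{12}^{\ch,*}\fl(\eta)\cdot p_{23}^{\ch,*}\fl(\eta')\cdot p_2^{\ch,*}\Td(TM)\cdot p_3^{\ch,*}\Td(TM)\bigr).
\]
On the other hand, expanding $RR(\eta)*RR(\eta')$ directly via the convolution formula and using $\pi_2\circ p_{12}=p_2$ and $\pi_2\circ p_{23}=p_3$ to simplify the pull-backs of the two copies of $\pi_2^{\ch,*}\Td(TM)$ produces exactly the same expression, proving the equality.

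The only real subtlety is recognizing that the Todd factor $\pi_2^*\Td(TM)$ built into the definition of $RR$ is precisely what compensates for the extra $\Td(T_{p_{13}})=p_2^{\ch,*}\Td(TM)$ arising when GRR is applied to $p_{13,*}^A$; one Todd copy gets absorbed by the GRR correction, the other lands on the third factor via the projection formula, and together they match the two Todd copies produced by pulling back $RR(\eta)$ and $RR(\eta')$ along $p_{12}$ and $p_{23}$. Beyond this Chern-class bookkeeping and standard naturality, the argument is a mechanical consequence of Proposition~\ref{prop:RR}.
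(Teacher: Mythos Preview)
Your proposal is correct and is precisely the argument the paper has in mind: the paper does not spell out a proof but simply refers to \cite[Proposition~5.11.11]{CG}, and what you have written is exactly that Chriss--Ginzburg computation, transported to the present setting via Proposition~\ref{prop:RR}. The key points---that $T_{p_{13}}\cong p_2^*TM$, that the three projection identities $\pi_2\circ p_{12}=p_2$, $\pi_2\circ p_{23}=p_3$, $\pi_2\circ p_{13}=p_3$ route the Todd factors correctly, and that the $\pi_2^*\Td(TM)$ built into $RR$ absorbs the GRR correction---are all handled accurately.
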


\section{Convolution construction of the formal affine Hecke algebra}\label{sec:conv_Hecke}
In this section, we identify the formal affine Hecke algebra as the convolution algebra of the Steinberg variety. 

\subsection{Construction and the main theorem}From now on,  $G$ will be a semi-simple, simply-connected algebraic group with Lie algebra $\lieg$ over an algebraically closed field $k$. Fix $T\subseteq B\subseteq G$ where $T$ a maximal torus and $B$ is a Borel subgroup containing $T$, and let $\La$ be the group of characters of $T$, then there is an associated simply connected root datum $\Sigma\hookrightarrow \La^\vee$ such that $W=N_G(T)/T$ and $\La=\La_w$ is the weight lattice. We identify $G/B$ with the flag variety $\calB$, parametrizing all Borel subalgebras of $\lieg$, then $\La\cong\hbox{Pic}(\calB)$. Let $\calN$ be the nil-cone of $\lieg$ consisting of nilpotent elements. It admits a resolution of singularities given by $\widetilde{\calN}:=T^*\calB$.
The group $G$ acts on $\calB$ naturally and acts on $\widetilde{\calN}$ via the induced action. There is also an action of $\Gm$ on $\widetilde{\calN}$ via scaling each fiber of the cotangent bundle. 

Let $A$ be an arbitrary equivariant  oriented cohomology theory, whose associated formal group law is denoted by $(R,F)$ where $R:=A(k)$ and $F(u,v)\in R[\![u,v]\!]$. 
Let $k_q$ be the standard 1-dimensional representation of $\Gm$, whose $\Gm$-equivariant first Chern class (or weight for short) is denoted by $x_\gamma\in A_{\Gm}(\pt)$.

\begin{prop}
\label{prop:cohpoint}There is an isomorphism of $R$-algebras
\[A_{G\times\Gm}(\widetilde{\calN})\cong A_{G\times\Gm}(\calB)\cong A_{G\times \Gm}(G/T)\cong  A_{T\times\Gm}(k)\cong R[\![x_\gamma]\!][\![\Lambda]\!]_F.\]
\end{prop}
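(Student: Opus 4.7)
The plan is to establish the chain of isomorphisms one step at a time, in each case appealing to the axiomatic properties of equivariant oriented cohomology theories recalled in Section~\ref{sec:prelim_coh}.

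First I would handle $A_{G\times\Gm}(\widetilde{\calN})\cong A_{G\times\Gm}(\calB)$ by homotopy invariance. Since $\widetilde{\calN}=T^*\calB$ is a $(G\times\Gm)$-equivariant vector bundle over $\calB$, with $G$ acting on the fibers by the natural (linear) action and $\Gm$ acting by fiberwise scaling, the pull-back along the zero section is an isomorphism. Strictly speaking, Axiom~\textbf{A}4 is stated for the trivial bundle $X\times_k\bbA^n$, so one reduces to that situation by a standard Mayer--Vietoris / localization argument over an open cover of $\calB$ that trivializes $T^*\calB$, or alternatively one invokes the form of homotopy invariance for equivariant vector bundles standard for oriented theories.

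Next for $A_{G\times\Gm}(\calB)\cong A_{G\times\Gm}(G/T)$, consider the natural projection $\pi:G/T\to G/B=\calB$, which is a $(G\times\Gm)$-equivariant affine bundle with fiber $B/T\cong U$, the unipotent radical of $B$, which is isomorphic (as a variety with linear $T$-action) to an affine space. Again by homotopy invariance, $\pi^*$ is an isomorphism.

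For $A_{G\times\Gm}(G/T)\cong A_{T\times\Gm}(\pt)$ I would apply the torsor axiom~\textbf{A}6 twice. View $G$ as a $(G\times T\times\Gm)$-variety via left translation by $G$, right translation by $T$, and trivial $\Gm$-action. The quotient map $G\to G/T$ is a $T$-torsor, and $T$ is normal in $G\times T\times\Gm$, so
\[A_{G\times\Gm}(G/T)\;\cong\;A_{G\times T\times\Gm}(G).\]
Now view $G\to\pt$ as a $G$-torsor, with $G$ normal in $G\times T\times\Gm$; the same axiom gives
\[A_{G\times T\times\Gm}(G)\;\cong\;A_{T\times\Gm}(\pt).\]

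Finally, for $A_{T\times\Gm}(\pt)\cong R\lbr x_\gamma\rbr\lbr\La\rbr_F$, this is the standard identification of torus-equivariant oriented cohomology of a point with the formal group algebra of its character lattice. The character lattice of $T\times\Gm$ is $\La\oplus\Ga$ with $\Ga=\ZZ\gamma$, and the recipe sends the weight $\la$ to $c_1$ of the associated equivariant line bundle on $\pt$. By the Quillen formula (Axiom~\textbf{A}9) and $c_1^H(\calO)=0$, these classes satisfy exactly the defining relations $x_\la+_F x_\mu=x_{\la+\mu}$ of the formal group algebra, so there is a well-defined ring homomorphism $R\lbr\Ga\oplus\La\rbr_F\to A_{T\times\Gm}(\pt)$. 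That this is an isomorphism is proved by reducing $T\times\Gm$ to a product of copies of $\Gm$ (using Axiom~\textbf{A}2 and a basis of $\La\oplus\Ga$) and computing $A_{\Gm}(\pt)\cong R\lbr x\rbr_F$ directly from the projective bundle formula applied to $\pt=[\bbA^\infty/\Gm]$, using Chern-completeness at the point to pass from $\bbP^N$ to the limit.

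The main obstacle is step 4: making rigorous the identification $A_{T\times\Gm}(\pt)\cong R\lbr\La\oplus\Ga\rbr_F$. This is the piece that genuinely uses the full strength of the axioms (Chern-completeness at the point, projective bundle formula implicit in the Quillen formula, and compatibility under restriction of groups), whereas the other three isomorphisms are essentially formal consequences of homotopy invariance and the torsor axiom.
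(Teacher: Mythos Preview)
Your proof is correct and follows essentially the same approach as the paper: the first two isomorphisms by (extended) homotopy invariance, the third by the torsor axiom, and the fourth by the identification of torus-equivariant cohomology of a point with the formal group algebra. The paper's proof is in fact much terser than yours---it simply says ``extended homotopy equivalence'' for the first two steps, ``definition of equivariant oriented cohomology'' for the third, and cites \cite[Theorem~3.3]{CZZ3} for the last---so your proposal supplies the details the paper omits.
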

\begin{proof}The first and second isomorphism  follow from extended homotopy equivalence. The third identity follows from the definition of equivariant oriented cohomology, and the last one follows from \cite[Theorem 3.3]{CZZ3}. 
\end{proof}

From now on we denote $S=R[\![x_\gamma]\!][\![\Lambda]\!]_F$. For any element $u\in S$, there is a corresponding element in $\End_R(S)$ acting on $S$ via left multiplication by $u$. 
\begin{lemma}\label{lem:scalar}
Let $\diag:\widetilde{\calN}\to \widetilde{\calN}\times \widetilde{\calN}$ be the diagonal embedding and $\proj_i:\widetilde{\calN}\times \widetilde{\calN}\to \widetilde{\calN}$ be the projections for $i=1,2$. Let $u\in S$. Then the following operator on $S$ \[\proj_{1*}(\diag_*(u)\cdot\proj_2^*(\_))\] is left multiplication by $u$.
\end{lemma}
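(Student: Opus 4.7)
The plan is to derive the statement directly from the projection formula together with the identity $\proj_i \circ \diag = \id_{\widetilde{\calN}}$ for $i = 1,2$. Since this is a formal manipulation valid in any equivariant oriented cohomology theory, no special properties of $\widetilde{\calN}$ beyond smoothness (so that $\diag$ is a regular closed embedding, hence proper, and $\proj_i$ are smooth) will be needed.

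First, I would apply the projection formula to the proper morphism $\diag$: for any $u \in A_{G\times\Gm}(\widetilde{\calN})$ and any $v \in A_{G\times\Gm}(\widetilde{\calN})$,
\[
\diag_*(u)\cdot\proj_2^*(v) \;=\; \diag_*\bigl(u\cdot \diag^*\proj_2^*(v)\bigr).
\]
Using compatibility of pull-backs with composition (axiom \textbf{A}1) together with $\proj_2\circ\diag = \id$, the interior pull-back collapses to $\diag^*\proj_2^*(v) = v$, so the right-hand side becomes $\diag_*(u\cdot v)$.

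Next, I would apply $\proj_{1*}$ and use compatibility of push-forwards with composition (axiom \textbf{A}1), together with $\proj_1\circ\diag = \id$, to obtain
\[
\proj_{1*}\bigl(\diag_*(u)\cdot\proj_2^*(v)\bigr) \;=\; (\proj_1\circ\diag)_*(u\cdot v) \;=\; u\cdot v.
\]
Hence the displayed operator sends $v \in S$ to $u\cdot v$, i.e.\ it is left multiplication by $u$ under the identification of Proposition~\ref{prop:cohpoint}.

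There is essentially no obstacle here: the statement is a direct consequence of the projection formula and the fact that both projections split the diagonal. The only point requiring care is that all maps involved are equivariant for $G\times\Gm$ and that $\diag$ is proper and $\proj_2$ is smooth, so that both the push-forward $\diag_*$ and the pull-back $\proj_2^*$ exist in $A_{G\times\Gm}$; this is automatic from the setup.
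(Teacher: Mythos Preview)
Your proof is correct and follows essentially the same approach as the paper: apply the projection formula for $\diag_*$, then use $\proj_i\circ\diag=\id_{\widetilde{\calN}}$ for $i=1,2$ to collapse both the pull-back and the push-forward. The paper's proof is the same chain of equalities, just written in one line.
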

\begin{proof}For any $v\in S$, we have
\begin{gather*}
\proj_{1*}(\Delta_*(u)\cdot \proj^*_2(v))=\proj_{1*}(\Delta_*(u\cdot \Delta^*(\proj^*_2(v))))=u\cdot \Delta^*(\proj_2^*(v))=uv
\end{gather*}
where the first identity follows from the projection formula, and the other two identities follows from the identities $\proj_i\circ \Delta=\id_{\tilde N}$ for $i=1,2$.
\end{proof}

Recall that in $\calB\times\calB$, the orbits of the diagonal $G$-action are in natural one-to-one correspondence with the Weyl group $W$. Let $\calY_\al$ be the orbit corresponding to a simple root $\al\in \Phi$. Its closure $\overline{\calY_\alpha}$ is the union of $\calY_\alpha$ and $\calB_\diag$, the diagonal. Let $Z_\alpha$ be $T^*_{\overline{\calY_\alpha}}(\calB\times\calB)$, which is the closure of $T^*_{\calY_\al}(\calB\times \calB)$, considered as a closed subvariety of $\widetilde{\calN}\times_{\calN}\widetilde{\calN}$. Note that $Z_\alpha$ is smooth for any simple root $\alpha$. 
Let $\pi:Z_\alpha\to \overline{\calY_\alpha}$ be the bundle projection. Note that $Z_\alpha$ is smooth, and the second projection $\proj_2:\widetilde{\calN}\times\widetilde{\calN}\to \widetilde{\calN}$ is proper when restricted to $Z_\alpha$. Without raising too much confusion, we will denote the restriction of $\proj_i$ to $Z_\alpha$ still by $\proj_i$ for $i=1,2$. 
According to Example~\ref{ex:conv_sm}, any element $\eta\in A_{G\times\Gm}(Z_\alpha)$ defines an operator in $\End_R(A_{G\times\Gm}(\widetilde{\calN}))$ by sending any $\mu\in A_{G\times\Gm}(\widetilde{\calN})$ to $\proj_{1*}(\proj_2^*(\mu)\cdot \eta)\in A_{G\times\Gm}(\widetilde{\calN})$. This operator will be denoted by $\eta*_Z$.

From now on in this section, we denote $c_1=c_1^{G\times \Gm}$ for simplicity. Let $\pi_2:\overline{\calY_\al}\to \calB$ be the second projection $\calB\times\calB\to \calB$ restricted to $\overline{\calY_\al}$, which is a $G\times \Gm$-equivariant fibre bundle with each fiber isomorphic to $\PP^1$. Let $\Omega^1_{\pi_2}$ be the relative cotangent bundle of the projection ${\pi_2}:\calB\times\calB\to \calB$, which  is a $G\times \Gm$-equivariant line bundle. Define $\calJ_\alpha=\pi^*\Omega^1_{\pi_2}$ on $Z_\alpha$, and 
\begin{equation}\label{eqn:Q_alpha}
J^A_\alpha:=\frac{c_1(\calJ_\alpha)-c_1( k_q)}{c_1(\calJ_\alpha\otimes k_q^\vee)},
\end{equation} 
which belongs to  $A_{G\times\Gm}(Z_\alpha)$ by Lemma~\ref{lem:fgl1} .

Let $A_{G\times\Gm}(Z)'$ be the $R$-subalgebra of $\End_R(A_{G\times\Gm}(\widetilde{\calN}))$ generated by $u\in A_{G\times \Gm}(\widetilde \calN)$ and  operators $J^A_\alpha*_Z$ for $\alpha\in\Phi$. 
Note that by Lemma~\ref{lem:scalar}, for $u\in A_{G\times \Gm}(\widetilde \calN)\cong S$, the corresponding operator in $ \End_R(A_{G\times\Gm}(\widetilde{\calN}))$ acts by left multiplication by $u$. This is different from the convention in $K$-theory where $x_\lambda\in S$ acts by multiplication by $x_{-\lambda}$ \cite[Proposition 7.6.38]{CG}. Here $x_\lambda=c_1(\calL_\lambda)$ with $\calL_\lambda$ the line bundle on $\calB$ with character $\la\in \Lambda$. 

Let  $\mu_\al:=\frac{x_{-\al}}{-x_\al}.$ Recall from Remark \ref{rmk:gen_J} that we have $J^F_\al\in \HF$ with 
\begin{equation}\label{eq:JFal}
J_{\al}^F\cdot x_\la=(x_{-\al}-x_\ga)Y_{-\al}\cdot x_\la=(x_{-\al}-x_\ga)(\frac{x_\la}{x_{\al}}+\frac{s_\al(x_\la)}{x_{-\al}}), ~\la\in \La.
\end{equation}

\begin{theorem}\label{thm:main}Under  Assumption \ref{assump:strong},  there is an isomorphism of $R$-algebras \[\Psi_A:\HF\to A_{G\times\Gm}(Z)'\]
 defined by $\Psi_A(x)=x\in S\cong A_{G\times\Gm}(\widetilde{\calN})$ for $x\in S\subseteq \HF$ and  $\Psi_A(J^F_\al)=J^A_{\al}$ such that the following diagram commutes
 \begin{equation}
 \label{eq:maindiag}\xymatrix{
\HF\ar[r]^-{\Psi_A}\ar[d]&A_{G\times\Gm}(Z)'\ar[d] \\
\End_R(S)\ar[r]^-\cong & \End_R(A_{G\times\Gm}(\widetilde{\calN})).
}
\end{equation}
\end{theorem}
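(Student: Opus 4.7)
The plan is to leverage the faithful action of $\HF$ on $S$ from Corollary~\ref{cor:faithact}, together with the fact that $A_{G\times\Gm}(Z)'$ is by construction a subalgebra of $\End_R(A_{G\times\Gm}(\widetilde{\calN}))\cong \End_R(S)$. Let $\rho:\HF\to \End_R(S)$ denote the action constructed in Section~\ref{sec:FAHA}. If one shows that $\rho$ sends each algebra generator of $\HF$ into $A_{G\times\Gm}(Z)'$ via the claimed geometric operator, then $\rho$ factors as $\Psi_A$ followed by the inclusion $A_{G\times\Gm}(Z)'\hookrightarrow \End_R(S)$; the diagram \eqref{eq:maindiag} then commutes by construction, injectivity of $\Psi_A$ follows from faithfulness of $\rho$, and surjectivity is immediate from the definition of $A_{G\times\Gm}(Z)'$. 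For the generators $u\in S$ sitting inside $\HF$, this is exactly Lemma~\ref{lem:scalar}: the diagonal class $\Delta_*(u)$ convolves as left multiplication by $u$, matching the $\HF$-action. Consequently the whole proof reduces to the single geometric identity
\[
J^A_\alpha *_Z\, u \;=\; J^F_\alpha\cdot u\qquad \text{for all }u\in S,
\]
to be compared against the algebraic formula \eqref{eq:JFal}.

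To establish this identity, I would factor the two projections $\proj_i:Z_\alpha\to \widetilde{\calN}$ as the composition of the affine bundle $\pi:Z_\alpha\to \overline{\calY_\alpha}$ with the projections $\pi_i:\overline{\calY_\alpha}\to \calB$, using the homotopy invariance identification $A_{G\times\Gm}(\widetilde{\calN})\cong A_{G\times\Gm}(\calB)$. Because $\calJ_\alpha=\pi^*\Omega^1_{\pi_2}$ is pulled back from $\overline{\calY_\alpha}$, the projection formula along $\pi$ collapses the computation to the $\PP^1$-bundle $\pi_2:\overline{\calY_\alpha}\to \calB$, reducing the problem to evaluating
\[
\pi_{1*}\bigl(\pi_2^*(u)\cdot J^A_\alpha\bigr)\in A_{G\times\Gm}(\calB).
\]
Applying the Quillen--Weyl formula \eqref{Quill-formula} to this $\PP^1$-bundle, the two natural sections each contribute a term: the diagonal section produces a denominator $x_\alpha$ with numerator involving $u$ itself, while the section associated with $s_\alpha$ produces a denominator $x_{-\alpha}$ with numerator involving $s_\alpha(u)$. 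The factor $c_1(\calJ_\alpha)-c_1(k_q)$ appearing in the definition of $J^A_\alpha$ from \eqref{eqn:Q_alpha} then contributes the overall twist $x_{-\alpha}-x_\gamma$, yielding
\[
J^A_\alpha *_Z\, u \;=\; (x_{-\alpha}-x_\gamma)\Bigl(\tfrac{u}{x_\alpha}+\tfrac{s_\alpha(u)}{x_{-\alpha}}\Bigr),
\]
which coincides with $J^F_\alpha\cdot u$ by~\eqref{eq:JFal}.

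The main obstacle is the $G\times\Gm$-equivariant bookkeeping inside this Quillen--Weyl computation: correctly identifying the $\Gm$-weight of $\Omega^1_{\pi_2}$ (which is the source of the shift by $x_\gamma$ in the numerator of $J^A_\alpha$), pinning down the Chern roots of the tautological rank-$2$ bundle of the $\PP^1$-bundle $\pi_2$ in terms of $x_\alpha$ and $x_{-\alpha}$, and tracking sign conventions so that \eqref{Quill-formula} produces the denominators $x_\alpha, x_{-\alpha}$ rather than their negatives or $-_F$-analogs. Once this bookkeeping is settled, the match with \eqref{eq:JFal} is a direct substitution, and the three conclusions of the theorem---well-definedness of $\Psi_A$, commutativity of \eqref{eq:maindiag}, and the fact that $\Psi_A$ is an isomorphism---all follow from the framework set up in the first paragraph.
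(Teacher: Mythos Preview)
Your overall framework in the first paragraph is exactly right and matches the paper: both vertical arrows in \eqref{eq:maindiag} are injective (Corollary~\ref{cor:faithact} on the left, definition on the right), so everything reduces to verifying that the operator $J^A_\alpha*_Z$ on $S$ agrees with the algebraic operator $J^F_\alpha$, and Lemma~\ref{lem:scalar} handles the $S$-generators.

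Where your proposal diverges from the paper, and where there is a genuine gap, is in the step ``the projection formula along $\pi$ collapses the computation to the $\PP^1$-bundle $\pi_2:\overline{\calY_\alpha}\to\calB$.'' The maps $\proj_i:Z_\alpha\to\widetilde{\calN}$ do \emph{not} factor through $\pi:Z_\alpha\to\overline{\calY_\alpha}$ followed by $\pi_i:\overline{\calY_\alpha}\to\calB$ composed with the zero section; the image of $\proj_1$ is not contained in the zero section of $\widetilde{\calN}$. Homotopy invariance identifies the cohomology \emph{groups} $A_{G\times\Gm}(\widetilde{\calN})\cong A_{G\times\Gm}(\calB)$, but it does not let you replace the push-forward $\proj_{1*}$ by $(\pi_1\circ\pi)_*$ --- indeed $\pi$ is not proper, so $\pi_*$ is not even defined. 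The paper handles this transfer via Lemma~\ref{Lem:convol_zerosec}, which relates $*_Z$ to the convolution $*_B$ on $\calB\times\calB$ through the map $i^*\circ p_{2*}\circ j_*$; the self-intersection formula (axiom~\textbf{A}8) then identifies this map as multiplication by $c_1\bigl(\Omega^1_{\pi_2}\otimes k_q^\vee\bigr)$. This extra factor is precisely what cancels the denominator $c_1(\calJ_\alpha\otimes k_q^\vee)$ in the definition \eqref{eqn:Q_alpha} of $J^A_\alpha$, leaving only the numerator $c_1(\calJ_\alpha)-c_1(k_q)=x_{-\alpha}-x_\gamma$ that you correctly identified as the twist. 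Your write-up does not explain what happens to that denominator, and without the self-intersection correction there is no mechanism to remove it.

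A secondary difference: the paper first reduces to the rank-one case $G=\SL_2$ (citing the argument of \cite[\S7.6]{CG}), where $\calB=\PP^1$, $Z_\alpha=\PP^1\times\PP^1$, and the Quillen--Weyl bookkeeping becomes completely explicit (Lemmas~\ref{lem:cohw/ring}--\ref{lem:conv}, Proposition~\ref{prop:Q_alpha}). Your attempt to run the computation directly for general $G$ via the $\PP^1$-bundle $\pi_2:\overline{\calY_\alpha}\to\calB$ is a reasonable alternative and could be made to work, but only after supplying the missing normal-bundle correction above.
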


By definition the right vertical map in the diagram \eqref{eq:maindiag} is injective, and by Corollary \ref{cor:faithact} the left vertical map is also injective. The action of $x\in A_{G\times \Gm}(Z)'$ and $\Psi_A(x)\in \HF$ coincide by definition. To prove the theorem, it suffices to show that the action of $J_\al^A$ on $x_\la=c_1(\calL_\la)$ coincides with \eqref{eq:JFal}. 

Following the same reduction argument as in \cite[\S7.6]{CG}, it suffices to assume that  $G$ is of rank-one. We will calculate in \S\ref{subsec:SL2}  the effect of the operator $J^A_\alpha$  on $c_1(\calL_\lambda)$, which is  given by Proposition~\ref{prop:Q_alpha} below.  The proof of Theorem \ref{thm:main} then follows.

We need the following lemma to simplify our calculation. For each $\eta\in A_{G\times \Gm}(\calB\times \calB)$, we denote the convolution operator $\eta*_{\calB\times\calB}\in \End(A_{G\times \Gm}(\calB))$ by $\eta*_{\calB\times\calB}\mu=\pi_{1*}(\eta\cdot\pi_2^*(\mu))$ where $\pi_i$ are the projections $\calB\times \calB\to \calB$ for $i=1,2.$
\begin{lemma} \cite[Lemma~5.4.27]{CG}\label{Lem:convol_zerosec}
Let $j:Z_\al\to \widetilde{\calN}\times\widetilde{\calN}$ be the natural embedding. Let $p_2:\widetilde{\calN}\times\widetilde{\calN}\to \widetilde{\calN}\times\calB$ be the identity on the first factor and the bundle projection on the second  factor. Let $i:\calB\times\calB\to \widetilde{\calN}\times\calB$ be the zero-section. Then, $p_2\circ j$ is injective. Moreover, the following diagram commutes 
\[\xymatrix{A_{G\times\Gm}(Z_\al)\ar[r]^-{*_Z}\ar[d]^-{i^*\circ p_{2*}\circ j_*}& \End_R(S)\ar[d]^-{\cong}\\
A_{G\times\Gm}(\calB\times\calB)\ar[r]^-{*_B}&\End_R(A_{G\times\Gm}(\calB)).
}\]
\end{lemma}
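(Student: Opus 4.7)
The proof has two parts: injectivity of $p_2\circ j$, and commutativity of the diagram.

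\emph{Injectivity.} By construction, $Z_\alpha$ lies inside the fibre product $\widetilde{\calN}\times_{\calN}\widetilde{\calN}$, so every point of $Z_\alpha$ has the form $((b_1,v),(b_2,v))$ with the two cotangent components forced to agree (they have a common Springer image in $\calN$). Hence $p_2\circ j$ sends such a point to $(b_1,v,b_2)$, from which the original datum is obviously recovered.

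\emph{Commutativity.} Write $\rho_i=\proj_i\circ j:Z_\alpha\to\widetilde{\calN}$ for $i=1,2$, where $\proj_i:\widetilde{\calN}\times\widetilde{\calN}\to\widetilde{\calN}$ are the two projections. Let $q_1,q_2$ denote the projections from $\widetilde{\calN}\times\calB$, and let $\proj_1^{\calB},\proj_2^{\calB}$ denote the projections from $\calB\times\calB$. Unwinding the definitions one finds $q_1\circ(p_2\circ j)=\rho_1$ and $q_2\circ(p_2\circ j)=\pi\circ\rho_2$, where $\pi:\widetilde{\calN}\to\calB$ is the bundle projection. For any $\eta\in A_{G\times\Gm}(Z_\alpha)$ and $\nu\in A_{G\times\Gm}(\calB)$, the projection formula then yields
\[
\eta\ast_Z(\pi^*\nu)\;=\;\rho_{1*}\bigl(\eta\cdot(p_2\circ j)^*q_2^*\nu\bigr)\;=\;q_{1*}\bigl((p_2\circ j)_*\eta\cdot q_2^*\nu\bigr).
\]

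Next, let $s:\calB\to\widetilde{\calN}$ be the zero section, so that $s^*$ is the two-sided inverse of $\pi^*$ under the identification $A_{G\times\Gm}(\calB)\cong A_{G\times\Gm}(\widetilde{\calN})\cong S$. The square
\[
\xymatrix{
\calB\times\calB\ar[d]_-{i=s\times\id}\ar[r]^-{\proj_1^{\calB}}&\calB\ar[d]^-{s}\\
\widetilde{\calN}\times\calB\ar[r]^-{q_1}&\widetilde{\calN}
}
\]
is cartesian, and transverse since $q_1$ is a smooth product projection and $s$ is a regular embedding. Axiom \textbf{A}1 therefore gives $s^*q_{1*}=\proj_{1*}^{\calB}\circ i^*$. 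Combining this with $q_2\circ i=\proj_2^{\calB}$, we obtain
\[
s^*\bigl(\eta\ast_Z(\pi^*\nu)\bigr)\;=\;\proj_{1*}^{\calB}\bigl(i^*p_{2*}j_*\eta\cdot(\proj_2^{\calB})^*\nu\bigr)\;=\;(i^*p_{2*}j_*\eta)\ast_B\nu,
\]
which is precisely the claimed commutativity once the right-hand vertical isomorphism is identified with conjugation by $\pi^*$.

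The only subtle point is the base-change identity $s^*q_{1*}=\proj_{1*}^{\calB}\circ i^*$, which rests on transversality of the cartesian square above; this is automatic because $q_1$ is flat and $s$ is a regular embedding. The remainder is a routine combination of the definition of convolution with the projection formula.
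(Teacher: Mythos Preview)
The paper does not supply its own proof of this lemma; it is stated with a citation to \cite[Lemma~5.4.27]{CG} and no argument follows. So there is nothing in the paper to compare against directly.

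Your argument is correct and is essentially the standard one (and the one in \cite{CG}): the injectivity of $p_2\circ j$ comes from the fact that on each fibre $T^*_{\fb}\calB\cong\fn_{\fb}$ the Springer map is the inclusion $\fn_{\fb}\hookrightarrow\calN$, so the second cotangent component is recovered from the nilpotent element and the second Borel; and the commutativity is obtained by combining the projection formula for $(p_2\circ j)_*$ with base change along the transverse cartesian square you wrote down. One small point worth making explicit: you write $p_{2*}\circ j_*$, but $p_2$ itself is not proper. What you actually use (and what the notation in the lemma should be read as) is $(p_2\circ j)_*$, which is legitimate because $p_2\circ j$ is a closed embedding---this is exactly what the injectivity statement buys you, and in the rank-one case treated immediately afterward it is literally the zero section of a line bundle. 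With that caveat, the proof is complete.
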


\subsection{Rank-one case}\label{subsec:SL2}
In this subsection we  assume $G$ has rank 1. There is only one simple root, denoted by $\alpha$.
In this case, $\calB\cong\PP^1$, and $Z_\alpha=\overline{\calY_\alpha}=\PP^1\times\PP^1$. The line bundle $\calL_\lambda$ is isomorphic to $\calO_{\PP^1}(\angl{\lambda,\alpha^\vee})$ on $\PP^1$, and the line bundle $\Omega^1_{\pi_2}$ is identified with $\Omega^1_{\PP^1}\boxtimes \calO_{\PP^1}:=\pi_1^*(\Omega^1_{\PP^1})\otimes \pi_2^*\calO_{\PP^1}$. The composition of $j$ and $p_2$ is the zero-section of the line bundle $\Omega^1_{\PP^1}\boxtimes \calO_{\PP^1}\cong (\Omega^1_{\PP^1}\otimes k_q^\vee)\boxtimes \calO_{\PP^1}$, and so is $i$. The map 
\begin{equation}
\label{eq:Thom}i^*\circ p_{2*}\circ j_*:A_{G\times\Gm}(Z_\al)=A_{G\times \Gm}(\PP^1\times \PP^1)\to A_{G\times\Gm}(\PP^1\times \PP^1),
\end{equation} according to  \textbf{A} 8, 
 (see also \cite[(7.5.17)]{CG} for the $K$-theory version),  
is multiplication by $c_1((\Omega^1_{\PP^1}\otimes k_q^\vee)\boxtimes \calO_{\PP^1})$. 

We identify $\PP^1$ with $\PP(\Aff^2)$, and $T$ with $\Gm$, then the action of $T$ on $\Aff^2$ has weights $\alpha/2$ and $-\alpha/2$.

\begin{lemma}\label{lem:cohw/ring}
Under the identification $A_{G\times\G_m}(\PP^1)\cong A_{T\times\G_m}(k)\cong R[\![x_\gamma]\!][\![x_{\alpha/2}]\!]$ from Proposition \ref{prop:cohpoint}, the class $c_1(\calO_{\PP^1}(1))$ is identified with $x_{\alpha/2}$.
\end{lemma}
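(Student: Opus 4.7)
The strategy is to reduce the statement directly to the naturality built into Proposition~\ref{prop:cohpoint}. That proposition identifies $A_{G\times\Gm}(\calB)$ with $R[\![x_\gamma]\!][\![\La]\!]_F$ in a way that sends $c_1(\calL_\lambda)$ to $x_\lambda$ for each $\lambda\in\La$. In the rank-one case, $\La$ is freely generated by the fundamental weight $\alpha/2$, which is exactly why the target is written as $R[\![x_\gamma]\!][\![x_{\alpha/2}]\!]$. Hence the whole content of the lemma boils down to recognizing $\calO_{\PP^1}(1)$ as the line bundle $\calL_{\alpha/2}$ in the notation of the paper.

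To carry out that recognition, I would restrict to a $T$-fixed point of $\PP^1=\PP(\Aff^2)$. The two fixed points are precisely the weight lines of $\Aff^2$, of weights $\pm\alpha/2$. At the fixed point $[v_+]$ corresponding to the weight line of weight $\alpha/2$, the fiber of the tautological bundle $\calO_{\PP^1}(-1)$ equals that line itself, so it carries $T$-character $\alpha/2$. Using the standard convention $\calL_\lambda=G\times^B k_{-\lambda}$, under which the fiber of $\calL_\lambda$ at the base point of $G/B$ has character $-\lambda$, the computation at $[v_+]$ forces $\calO_{\PP^1}(-1)\cong\calL_{-\alpha/2}$, and consequently $\calO_{\PP^1}(1)\cong\calL_{\alpha/2}$. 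Feeding this into Proposition~\ref{prop:cohpoint} then yields the claim.

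The main thing to be careful about is sign conventions, both in the identification $\Lambda\cong\operatorname{Pic}^G(\calB)$ and in the isomorphism $A_{G\times\Gm}(\calB)\cong A_{T\times\Gm}(k)$. As a cross-check I would compute the character of the fiber at the other fixed point (which should give the opposite sign, consistent with the fact that the two restriction maps differ by the simple reflection $s_\alpha$), and verify against the Borel--Weil identification $H^0(\PP^1,\calO(1))\cong(\Aff^2)^*$ as an $\SL_2$-representation, whose highest weight is $\alpha/2$, again matching $\calL_{\alpha/2}$.
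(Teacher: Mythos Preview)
Your proposal is correct and follows the same approach as the paper. The paper's proof is the single sentence ``This follows from the proof of \cite[Theorem 3.3]{CZZ3},'' which is precisely the source of the identification $c_1(\calL_\lambda)\leftrightarrow x_\lambda$ that you invoke via Proposition~\ref{prop:cohpoint}; you simply unpack this reference and make explicit the step $\calO_{\PP^1}(1)\cong\calL_{\alpha/2}$, which the paper leaves implicit. Your caveat about sign conventions is well placed, since the paper never spells out its convention for $\calL_\lambda$ beyond declaring $x_\lambda=c_1(\calL_\lambda)$, and your cross-checks (the other fixed point, Borel--Weil) are exactly the right way to pin this down.
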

\begin{proof} This follows from the proof of \cite[Theorem 3.3]{CZZ3}.
\end{proof}


Let $\pi_i:\PP^1\times\PP^1\to \PP^1$ be the $i$-th projection for $i=1,2$.

\begin{lemma} \label{lem:pushpullrank1}
The element $1\in A_{G\times\G_m}(\PP^1\times\PP^1)$ acts on $A_{G\times \Gm}(\PP^1)\cong R\lbr x_\ga, x_{\alpha/2}\rbr$ by   \[1*_{\calB\times\calB} c_1(\calL_\lambda)=\frac{\angl{\lambda,\alpha^\vee}_Fx_{-\alpha/2}}{x_{-\alpha/2}-_Fx_{\alpha/2}}+\frac{\angl{\lambda,\alpha^\vee}_Fx_{\alpha/2}}{x_{\alpha/2}-_Fx_{-\alpha/2}}\]
where $\calL_\la$ is the line bundle on $\PP^1$ with character $\la\in \La$.
\end{lemma}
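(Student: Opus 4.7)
The strategy is to reduce the pushforward along $\pi_1$ to a pushforward to a point via base change, and then apply the Quillen--Weyl character formula from Equation~\eqref{Quill-formula}. First, note that $1*_{\calB\times\calB}c_1(\calL_\lambda)=\pi_{1*}(\pi_2^*(c_1(\calL_\lambda)))$. Consider the Cartesian square
\[
\xymatrix{
\PP^1\times\PP^1 \ar[r]^-{\pi_2}\ar[d]_-{\pi_1} & \PP^1 \ar[d]^-{p}\\
\PP^1 \ar[r]^-{p} & \pt
}
\]
where $p$ is the structure morphism. Since $p$ and $\pi_2$ are smooth and proper, equivariant base change (a consequence of axioms \textbf{A}1--\textbf{A}2) yields the identity of operators $\pi_{1*}\circ\pi_2^*=p^*\circ p_*$. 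Under the identification $A_{G\times\Gm}(\PP^1)\cong R\lbr x_\gamma, x_{\alpha/2}\rbr$ from Proposition~\ref{prop:cohpoint}, the map $p^*$ is the natural inclusion of scalars, so the problem reduces to computing $p_*(c_1(\calL_\lambda))$.

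Next, I would realize $\PP^1=\PP(V)$ where $V$ is the standard representation of $G=\SL_2$ with trivial $\Gm$-action; as a $T$-module $V\cong k_{\alpha/2}\oplus k_{-\alpha/2}$, so the Chern roots of $V$ in $G\times\Gm$-equivariant cohomology are $x_{\alpha/2}$ and $x_{-\alpha/2}$. By Lemma~\ref{lem:cohw/ring}, $c_1(\calL_\lambda)=c_1(\calO_{\PP^1}(\langle\lambda,\alpha^\vee\rangle))$ corresponds to $f(c_1(\calO_{\PP^1}(1)))$ with $f(t)=[\langle\lambda,\alpha^\vee\rangle]_F(t)$, i.e., $f(t)=\langle\lambda,\alpha^\vee\rangle_F\, t$ in the notation of the lemma. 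Applying Equation~\eqref{Quill-formula} then gives
\[
p_*(c_1(\calL_\lambda))=\frac{f(-_Fx_{\alpha/2})}{x_{-\alpha/2}-_Fx_{\alpha/2}}+\frac{f(-_Fx_{-\alpha/2})}{x_{\alpha/2}-_Fx_{-\alpha/2}}.
\]
Using that $-_Fx_{\pm\alpha/2}=x_{\mp\alpha/2}$ by the definition of $R\lbr\La\rbr_F$, and that $[n]_F$ is a formal group endomorphism so $[n]_F(-_Fx)=-_F[n]_F(x)=[n]_F(x_{-\mu})$ when $x=x_\mu$, each numerator $f(-_Fx_{\pm\alpha/2})$ simplifies to $\langle\lambda,\alpha^\vee\rangle_F\, x_{\mp\alpha/2}$, which yields precisely the claimed formula.

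The main technical point is the base change in Step~1; everything else is a direct application of the Quillen--Weyl formula together with elementary formal group manipulations. The base change is standard in equivariant oriented cohomology theories satisfying \textbf{A}1--\textbf{A}2 (both $p$ and $\pi_2$ being smooth and proper, with the square transversal by virtue of the product structure), so no serious obstacle arises. As a sanity check, one may observe that the resulting expression is visibly $W=\{1,s_\alpha\}$-invariant (the two terms swap under $s_\alpha$), confirming that it lies in $A_{G\times\Gm}(\pt)\subset A_{T\times\Gm}(\pt)$ before being included back into $A_{G\times\Gm}(\PP^1)$.
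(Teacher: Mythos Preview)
Your proof is correct and follows essentially the same idea as the paper: both reduce to a direct application of the Quillen--Weyl character formula \eqref{Quill-formula} with $f(t)=[\langle\lambda,\alpha^\vee\rangle]_F(t)$ and Chern roots $x_{\pm\alpha/2}$. The only difference is cosmetic: the paper applies \eqref{Quill-formula} directly to $\pi_1:\PP^1\times\PP^1\to\PP^1$ (viewing it as the trivial projective bundle $\PP(\calO_{\PP^1}\otimes V)$, whose relative $\calO(1)$ is $\pi_2^*\calO_{\PP^1}(1)$), whereas you first invoke base change to rewrite $\pi_{1*}\pi_2^*=p^*p_*$ and then apply \eqref{Quill-formula} to $p:\PP^1\to\pt$; the computation and outcome are identical.
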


\begin{proof}We have
\[
\pi_{1*}\left(1\cdot \pi_2^*(c_1\calL_\lambda)\right))
=\pi_{1*}\left(\pi_2^*(c_1\calO_{\PP^1}(\angl{\lambda,\alpha^\vee}))\right)
\overset{\eqref{Quill-formula}}=\frac{\angl{\lambda,\alpha^\vee}_Fx_{-\alpha/2}}{x_{-\alpha/2}-_Fx_{\alpha/2}}+\frac{\angl{\lambda,\alpha^\vee}_Fx_{\alpha/2}}{x_{\alpha/2}-_Fx_{-\alpha/2}}.
\]
\end{proof}

By the projection formula and Lemma \ref{lem:pushpullrank1}, we have the following more general formula.

\begin{lemma}\label{lem:conv}
Let $f(x_\alpha,x_\gamma)\in R[\![x_{\alpha/2}]\!][\![x_\gamma]\!]\cong A_{G\times\G_m}(\PP^1)$ be an arbitrary class. Then the element $\pi_1^*(f)\in A_{G\times\G_m}(\PP^1\times\PP^1)$ acts on $c_1(\calL_\lambda)\in A_{G\times \Gm}(\PP^1)$ by  \[(\pi_1^*f)*_{\calB\times\calB}c_1(\calL_\lambda)=f(x_\alpha,x_\gamma)\left(\frac{\angl{\lambda,\alpha^\vee}_Fx_{-\alpha/2}}{x_{-\alpha/2}-_Fx_{\alpha/2}}+\frac{\angl{\lambda,\alpha^\vee}_Fx_{\alpha/2}}{x_{\alpha/2}-_Fx_{-\alpha/2}}\right).\]
\end{lemma}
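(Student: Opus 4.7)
The plan is essentially to unfold the definition of $*_{\calB\times\calB}$ and then invoke the projection formula together with Lemma~\ref{lem:pushpullrank1}. By definition of the convolution on $\calB\times\calB$, we have
\[
(\pi_1^* f) *_{\calB\times\calB} c_1(\calL_\lambda) = \pi_{1*}\bigl((\pi_1^* f)\cdot \pi_2^*(c_1(\calL_\lambda))\bigr).
\]
Since the class $f \in A_{G\times\Gm}(\PP^1)$ enters only through its pullback along $\pi_1$, the projection formula (axiom \textbf{A}1 for the proper morphism $\pi_1:\PP^1\times\PP^1 \to \PP^1$) permits us to extract $f$ from the push-forward:
\[
\pi_{1*}\bigl((\pi_1^* f)\cdot \pi_2^*(c_1(\calL_\lambda))\bigr) \;=\; f \cdot \pi_{1*}\bigl(\pi_2^*(c_1(\calL_\lambda))\bigr).
\]

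The residual push-forward is precisely the specialization $\eta = 1$ of the convolution already evaluated in Lemma~\ref{lem:pushpullrank1}, so it equals
\[
\frac{\angl{\lambda,\alpha^\vee}_F x_{-\alpha/2}}{x_{-\alpha/2} -_F x_{\alpha/2}} + \frac{\angl{\lambda,\alpha^\vee}_F x_{\alpha/2}}{x_{\alpha/2} -_F x_{-\alpha/2}}.
\]
Multiplying by $f(x_\alpha, x_\gamma)$ gives the claimed identity. There is no genuine obstacle; the argument is a two-line composition of the projection formula with the previous lemma, and requires no new geometric input.
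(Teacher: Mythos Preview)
Your proof is correct and matches the paper's own argument, which simply says the lemma follows from the projection formula and Lemma~\ref{lem:pushpullrank1}. One minor point: the projection formula is part of item (b) in the data defining an equivariant oriented cohomology theory, not axiom \textbf{A}1.
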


\begin{prop}\label{prop:Q_alpha}
We have $J^A_\alpha\in A_{G\times\G_m}(Z_\al)$ is equal to the class 
\[\frac{c_1(\calO(-2))-c_1( k_q)}{c_1(\calO(-2)\otimes k_q^\vee)}.\] Moreover, the effect of the operator $J^A_\al*_Z\in A_{G\times \Gm}(Z)'$ on $c_1(\calL_\la)\in A_{G\times \Gm}(\PP^1)$ coincides with that of $J^F_{\al}$ on $x_\la\in S$.
\end{prop}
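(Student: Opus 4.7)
The plan is to unwind the geometry in the rank-one case and use the Thom-type identity of Lemma~\ref{Lem:convol_zerosec} to reduce the convolution on $Z_\alpha$ to an elementary push-pull on $\calB\times\calB$ already computed in Lemma~\ref{lem:conv}. First, in rank one $\calB=\PP^1$, and the unique non-diagonal $G$-orbit $\calY_\alpha$ has closure $\overline{\calY_\alpha}=\calB\times\calB$ (since $\calB_\diag$ lies in the closure of $\calY_\alpha$). Hence $Z_\alpha=T^*_{\overline{\calY_\alpha}}(\calB\times\calB)$ is the zero section, i.e.\ $\calB\times\calB$ itself, and the bundle map $\pi:Z_\alpha\to\overline{\calY_\alpha}$ is the identity. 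Therefore $\calJ_\alpha=\Omega^1_{\pi_2}=\pi_1^*\Omega^1_{\PP^1}\cong\pi_1^*\calO_{\PP^1}(-2)$, which is the line bundle abbreviated $\calO(-2)$; substitution into~\eqref{eqn:Q_alpha} gives the first assertion.

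For the action on $c_1(\calL_\lambda)$, Lemma~\ref{Lem:convol_zerosec} translates $J_\alpha^A*_Z$ into convolution on $\calB\times\calB$ by the class $\eta:=(i^*\circ p_{2*}\circ j_*)(J_\alpha^A)$. In rank one the composite $p_2\circ j$ coincides with the zero-section $i$ of the $G\times\Gm$-equivariant line bundle $(\Omega^1_{\PP^1}\otimes k_q^\vee)\boxtimes\calO_{\PP^1}\cong\calJ_\alpha\otimes k_q^\vee$, so by the self-intersection formula (axiom~\textbf{A}8) the composite $i^*\circ p_{2*}\circ j_*$ is multiplication by $c_1(\calJ_\alpha\otimes k_q^\vee)$. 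Since this is precisely the denominator appearing in~\eqref{eqn:Q_alpha} for $J_\alpha^A$, the two factors telescope to give $\eta=c_1(\calO(-2))-c_1(k_q)$. Using Lemma~\ref{lem:cohw/ring} and the Quillen formula one computes $c_1(\calO(-2))=2\cdot_F x_{-\alpha/2}=x_{-\alpha/2}+_F x_{-\alpha/2}=x_{-\alpha}$ (exploiting the relation $x_\mu+_F x_\nu=x_{\mu+\nu}$ in the formal group algebra) and $c_1(k_q)=x_\gamma$, whence $\eta=\pi_1^*(x_{-\alpha}-x_\gamma)$.

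Applying Lemma~\ref{lem:conv} with $f=x_{-\alpha}-x_\gamma$ gives
\[
\eta*_{\calB\times\calB}c_1(\calL_\lambda)=(x_{-\alpha}-x_\gamma)\left(\frac{\angl{\lambda,\alpha^\vee}_F x_{-\alpha/2}}{x_{-\alpha/2}-_F x_{\alpha/2}}+\frac{\angl{\lambda,\alpha^\vee}_F x_{\alpha/2}}{x_{\alpha/2}-_F x_{-\alpha/2}}\right).
\]
The identity $x_{\alpha/2}+_F x_{-\alpha/2}=x_0=0$ yields $x_{\pm\alpha/2}-_F x_{\mp\alpha/2}=x_{\pm\alpha}$. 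In the rank-one simply-connected case $\lambda=\angl{\lambda,\alpha^\vee}\omega$ for the fundamental weight $\omega=\alpha/2$, so $x_\lambda=\angl{\lambda,\alpha^\vee}_F x_{\alpha/2}$ and $s_\alpha(x_\lambda)=x_{-\lambda}=\angl{\lambda,\alpha^\vee}_F x_{-\alpha/2}$. Substituting, the right-hand side simplifies to $(x_{-\alpha}-x_\gamma)\bigl(\tfrac{s_\alpha(x_\lambda)}{x_{-\alpha}}+\tfrac{x_\lambda}{x_\alpha}\bigr)=(x_{-\alpha}-x_\gamma)Y_{-\alpha}\cdot x_\lambda=J_\alpha^F\cdot x_\lambda$ by~\eqref{eq:JFal}. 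The main obstacle is bookkeeping rather than conceptual: one has to keep track of the equivariant line bundles and their $\Gm$-weights carefully so that the denominator of $J_\alpha^A$ truly cancels against the self-intersection factor, producing the prefactor $(x_{-\alpha}-x_\gamma)$ that turns out to be exactly the scalar appearing in the definition of $J_\alpha^F$.
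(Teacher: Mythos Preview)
Your proof is correct and follows essentially the same route as the paper's: identify $\calJ_\alpha$ with $\calO(-2)$ via $\Omega^1_{\PP^1}\cong\calO_{\PP^1}(-2)$, use Lemma~\ref{Lem:convol_zerosec} together with the self-intersection formula to cancel the denominator of $J^A_\alpha$ and reduce to the convolution on $\calB\times\calB$ by $\pi_1^*(x_{-\alpha}-x_\gamma)$, then apply Lemma~\ref{lem:conv} and simplify in the formal group algebra to recover~\eqref{eq:JFal}. Your write-up is slightly more explicit than the paper's about why $Z_\alpha=\calB\times\calB$ in rank one and about the telescoping of the denominator, but the argument is the same.
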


\begin{proof}
The first part follows from the fact that $\Omega^1_{\PP^1}=\calO_{\PP^1}(-2)$. For the second part, we have
\begin{eqnarray*}
J^A_\alpha*_Z c_1(\calL_\lambda)&=&\pi_{1*}\left[\pi_2^*\left(c_1(\calL_\lambda)\right)\cdot c_1\left((\Omega^1_{\PP^1} \otimes k_q^\vee)\boxtimes\calO_{\PP^1}\right)\cdot J^A_\alpha\right]\\
&=&\pi_{1*}\left[\pi_2^*\left(c_1(\calO_{\PP^1}(\angl{\lambda,\alpha^\vee}))\right)\cdot \pi_1^*\left(c_1(\calO_{\PP^1}(-2))-c_1( k_q)\right)\right]\\
&=&(x_{-\alpha}-x_{\gamma})\left(\frac{(-\angl{\lambda,\alpha^\vee})_Fx_{\alpha/2}}{x_{-\alpha/2}-_Fx_{\alpha/2}}+\frac{(-\angl{\lambda,\alpha^\vee})_Fx_{-\alpha/2}}{x_{\alpha/2}-_Fx_{-\alpha/2}}\right)\\
&=&(x_{-\alpha}-x_{\gamma})\cdot\left(\frac{s_\alpha\left(x_\lambda\right)}{x_{-\alpha}}+\frac{x_\lambda}{x_\alpha}\right)
\end{eqnarray*}
where the first identity follows from Lemma~\ref{Lem:convol_zerosec}, \eqref{eq:Thom} and the definition of $*_B$, and the second identity follows from the definition of $J_\alpha^A$. The third identity follows from Lemma \ref{lem:conv} and the projection formula. The fourth identity follows since the rank is assumed to be one. Comparing with \eqref{eq:JFal} we know that the effect of $J^A_\al$ on $c_1(\calL_\la)$ coincides with that of $J^F_{\al}$ on $x_\la$, so the conclusion follows.
\end{proof}

\subsection{Bivariant Riemann-Roch} \label{subsec:riemroch} In this subsection assume that $R$ is a $\QQ$-algebra.
As is proved in \S\ref{subsec:HFrational},  all the formal affine Hecke algebras are isomorphic to each other regardless of the formal group laws. 
In the following Proposition, we prove that the bivariant Riemann-Roch map defined in Corollary~\ref{cor:RR}  provides  isomorphisms between convolution algebras and is compatible with the one in \S\ref{subsec:HFrational}. Recall from \S\ref{subsec:fga} that $\fl(x)$ is the power series such that $\fl(x+y)=\fl(x)+_F\fl(y)$. 

\begin{prop}\label{prop:RR_geom_alg}
Under Assumptions~\ref{assum:running}, \ref{assump:strong}, and $\bbQ\subseteq R$. 
Let $RR:A_{G\times\Gm}(\calB\times\calB)\to \ch_{G\times\Gm}(\calB\times\calB;\QQ)\hat{\otimes}_\QQ R$ be the bivariant Riemann-Roch map as in Corollary~\ref{cor:RR}. Then $RR$ induces an isomorphism $RR:A_{G\times\Gm}(Z)'\to\ch_{G\times\Gm}(Z;\QQ)'\hat{\otimes} R$ making the following diagram commutative, \[\xymatrix{
A_{G\times\Gm}(Z)'\ar[r]^-{RR}\ar[d] &\ch_{G\times\Gm}(Z;\QQ)'\hat{\otimes} R\ar[d]\\
A_{G\times\Gm}(\calB\times\calB)\ar[r]^-{RR}& \ch_{G\times\Gm}(\calB\times\calB;\QQ)\hat{\otimes}_\QQ R
}\] where the vertical maps are given by Lemma~\ref{Lem:convol_zerosec}.

Moreover, the following diagram of isomorphisms commutes:
\[\xymatrix{
\HF\ar[r]^-{\widetilde\phi_{F,F_a}}\ar[d]_\sim^{\Psi_A}&\HF^a\ar[d]_\sim^{\Psi_{\ch}}\\
A_{G\times\Gm}(Z)'\ar[r]^-{RR} &\ch_{G\times\Gm}(Z;\QQ)'\hat{\otimes} R
}\]
where the map $\widetilde \phi_{F,F_a}$ is defined in \S\ref{subsec:HFrational}.
\end{prop}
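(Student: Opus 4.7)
The plan is to derive both parts of the proposition from the single identity
\[
RR\circ \Psi_A \;=\; \Psi_{\ch}\circ \widetilde\phi_{F,F_a}
\]
as $R$-algebra maps out of $\HF$, where on both sides one uses the embeddings of $A_{G\times\Gm}(Z)'$ and $\ch_{G\times\Gm}(Z;\QQ)'\hat\otimes R$ into the ambient convolution algebras supplied by Lemma~\ref{Lem:convol_zerosec}. Once this identity is established, the first square of the proposition commutes by Corollary~\ref{cor:RR}, and the restriction of $RR$ to $A_{G\times\Gm}(Z)' = \Psi_A(\HF)$ lands in $\Psi_{\ch}(\HF^a) = \ch_{G\times\Gm}(Z;\QQ)'\hat\otimes R$; this restriction is automatically bijective because $\Psi_A$, $\Psi_{\ch}$ (Theorem~\ref{thm:main}) and $\widetilde\phi_{F,F_a}$ (Theorem~\ref{prop:FAHAQIso}) are all isomorphisms. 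The second square of the proposition is then exactly the displayed identity.

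Both sides of the displayed identity are $R$-algebra homomorphisms, so by Remark~\ref{rmk:gen_J} it suffices to verify it on the generating set $S\cup\{J_\alpha^F\}_{\alpha\in\Phi}$ of $\HF$. For $u\in S\cong A_{G\times\Gm}(\widetilde\calN)$, $\Psi_A(u)$ is the operator of left multiplication on $S$; via Lemmas~\ref{lem:scalar} and~\ref{Lem:convol_zerosec} it corresponds to the class $\diag_*(u)$. Functoriality of $RR$ along the proper diagonal, together with the defining property $RR\colon c_1(\calL)\mapsto \fl(c_1^{\ch}(\calL))$, yields $RR(\diag_*(u)) = \diag_*(\phi_{F,F_a}(u))$. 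Since $\widetilde\phi_{F,F_a}$ restricts on $S$ to $\phi_{F,F_a}$ and $\Psi_{\ch}$ takes this element to left multiplication by $\phi_{F,F_a}(u)$ in the Chow picture, the two maps agree on $S$.

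The substantive step is the verification on $J_\alpha^F$. Because the $\HF^a$-action on $S_{F_a}$ is faithful by Corollary~\ref{cor:faithact}, it is enough to check equality of the two resulting operators on the elements $\fl(c_1^{\ch}(\calL_\lambda))\in S_{F_a}$ for $\lambda\in\Lambda$. Reducing to rank one as in \S\ref{subsec:SL2}, Proposition~\ref{prop:Q_alpha} gives the explicit action of $J_\alpha^A = \Psi_A(J_\alpha^F)$ on $c_1(\calL_\lambda)$ as a rational expression in $x_{\pm\alpha/2}$ and $x_\gamma$ built from the formal group law $F$; applying $RR$ converts this to the same expression with every $x$ replaced by $\fl(x)$. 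On the other hand, writing $J_\alpha^F = T_{-\alpha}^F-\mu_\alpha-x_\gamma\kappa_\alpha$ and using the explicit formula of Theorem~\ref{prop:FAHAQIso}, $\Psi_{\ch}(\widetilde\phi_{F,F_a}(J_\alpha^F))$ can be computed explicitly on $\fl(c_1^{\ch}(\calL_\lambda))$; the identity of the two then reduces to the defining relation $\fl(x+y) = \fl(x) +_F \fl(y)$, which transforms each $F$-operation on $\fl$-twisted variables into the corresponding $F_a$-operation on untwisted ones.

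The main obstacle is the last computation: one must reconcile three distinct sources of $\fl$-twisting, namely the Todd-class factor in the definition of $RR$, the first Chern class of the relative cotangent bundle $\calJ_\alpha$ entering $J_\alpha^A$, and the $\fl$-twist in the explicit formula for $\widetilde\phi_{F,F_a}(T_\alpha^F)$. In the rank-one reduction these combine into a tractable identity of rational functions in $x_{\alpha/2}$ and $x_\gamma$, whose verification is essentially dictated by the isomorphism of formal group laws $\fl\colon(R,F_a)\to(R,F)$.
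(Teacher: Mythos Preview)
Your outline is correct and follows essentially the same route as the paper: reduce to generators, reduce to rank one, and compare the two sides by an explicit computation balancing the Todd class in $RR$ against the $\fl$-twists in $J_\alpha^A$ and in $\widetilde\phi_{F,F_a}$. The paper carries out precisely this rank-one computation: it first writes the image of $J_\alpha^A$ in $A_{G\times\Gm}(\PP^1\times\PP^1)$ as $\pi_1^*(c_1(\calJ_\alpha)-c_1(k_q))$, applies the definition $RR(\eta)=\fl(\eta)\cdot\pi_2^*\Td(T\PP^1)$ to obtain an explicit class, and then uses Lemma~\ref{lem:conv} to compute its convolution action, arriving at $(\fl_{-\alpha}-\fl_\gamma)\bigl(\tfrac{1}{\fl_{-\alpha}}\de_\alpha+\tfrac{1}{\fl_\alpha}\bigr)$, which matches $\widetilde\phi_{F,F_a}(J_\alpha^F)$ directly.

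The one place where your sketch is imprecise is the sentence ``applying $RR$ converts this to the same expression with every $x$ replaced by $\fl(x)$.'' Taken literally this is false: $RR$ on $\calB\times\calB$ involves the extra Todd factor $\pi_2^*\Td(T\calB)$, so the passage from the $A$-action to the $\ch$-action is not simply $\phi_{F,F_a}$ applied to the answer. What is true (and what your final paragraph correctly anticipates) is a module-level compatibility $\phi_{F,F_a}(\eta*_A\mu)=RR(\eta)*_{\ch}\phi_{F,F_a}(\mu)$, coming from the fact that the relative tangent bundle of $\pi_1$ is $\pi_2^*T\calB$; once this is made explicit, no separate computation on the $\widetilde\phi_{F,F_a}$ side is needed, since $\phi_{F,F_a}$ intertwines the $Q^F_W$- and $Q^{F_a}_W$-actions by construction. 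The paper instead sidesteps this module statement by computing $RR(J_\alpha^A)$ as a class first and only then evaluating its action, which is what you should do if you want to follow the paper verbatim.
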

\begin{proof}It suffices to check the commutativity on $J^A_\al$ for simple root $\alpha$, and  it suffices to prove this in the case when $G=\SL_2$, in which case $Z_\alpha=\PP^1\times\PP^1$ and the embedding $Z_\alpha\inj \widetilde{\calN}\times\widetilde{\calN}\cong T^*(\PP^1\times \PP^1)$ is identified with the zero-section of $T^*(\PP^1\times\PP^1)$. 
By Lemma~\ref{Lem:convol_zerosec}, $i^*p_{2*}j_*(J^A_\alpha)=\pi_1^*(c_1(\calJ_\alpha)-c_1(k_q)) \in A^*_{G\times\Gm}(\PP^1\times\PP^1)$.  Note that in the Chow ring,  $J_\alpha^{\ch}=c_1^{\ch}(\pi_2^*\calJ_\alpha)$. Let $\pi_i:\PP^1\times\PP^1\to \PP^1$ be the $i$-th projection for $i=1,2$.
We have 
\begin{eqnarray*}
RR(J_\alpha^A)&=&\fl\left(\pi_1^*(c_1^{\ch}(\calJ_\alpha)-c_1^{\ch}(k_q))\right)
\cdot \left(\frac{c_1^{\ch}\pi_2^*T\PP^1}{\fl( c_1^{\ch}\pi_2^*T\PP^1)}\right)\\
&=&\left(\frac{\fl( c_1^{\ch}\pi_1^*T^*\PP^1)}{c_1^{\ch}\pi_1^*T^*\PP^1}\right)\cdot\left(\frac{c_1^{\ch}\pi_2^*T\PP^1}{\fl( c_1^{\ch}\pi_2^*T\PP^1)}\right)J_\alpha^{\ch}.
\end{eqnarray*}
Since $\frac{\fl(x)}{x}$ is an invertible power series, so  the map $RR$ is an isomorphism. By Lemma~\ref{lem:conv}, the element \[RR(J_\al^A)=\left(\frac{\fl( c_1^{\ch}\pi_1^*T^*\PP^1)}{c_1^{\ch}\pi_1^*T^*\PP^1}\right)\cdot\left(\frac{c_1^{\ch}\pi_2^*T\PP^1}{\fl( c_1^{\ch}\pi_2^*T\PP^1)}\right)J_\alpha^{\ch}
\]
 is the operator
\[x_\lambda\mapsto \left(\fl(x_{-\alpha})-\fl(x_\gamma)\right)C^{F_a}_{-\alpha}(\frac{x_\alpha}{\fl (x_\alpha)}\cdot x_\lambda).\]
For $F=F_a$ we have $\ka^{F_a}=0$ and $x_{-\al}=-x_\al$, so the right hand side is equal to the action of the operator $(\fl(x_{-\al})-\fl(x_\ga))(-X^{F_a}_{-\al})\frac{x_\al}{\fl(x_\al)}$ on $x_\la$.  Denote $\fl_\al=\fl(x_\al), \fl_{-\al}=\fl(-x_\al)$ and $\fl_\ga=\fl(x_\ga)$, by Theorem \ref{thm:main} we have
\begin{gather*}
\Psi_{\ch}^{-1}\circ RR\circ \Psi_A(J^F_{\al})=RR(J^A_\al)=(\fl_{-\al}-\fl_\ga)(-X^{F_a}_{-\al})\frac{x_\al}{\fl_\al}=(\fl_{-\al}-\fl_\ga)[\frac{1}{x_{-\al}}\de_\al-\frac{1}{x_{-\al}}]\frac{x_\al}{\fl_\al}\\=(\fl_{-\al}-\fl_\ga)[\frac{1}{-x_{\al}}\frac{x_{-\al}}{\fl_{-\al}}\de_\al+\frac{1}{\fl_\al}]=(\fl_{-\al}-\fl_\ga)(\frac{1}{\fl_{-\al}}\de_\al+\frac{1}{\fl_{\al}}).
\end{gather*}
From \S\ref{subsec:HFrational} we have
\begin{gather*}\widetilde \phi_{F,F_a}(J^F_{\al})=\widetilde \phi_{F,F_a}((x_{-\al}-x_\ga)Y_{-\al})=\widetilde\phi_{F,F_a}\left((x_{-\al}-x_\ga)(\frac{1}{x_{-\al}}\de_\al+\frac{1}{x_{\al}})\right)
=(\fl_{-\al}-\fl_\ga)(\frac{1}{\fl_{-\al}}\de_\al+\frac{1}{\fl_{\al}}),
\end{gather*}
so the diagram commutes.
\end{proof}

\subsection{Equivariant oriented cohomology of the Steinberg variety}

In this subsection we assume the  oriented cohomology theory $A_H$ satisfies  Assumption~\ref{assum:running}, and the formal group law $(R,F)$ satisfies Assumption~\ref{assump:strong}. We prove that the algebra $A_{G\times\Gm}(Z)'$ is actually isomorphic to $A_{G\times\Gm}(Z)$. Note that this statement has no meaning without Assumption~\ref{assum:running} and the interpretation in \S\ref{subsec:coh_cob}. Without loss of generality, we assume $A_H=\Omega_H$ and  so $F=F_{\Laz}$. 

 Following the same argument as in \cite[Claim~7.6.7]{CG} (which only relies on formal properties of equivariant oriented cohomology theories), the representation $\Omega_{G\times\Gm}(Z)\to \End_R(\Omega_{G\times\Gm}(\tilde{\calN}))$ is faithful.
 
\begin{prop}
The assignment $J^{\Laz}_{\al}\mapsto J^\Omega_{\al}\in \Omega_{G\times \Gm}(Z_\al)\subset \Omega_{G\times \Gm}(Z)$ for any simple root $\alpha$ extends to a unique injective ring homomorphism \[
\Theta:\bfH_{\Laz}\to \Omega_{G\times \Gm}(Z).\]
\end{prop}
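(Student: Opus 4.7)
The plan is to construct $\Theta$ by factoring through the isomorphism $\Psi_\Omega:\bfH_{\Laz}\isom\Omega_{G\times\Gm}(Z)'$ provided by Theorem~\ref{thm:main} (applied to $A=\Omega$, whose associated formal group law is the universal one $F_{\Laz}$), and then exploiting the faithfulness of the convolution representation stated just before the proposition.

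First I would note, using Lemma~\ref{lem:scalar}, that for any $u\in S\cong\Omega_{G\times\Gm}(\widetilde{\calN})$ the endomorphism of $\Omega_{G\times\Gm}(\widetilde{\calN})$ given by left multiplication by $u$ coincides with the convolution operator $\Delta_*(u)*_Z$. Consequently, each of the generating operators of $\Omega_{G\times\Gm}(Z)'\subseteq\End_R(\Omega_{G\times\Gm}(\widetilde{\calN}))$ is the image, under the convolution representation $\rho:\Omega_{G\times\Gm}(Z)\to\End_R(\Omega_{G\times\Gm}(\widetilde{\calN}))$, of an honest class in $\Omega_{G\times\Gm}(Z)$, namely $\Delta_*(u)$ and $J^\Omega_\alpha$ respectively. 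Since $\rho$ is faithful, it restricts to an isomorphism between $\Omega_{G\times\Gm}(Z)'$ and the $R$-subalgebra $\widetilde{\Omega}\subseteq\Omega_{G\times\Gm}(Z)$ generated by $\Delta_*(S)$ and $\{J^\Omega_\alpha\}_{\alpha\in\Phi}$.

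I then define $\Theta$ to be the composition
\[
\bfH_{\Laz}\xrightarrow{\;\Psi_\Omega\;}\Omega_{G\times\Gm}(Z)'\xrightarrow{\;\rho^{-1}\;}\widetilde{\Omega}\inj\Omega_{G\times\Gm}(Z).
\]
By construction $\Theta(J^{\Laz}_\alpha)=J^\Omega_\alpha$ and the implicit restriction $\Theta|_S$ equals $\Delta_*$. Injectivity is immediate: $\Psi_\Omega$ is an isomorphism by Theorem~\ref{thm:main}, and both $\rho^{-1}$ and the inclusion $\widetilde{\Omega}\inj\Omega_{G\times\Gm}(Z)$ are injective. For uniqueness, Corollary~\ref{cor:struct2}(3) together with Remark~\ref{rmk:gen_J} shows that $\bfH_{\Laz}$ is generated as an $R$-algebra by $S$ and the elements $J^{\Laz}_\alpha$, so a ring homomorphism $\bfH_{\Laz}\to\Omega_{G\times\Gm}(Z)$ is determined by its values on these generators, together with the natural compatibility $\rho\circ\Theta=\Psi_\Omega$ which forces $\Theta|_S=\Delta_*$.

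No substantively new obstacle arises beyond what is already in hand: Theorem~\ref{thm:main} supplies the isomorphism $\Psi_\Omega$, and the faithfulness of $\rho$ (asserted via the formal argument of \cite[Claim~7.6.7]{CG} adapted to $\Omega$ under Assumption~\ref{assum:running}) is precisely what enables us to lift the subalgebra $\Omega_{G\times\Gm}(Z)'$ back into $\Omega_{G\times\Gm}(Z)$ itself. The only verification that needs any thought is that both $\Delta_*(u)$ (by Lemma~\ref{lem:scalar}) and $J^\Omega_\alpha$ (by definition) lie in $\rho^{-1}(\Omega_{G\times\Gm}(Z)')$, which is automatic from the two preceding observations.
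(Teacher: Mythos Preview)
Your proposal is correct and follows essentially the same approach as the paper: both arguments use Theorem~\ref{thm:main} to identify $\bfH_{\Laz}$ with $\Omega_{G\times\Gm}(Z)'$, observe that the generators of the latter lift to genuine classes in $\Omega_{G\times\Gm}(Z)$ via $\Delta_*$ and $J^\Omega_\alpha$, and then invoke the faithfulness of the convolution representation $\rho$ to conclude. Your write-up is more explicit about naming the intermediate subalgebra $\widetilde{\Omega}$ and about the uniqueness clause, but the underlying logic is identical to the paper's.
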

\begin{proof}
The class $J_\alpha^\Omega\in \Omega_{G\times \Gm}(Z)$ acts on $S$ by the operator $J_\alpha^{\Laz}$. Hence, the image of $\Omega_{G\times \Gm}(Z)$ in $\End_R(S)$ contains the operators $J_\alpha^{\Laz}$. The ring homomorphisms $\bfH_{\Laz}\cong\Omega_{G\times \Gm}(Z)' \to \End_R(\Omega_{G\times\Gm}(\tilde{\calN}))$ and $\Omega_{G\times \Gm}(Z)\to \End_R(\Omega_{G\times\Gm}(\tilde{\calN}))$ are injective. Therefore, the statement follows from the fact that $\bfH_{\Laz}$ is generated by $J_\alpha^{\Laz}$ and $S$.
\end{proof}

When restricting to special equivariant cohomology theories, Theorem~\ref{thm:main} has the following strengthening.
\begin{theorem}\label{thm:isom_HF_cob} Under Assumption \ref{assump:strong} and \ref{assum:running}, the morphism $\Theta$ is an isomorphism of algebras.
\end{theorem}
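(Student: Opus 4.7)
Since $\Theta$ is already injective, the task is surjectivity. The plan is to exhibit $\Omega_{G\times\Gm}(Z)$ as a free $S$-module of rank $|W|$ equipped with a filtration indexed by the Bruhat order, and then show that the $S$-basis $\{J^{\Laz}_{I_w}\}_{w\in W}$ of $\bfH_{\Laz}$ furnished by Corollary~\ref{cor:struct2} and Remark~\ref{rem:JFproperty} is sent by $\Theta$ to an $S$-basis of $\Omega_{G\times\Gm}(Z)$ via an upper-triangular, invertible-diagonal correspondence.

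Enumerate $W=\{w_1,\dots,w_N\}$ so that $w_i<w_j$ in Bruhat order implies $i<j$, and set $Z_{\le i}=\bigcup_{j\le i}Z_{w_j}$. The open stratum $Z_{\le i}\setminus Z_{\le i-1}$ equals the smooth conormal bundle $T^*_{\calY_{w_i}}(\calB\times\calB)$, which is a vector bundle over the orbit $\calY_{w_i}$; since $\calY_{w_i}\to\calB$ is a locally trivial affine bundle via the first projection, homotopy invariance gives $\Omega_{G\times\Gm}(T^*_{\calY_{w_i}}(\calB\times\calB))\cong\Omega_{G\times\Gm}(\calB)\cong S$. Under Assumption~\ref{assum:running}, $\Omega_{G\times\Gm}$ comes from a Borel--Moore oriented homology theory, so the localization sequence applies to arbitrary closed inclusions and yields right-exact sequences
\[
\Omega_{G\times\Gm}(Z_{\le i-1})\to\Omega_{G\times\Gm}(Z_{\le i})\to S\to 0.
\]
A cellular-fibration argument adapted from \cite[Lemma~5.5.1]{CG} to equivariant algebraic cobordism---using the Bott--Samelson-type fibre products $\widetilde Z_{I_{w_i}}:=Z_{s_{i_1}}\times_{\widetilde\calN}\cdots\times_{\widetilde\calN}Z_{s_{i_{\ell(w_i)}}}\to Z_{w_i}$ as a source of classes splitting each localization step---upgrades these to split short exact sequences. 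Consequently $\Omega_{G\times\Gm}(Z)$ is a free $S$-module of rank $|W|$, with increasing filtration $F_i:=\mathrm{im}\bigl(\Omega_{G\times\Gm}(Z_{\le i})\to\Omega_{G\times\Gm}(Z)\bigr)$ whose graded pieces are each free of rank one.

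It remains to show $\Theta(J^{\Laz}_{I_{w_i}})\in F_i$ with invertible image in $F_i/F_{i-1}\cong S$. The containment follows from the support-propagation rule for convolution: $J_\alpha^\Omega\in \Omega_{G\times\Gm}(Z_{s_\alpha})$, and $Z_v\circ Z_{v'}\subseteq Z_{\le vv'}$ whenever $\ell(vv')=\ell(v)+\ell(v')$, so the iterated convolution along a reduced word $I_{w_i}$ is supported on $Z_{\le w_i}$. To compute the leading coefficient in $F_i/F_{i-1}$, I restrict everything to the top open stratum $T^*_{\calY_{w_i}}(\calB\times\calB)$, over which the Bott--Samelson product $\widetilde Z_{I_{w_i}}$ is birational; the calculation then reduces to an iterated rank-one computation in the spirit of Proposition~\ref{prop:Q_alpha} and Lemma~\ref{lem:conv}, and outputs a product of shifted first Chern classes of the form $c_1(\calJ_{\alpha_{i_k}}\otimes k_q^\vee)^{-1}$ times units, which together constitute a unit of $S$. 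Upper-triangularity of $\Theta$ with respect to the Bruhat filtration, with invertible diagonal entries, guarantees that $\{\Theta(J^{\Laz}_{I_w})\}_{w\in W}$ is an $S$-basis of $\Omega_{G\times\Gm}(Z)$, hence $\Theta$ is surjective.

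\emph{Main obstacle.} The critical step is promoting the localization sequences from right-exact to split short-exact, i.e.\ establishing a cellular-fibration theorem for equivariant algebraic cobordism compatible with the Bruhat stratification. The $K$-theoretic version in \cite[Lemma~5.5.1]{CG} does not transfer directly, because the Steinberg variety is singular and the strata $Z_{w_i}$ are not generally smooth; the natural remedy is to pass through the Bott--Samelson resolutions, but verifying that the resulting splitting classes respect the $F_\bullet$-filtration requires a transversality analysis along the Schubert stratification of $\calB\times\calB$, together with compatibility with Totaro's finite-dimensional approximation used to define $\Omega_{G\times\Gm}$ on singular varieties under Assumption~\ref{assum:running}.
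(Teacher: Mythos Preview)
Your overall strategy---Bruhat filtrations on both sides, upper-triangularity of $\Theta$, invertibility of the diagonal entries---is exactly the paper's approach, which in turn follows \cite[\S7.6]{CG}. The paper's computation of the leading term $\Theta_w(J^{\Laz}_{I_w})$ is the explicit product
\[
\prod_{k=1}^{r}\frac{c_1(\calJ_{i_k})-c_1(k_q)}{c_1(\calJ_{i_k}\otimes k_q^\vee)}\Big|_{T^*_{\calY_w}(\calB\times\calB)},
\]
obtained via the Bott--Samelson-type fiber product $\calZ_{i_1}\cap\cdots\cap\calZ_{i_r}\cong T^*_{\calY_w}(\calB\times\calB)$; this is a unit in $S$ by Lemma~\ref{lem:fgl1}, matching what you describe.

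The one substantive comment concerns your ``main obstacle'': you do \emph{not} need to upgrade the localization sequences to split short exact sequences, nor even to prove that the push-forwards $\Omega_{G\times\Gm}(Z_{\le i-1})\to\Omega_{G\times\Gm}(Z_{\le i})$ are injective. Right-exactness alone tells you each $F_i/F_{i-1}$ is a \emph{cyclic} $S$-module (a quotient of $S$). You already know $\Theta$ is injective, and you show that the free rank-one generator $J^{\Laz}_{I_{w_i}}$ of $\bfH_\Laz^{\le w_i}/\bfH_\Laz^{<w_i}$ maps to a unit in that cyclic module; hence each graded map $\Theta_i$ is surjective. A finite induction (or repeated five-lemma) then gives surjectivity of $\Theta$, and together with the a priori injectivity you are done. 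The injectivity of the push-forwards (and hence freeness of $\Omega_{G\times\Gm}(Z)$) then follows \emph{a posteriori}. So the cellular-fibration splitting you flag as problematic is simply unnecessary; the paper, although it asserts the injectivity of the push-forwards in passing, does not actually use anything beyond what the argument above requires.
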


This theorem is proved in the same way as \cite[Theorem~7.6.10]{CG}. For completeness, we include the sketch of the proof.


On $\bfH_{\Laz}$, there is a well-defined filtration by Bruhat order. Explicitly, for any set of reduced sequences $\{I_w\}_{w\in W}$, let $\bfH_{\Laz}^{\leq w}\subseteq\bfH_{\Laz}$ be the $S$-submodule spanned by $J^{\Laz}_{I_v}$ with $v\le w$. This filtration is well-defined since $\{J^{\Laz}_{I_w}\}_{w\in W}$ is a $S$-basis of $\bfH^{\Laz}$, according to Remark \ref{rem:JFproperty}. Indeed, this filtration does not depend on the choice of $\{I_w\}_{w\in W}$, and replacing $J^{\Laz}$ by $T^{\Laz}$ will give the same filtration.  The subquotient $\bfH_{\Laz}^{\leq w}/\bfH_{\Laz}^{<w}$ is free of rank one, spanned by $J_{I_w}^{\Laz}$.

On the other hand, we define a filtration on $\Omega_{G\times\Gm}(Z)$. Let $\calY_w$ be the $G$-orbit of $\calB\times \calB$ corresponding to $w\in W$. By \cite[Corollary 3.3.5]{CG}, we have $Z=\coprod_{w\in W}T^*_{\calY_w}(\calB\times \calB)$. The irreducible components of $Z$ are in one-to-one correspondence with elements in the Weyl group $W$. For example the closure $\overline{T^*_{\calY_{s_i}}(\calB\times \calB)}=T^*_{\overline{\calY_{s_i}}}(\calB\times \calB)$ is the irreducible component corresponding to the simple reflection $s_i$. For any $w\in W$, let 
\[Z_{\leq w}=\coprod_{v\leq w}T^*_{\calY_v}(\calB\times \calB).
\]
 This is a closed subvariety of $Z$, stable under the action of  $G\times\Gm$. The push-forward morphisms $\Omega_{G\times\Gm}(Z_{\leq w})\to \Omega_{G\times\Gm}(Z)$ are injective whose images define a filtration on $\Omega_{G\times\Gm}(Z)$ by Bruhat order. As $Z_{\leq w}=Z_{<w}\coprod T^*_{\calY_w}(\calB\times \calB)$, and $T^*_{\calY_w}(\calB\times \calB)$ is an affine bundle over $\calY_w$, which in turn is an affine bundle over $\calB$, we have 
\begin{equation}\label{eq:Zfil}\Omega_{G\times\Gm}(Z_{\leq w})/\Omega_{G\times\Gm}(Z_{< w})\cong \Omega_{G\times\Gm} (T^*_{\calY_w}(\calB\times \calB))\cong \Omega_{G\times \Gm}(\calB)\cong  S.
\end{equation}

\begin{proof}[Proof of Theorem \ref{thm:isom_HF_cob}]
By definition of $\Theta$ and the proof of  \cite[Proposition 7.6.12.(1)]{CG}, the map $\Theta$ is filtration preserving, so by \cite[Proposition 2.3.20.(ii)]{CG} we only need to show that $\Theta$ induces an isomorphism  of $S$-modules
\[\Theta_w:\bfH_{\Laz}^{\leq w}/\bfH_{\Laz}^{<w}\to  \Omega_{G\times\Gm}(Z_{\leq w})/\Omega_{G\times\Gm}(Z_{< w})\cong S.
\]
 Since the left hand side is a free $S$-module with basis $J^{\Laz}_{I_w}$, it suffices to show that  $\Theta_w(J^{\Laz}_{I_w})$ is invertible in  $S$.

Let $I_w=(i_1,...,i_r)$. We need to calculate $\Theta_w(J^{\Laz}_{I_w})$ which is equal to   the convolution by $J^\Omega_{s_{i_1}}*\cdots *J^\Omega_{s_{i_r}}$.  Following the proof of \cite[Proposition 7.6.12.(2)]{CG}, we define $\proj_{j,j+1}:(T^*\calB)^{r+1}\to T^*(\calB\times\calB)\cong T^*(\calB)\times T^*(\calB)$ to be the projection to the $(j,j+1)$ factor, and define $\calZ_{i_j}=\proj_{j,j+1}^{-1}(T^*_{Y_{s_{i_j}}}(\calB\times\calB))$, for $j=1,\dots,r-1$.  The projection
\[\calZ_{{i_1}}\times_{\tilde{N}} \calZ_{{i_2}}\times_{\tilde{N}}\cdots\times_{\tilde{N}}\calZ_{{i_r}}\to Z_w\] restricts to an isomorphism \[\calZ_{i_1}\cap\calZ_{i_2}\cap\cdots\cap\calZ_{i_r}\cong T^*_{\calY_w}(\calB\times\calB).\] Therefore, we have
 \begin{eqnarray*}
\Theta_w(J^{\Laz}_{I_w})&=&\Theta(J^{\Laz}_{s_1}\cdots J^{\Laz}_{s_r})|_{T^*_{\calY_w}(\calB\times \calB)}\\
&=&(J^\Omega_{s_{i_1}}*\cdots *J^{\Omega}_{s_{i_r}})|_{T^*_{\calY_w}(\calB\times \calB)}\\
&=&(\frac{c_1(\calJ_{i_1})-c_1( k_q)}{c_1(\calJ_{i_1}\otimes k_q^\vee)}|_{\calZ_1})\cap\cdots\cap(\frac{c_1(\calJ_{i_r})-c_1( k_q)}{c_1(\calJ_{i_r}\otimes k_q^\vee)}|_{\calZ_r})\\
&=& \frac{c_1(\calJ_{i_1})-c_1( k_q)}{c_1(\calJ_{i_1}\otimes k_q^\vee)}\cdots\frac{c_1(\calJ_{i_r})-c_1( k_q)}{c_1(\calJ_{i_r}\otimes k_q^\vee)}|_{T^*_{\calY_w}(\calB\times \calB)},
\end{eqnarray*}
which is invertible in $S$ by Lemma~\ref{lem:fgl1}.
\end{proof}

\section{Geometric construction of the standard modules}
In this section, we study representations of the algebra $\HF$. In particular, we identify the set of all irreducible representations, and use Theorem~\ref{thm:main} to show that for any $x\in \calN$, we have an action of $\HF$ on $A(\calB_x)$.

Let $k_\chi$ be a field, and let $R\to k_\chi$ be any point  of $\Spec R$. We consider modules  $M$ over $\HF$, such that the action of $R\subseteq \HF$ factors through $R\to k_\chi$. We assume further that $M$ is finite dimensional as a vector space over $k_\chi$. The abelian category of $\HF$-modules with this property will be denoted by $\HF\hbox{-mod}_\chi$. We will study irreducible objects in this category, and give geometric construction of the standard objects. We say $M\in \HF\hbox{-mod}_\chi$ is irreducible if it has no sub-objects. Recall that we defined a filtration on $\HF$ in \S\ref{subsec:PBW}.

\begin{lemma}\label{lem:repn_factor}
For every irreducible representation $M$ in $\HF\hbox{-mod}_\chi$, the action of $\HF$ factors through the quotient algebra \[\HF^{(0)}\cong \HF^0/\HF^1.\]
\end{lemma}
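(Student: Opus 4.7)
The plan is to show that $\HF^1 \cdot M = 0$, from which it follows that the $\HF$-action on $M$ factors through the quotient algebra $\HF/\HF^1 = \HF^{(0)}$.

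By Lemma~\ref{lem:filprod}, $\HF^1$ is a two-sided ideal of $\HF$, so $\HF^1 \cdot M$ is an $\HF$-submodule of $M$; by irreducibility of $M$ it is either $0$ or all of $M$. I will rule out the latter possibility by producing an integer $N$ with $(\HF^1)^N \cdot M = 0$: then $\HF^1 \cdot M = M$ would give $M = (\HF^1)^N \cdot M = 0$, contradicting $M\neq 0$.

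To produce such an $N$, I combine the left $S$-module decomposition $\HF = \bigoplus_{w \in W} S \cdot T_{I_w}$ from Corollary~\ref{cor:struct2} with the definition of the filtration to rewrite $\HF^k = \bigoplus_{w \in W} \IF^k \cdot T_{I_w}$. Since each $T_{I_w}$ preserves $M$, it suffices to find $N$ with $\IF^N \cdot M = 0$: then $\HF^N \cdot M = 0$, and by Lemma~\ref{lem:filprod} one has $(\HF^1)^N \subseteq \HF^N$, closing the reduction. The ideal $\IF \subset S$ is generated, as an ideal, by the finitely many elements $x_\gamma$ and $x_{\lambda_1}, \ldots, x_{\lambda_n}$ for a basis $\{\lambda_i\}$ of $\Lambda$, so a pigeonhole argument further reduces the problem to showing that each of these generators acts nilpotently on $M$.

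Establishing this nilpotency is the step I expect to be the main obstacle, and it uses the completeness of $S$ in an essential way. Given a generator $x$ as above, let $y \in \End_{k_\chi}(M)$ denote its action on $M$ and write its minimal polynomial as $p(t) = t^a q(t) \in k_\chi[t]$ with $q(0) \neq 0$. Factoring $q(t) = q(0)\bigl(1 - t h(t)\bigr)$ for a suitable polynomial $h$, the element $1 - x\, h(x)$ lies in $1 + \IF \subset S$ and is therefore a unit in the formal power series ring $S$. Transporting this through the action map shows that $q(y)$ is invertible in $\End_{k_\chi}(M)$; combined with $y^a q(y) = p(y) = 0$ this forces $y^a = 0$, and by minimality of $p$ this forces $\deg q = 0$, so $y$ is nilpotent, as required.
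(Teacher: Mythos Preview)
Your overall strategy is sound and the reduction steps are correct: $\HF^1$ is a two-sided ideal by Lemma~\ref{lem:filprod}, so $\HF^1 M$ is a submodule; the identification $\HF^k=\bigoplus_w\IF^k T_{I_w}$ follows from Corollary~\ref{cor:struct2} since rewriting any $T_I$ in the basis $\{T_{I_w}\}$ has coefficients in $S$ and $\IF^k$ is an ideal; and the pigeonhole reduction to nilpotency of the generators of $\IF$ is valid because these generators commute.

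There is, however, a genuine gap in the final nilpotency step. The minimal polynomial $p(t)$ lives in $k_\chi[t]$, so your $h(t)$ has coefficients in $k_\chi$, not in $R$. Consequently the expression ``$1-x\,h(x)$'' is not an element of $S=R\lbr\Gamma\oplus\Lambda\rbr_F$, and you cannot invoke $1+\IF\subset S^\times$ to invert it. This matters: $R\to k_\chi$ need not be surjective (e.g.\ $R=\bbZ$, $k_\chi=\bbQ$), so you cannot in general lift $h$ to $R[t]$; and in $S\otimes_R k_\chi$ the set $1+\IF\otimes k_\chi$ can contain non-units (try $1+\tfrac{1}{2}x_\gamma$ with $R=\bbZ$, $k_\chi=\bbQ$). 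The fix keeps your key idea but avoids the polynomial $h$: let $\bar S$ be the image of $S$ in $\End_{k_\chi}(M)$ and $\bar\IF$ the image of $\IF$. For every $f\in\IF$ and $s\in S$ one has $fs\in\IF$, hence $1-fs\in S^\times$, hence $1-\bar f\,\bar s\in\bar S^\times$; this is exactly the characterization of the Jacobson radical, so $\bar\IF\subseteq J(\bar S)$. Since $\bar S$ is finite-dimensional over $k_\chi$, $J(\bar S)$ is nilpotent, giving $\IF^N M=0$.

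For comparison, the paper takes a different route: it first argues that the central ideal $S^W_+=\IF\cap S^W$ acts by zero on $M$, passes to the finite-rank quotient $\HF/(S^W_+)\cong (S/S^W_+)\rtimes R[W]$, and observes that the induced filtration there is finite so that $\HF^1/(S^W_+)$ is a nilpotent ideal, hence contained in the Jacobson radical. Your argument is more direct---it bypasses $S^W$ and the coinvariant-algebra structure entirely---and once patched as above it is a clean alternative.
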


We introduce some notations. The subring $S^W\subseteq \HF$ is a central subalgebra. The action of $\HF$, when restricted to $S^W$, factors through $S^W\to k_\chi$. Let $\IF$ be the augmentation ideal of $S$ as in \S\ref{subsec:PBW}, and denote the intersection $S^W_+=\IF\cap S^W$, then it is contained in the kernel of  $S^W\to k_\chi$. So $S^W_+$ acts trivially on $M$. 
Let $(S^W_+)$ be the ideal in $\HF$ generated by $S^W_+$. We know that $x_\ga\in S^W_+$, so by Example~\ref{ex:HFdeg0}, we have \[\HF/(S^W_+)\cong (S/S^W_+ )\rtimes R[W].\]
Recall also that $\HF$ is of finite rank as a module over $S^W$. Consequently $\HF/(S^W_+)$ is an $R$-module of finite rank.

\begin{proof}[Proof of Lemma~\ref{lem:repn_factor}]
The filtration on $\HF$ in \S\ref{subsec:PBW} induces a filtration on $\HF/(S^W_+)$. Since $\HF/(S^W_+)$ has finite rank, so $\HF^i/(S^W_+)=0$ for some $i$, and therefore $\HF^1/(S^W_+)$ is a nilpotent subset in $\HF/(S^W_+)$. So $\HF^1/(S^W_+)$ is contained in the  Jacobson radical of $\HF/(S^W_+)$, which acts trivially on $M$.
\end{proof}

\begin{remark}
Assuming that the torsion index $\ttt$ is invertible in $R$ (see \cite[\S5.1]{CPZ}), by \cite[Theorem 6.9 and Theorem 13.13]{CPZ},  the ring of coinvariants $S/S^W_+$ of the formal group algebra is isomorphic to $A(\calB)$, which is a free $R$-module of finite rank. Hence in this case \[\HF/(S^W_+)\cong (S/S^W_+ )\rtimes R[W]\cong  A(\calB)\rtimes R[W].\]
\end{remark}

Recall from \S\ref{subsec:PBW} that $\HF^{(0)}$ is canonically isomorphic to $H_{deg}^{(0)}\cong R[W]$. Therefore, we have the following easy consequence.

\begin{corollary}\label{cor:deg_hack_rep}
The irreducible representations of $\HF$  on which $R$ acts by $k_\chi$ are in one-to-one correspondence with those of $k_\chi[W]$.
\end{corollary}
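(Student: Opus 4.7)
The plan is to combine Lemma~\ref{lem:repn_factor} with the PBW property established in Theorem~\ref{thm:filgrade} to reduce the classification to the well-known representation theory of the group algebra of $W$.

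First, I would identify the quotient ring $\HF^{(0)}=\HF^0/\HF^1$ explicitly. By Theorem~\ref{thm:filgrade}, there is an isomorphism of graded $R$-algebras $\Gr\HF\cong R\otimes_\ZZ H_{deg}$, where $H_{deg}$ is graded by $\deg(\theta_i)=0$, $\deg(\ep)=1$, $\deg(\la)=1$. The degree-zero component of $H_{deg}$ is therefore $\ZZ[W]$, spanned by the $\theta_w$. Under the isomorphism above, this identifies $\HF^{(0)}\cong R[W]$, with $T_i$ modulo $\HF^1$ corresponding to $\de_i$ (which is consistent with Example~\ref{ex:HFdeg0}).

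Next, given an irreducible $M\in\HF\hbox{-mod}_\chi$, Lemma~\ref{lem:repn_factor} asserts that the action of $\HF$ on $M$ factors through the quotient $\HF\twoheadrightarrow \HF^{(0)}\cong R[W]$. Since the $R$-action on $M$ factors through $R\to k_\chi$ by assumption, the whole action factors through
\[
\HF\twoheadrightarrow \HF^{(0)}\otimes_R k_\chi\cong k_\chi[W].
\]
Thus every irreducible of $\HF\hbox{-mod}_\chi$ descends uniquely to an irreducible $k_\chi[W]$-module.

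Conversely, pulling back any irreducible $k_\chi[W]$-module along the surjection $\HF\twoheadrightarrow k_\chi[W]$ yields an $\HF$-module in $\HF\hbox{-mod}_\chi$ which remains irreducible since the map is surjective; moreover the $R$-action is by $k_\chi$ by construction. These two constructions are mutually inverse at the level of isomorphism classes, giving the desired bijection. There is no real obstacle here beyond tracking the identifications cleanly; the substance was already packaged into Lemma~\ref{lem:repn_factor} and Theorem~\ref{thm:filgrade}, and the remaining step is the standard observation that pullback and pushdown along a surjection of algebras induce inverse bijections on irreducibles that factor through it.
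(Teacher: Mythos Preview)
Your proposal is correct and follows essentially the same approach as the paper, which treats the corollary as an ``easy consequence'' of Lemma~\ref{lem:repn_factor} together with the identification $\HF^{(0)}\cong H_{deg}^{(0)}\cong R[W]$ coming from Theorem~\ref{thm:filgrade}. You have simply spelled out the details that the paper leaves implicit, including the converse direction via pullback along the surjection $\HF\twoheadrightarrow k_\chi[W]$.
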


Now we assume the oriented cohomology theory $A$ satisfies  Assumption~\ref{assum:running}, so that we can consider convolution algebra and its representations.
For any $x\in \calN$, let $\calB_x$ be the fiber of the Springer resolution $\widetilde{\calN}\to\calN$ over $x$. Let $G_x$ be the centralizer of $x$ with respect to the $G$-action on $\calN$. Let $C_x$ be the component group of $G_x$, i.e., the quotient of $G_x$ by its connected component containing the identity.

\begin{prop}\label{prop:action_springer_fiber}
Under Assumptions~\ref{assump:strong} and  Assumption~\ref{assum:running}, we have 
\begin{enumerate}
\item There is a natural action of $\HF$ on $A(\calB_x)$, and this action  factors through the quotient algebra $\HF/(S^W_+)$;
\item The action of $\HF$  on $A(\calB_x)$ commutes with the action of $C_x$.
\end{enumerate}
\end{prop}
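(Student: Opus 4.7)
The plan is to invoke Theorem~\ref{thm:isom_HF_cob}, which identifies $\HF$ with the convolution algebra $A_{G\times\Gm}(Z)$. Since $\widetilde{\calN}\to\calN$ is projective, Proposition~\ref{prop:conv_action}(3)--(4), applied with $M=\widetilde{\calN}$, $M_0=\calN$, and $H=G\times\Gm$, yields an $R$-algebra homomorphism $A_{G\times\Gm}(Z)\to\End_R(A(\calB_x))$ for any $x\in\calN$. Composing with the inverse of $\Psi_A$ delivers the required action of $\HF$ on $A(\calB_x)$, settling the existence part of (1).

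To show that the action factors through $\HF/(S^W_+)$, I would analyze the restriction to the subring $S\subset\HF$. Under the geometric identification of Theorem~\ref{thm:isom_HF_cob}, $S$ corresponds to classes in $A_{G\times\Gm}(Z)$ supported on the diagonal $\widetilde{\calN}\hookrightarrow Z$. The projection-formula computation of Lemma~\ref{lem:scalar}, carried out over $\calB_x$, shows that $u\in S$ acts on $\mu\in A(\calB_x)$ by left multiplication by $i_x^*(u)$, where $i_x:\calB_x\hookrightarrow\widetilde{\calN}$ is the inclusion and the pull-back is taken after forgetting $G\times\Gm$-equivariance. The forgetful restriction $S\cong A_{G\times\Gm}(\widetilde{\calN})\to A(\widetilde{\calN})\cong A(\calB)$ sends $x_\gamma$ to $0$ and, under the standing hypotheses, identifies with the quotient map to the ring of coinvariants in the Borel-type presentation discussed in the remark preceding Corollary~\ref{cor:deg_hack_rep}; in particular, the augmentation ideal $S^W_+$ of the central subring $S^W$ is annihilated. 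Composing further with $A(\widetilde{\calN})\to A(\calB_x)$, we conclude that $S^W_+$ acts trivially on $A(\calB_x)$. Since $S^W$ is central in $\HF$ by the corollary immediately after Corollary~\ref{cor:struct2}, the two-sided ideal $(S^W_+)$ also acts trivially, yielding (1).

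For (2), the $G_x$-action on $\calB_x$ induces an action on $A(\calB_x)$ that factors through $C_x$, since the identity component $G_x^\circ$ is a connected algebraic group and therefore acts trivially on non-equivariant cohomology by a standard rigidity argument. On the other hand, every class $\eta\in A_{G\times\Gm}(Z)$ remains $G$-invariant (and in particular $G_x$-invariant) after forgetting equivariance, and the convolution in Proposition~\ref{prop:conv_action} is defined through maps that are $G_x$-equivariant, because $g\in G_x$ stabilizes $\widetilde{\calN}$, $Z$, and $\calB_x$. A direct bookkeeping using $G_x$-equivariance of the relevant pull-backs, refined intersections, and proper push-forwards shows that convolution with $\eta$ commutes with $g^*$ on $A(\calB_x)$ for every $g\in G_x$, so the $\HF$-action commutes with the induced $C_x$-action.

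The main potential obstacle is verifying that the diagonal subring $S\subset\HF$ acts on $A(\calB_x)$ precisely through the non-equivariant restriction $i_x^*$, and that this restriction annihilates $S^W_+$. The first point is an extension of Lemma~\ref{lem:scalar} to the (possibly singular) fibre $\calB_x$, which should go through because the diagonal $\widetilde{\calN}\hookrightarrow Z$ is a regular embedding and Proposition~\ref{prop:conv_action} is set up to accommodate singular targets; the second reduces to the coinvariant presentation of $A(\calB)$, which is available under the standing hypotheses.
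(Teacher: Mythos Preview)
Your argument for the existence of the action in (1) and for (2) is essentially the paper's: Proposition~\ref{prop:conv_action} plus Theorem~\ref{thm:isom_HF_cob} gives the action, and your sketch of (2) is precisely the content of \cite[Lemma~3.5.2]{CG}, which the paper simply cites.

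For the ``factors through $\HF/(S^W_+)$'' part of (1), however, you take a detour that is both more laborious and rests on a hypothesis you do not have. You trace the action of all of $S$ through the diagonal, then appeal to the Borel-type coinvariant presentation $A(\calB)\cong S/S^W_+$ from the remark before Corollary~\ref{cor:deg_hack_rep}. That remark explicitly requires the torsion index $\ttt$ to be \emph{invertible} in $R$, whereas Assumption~\ref{assump:strong} only asks that $\ttt$ be \emph{regular}; so as written your argument invokes a strictly stronger hypothesis than the proposition assumes. The paper avoids this entirely by a one-line observation: since $A(\calB_x)$ carries no equivariance, the action of $A_{G\times\Gm}(Z)$ factors through the restriction-of-groups map to $A(Z)$, and in particular the central subring $S^W\cong A_{G\times\Gm}(\pt)$ acts through $A_{G\times\Gm}(\pt)\to A(\pt)=R$, which is exactly the augmentation with kernel $S^W_+$. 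No coinvariant description of $A(\calB)$ and no analysis of $i_x^*$ on singular fibres is needed. Your self-identified ``potential obstacle'' thus disappears if you argue at the level of $S^W$ and the forgetful map on $\pt$, rather than at the level of $S$ and the restriction to $\calB_x$.
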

\begin{proof}
It is a direct consequence of Proposition~\ref{prop:conv_action} that $\HF$ acts on $A(\calB_x)$, and the second part of Statement (1) follows from the observation that the action of $\HF\cong A_{G\times\Gm}(Z)$ factors through $A(Z)$. In other words, the subring $S^W\cong A_{G\times\Gm}(\pt)$ acts by the quotient $S^W\to R$.

Statement (2) follows from the same proof as that of \cite[Lemma~3.5.2]{CG}.
\end{proof}

\begin{remark}
It is proved by De Concini, Lusztig, and Procesi that the Springer fiber $\calB_x$ admits a cell decomposition. Therefore, the rank of $A(\calB_x)$ as a module over $R$ is finite and is independent of the equivariant cohomology theory $A$.
\end{remark}

If $k_\chi$ has characteristic zero, i.e., $\bbQ\subseteq k_\chi$, then by the bivariant Riemann-Roch theorem, the  representations of $\HF$ are no much different then that of degenerate affine Hecke algebras. More precisely, by Corollary~\ref{cor:RR} and Proposition~\ref{prop:RR_geom_alg}, there is an isomorphism $RR: A(\calB_x)\cong\ch(\calB_x)\otimes_{\ZZ}R$, so that the following diagram commutes 
\[\xymatrix{
\HF\ar[r]\ar[d]&\bfH_{F_a}\ar[d]\\
\End_R(A(\calB_x))\ar[r]&\End_R(\ch(\calB_x)\otimes_{\ZZ}R);}\]

For each irreducible representation $\nu$ of $C_x$ over the field $k_\chi$, let $A_{x,\chi,\nu}$ be the isotypical component in \mbox{$A_{G\times\Gm}(\calB_x)\otimes_Rk_\chi$} that transforms under $C_x$ as the representation $\nu$.

\begin{corollary}\label{thm:Deligne-Langlands}
Assume further that $k_\chi$ has characteristic zero. Then 
\begin{enumerate}
\item for any pair $(x,\nu)$, the module $A_{x,\chi,\nu}$ has a unique irreducible quotient if it is non-zero, which will be denoted by $L_{x,\chi,\nu}$;
\item any irreducible $\HF$ -module finitely generated over $R$ such that the $R$-action factors through  $k_\chi$ is isomorphic to one of them;
\item the characters of $A_{x,\chi,\nu}$ are given by the  Deligne-Langlands-Lusztig character formula.
\end{enumerate}   
\end{corollary}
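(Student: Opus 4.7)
The plan is to deduce the corollary from the corresponding classical statement for the degenerate affine Hecke algebra, transported across the bivariant Riemann--Roch isomorphism. Since $\bbQ\subseteq k_\chi$, Theorem~\ref{prop:FAHAQIso} provides a ring isomorphism $\widetilde{\phi}_{F,F_a}:\HF\otimes_R k_\chi\overset{\sim}{\to}\bfH_{F_a}\otimes_R k_\chi$, while Corollary~\ref{cor:RR} and Proposition~\ref{prop:RR_geom_alg}, applied via the closed embedding $\calB_x\hookrightarrow\widetilde{\calN}$, yield an $R$-linear isomorphism $RR:A(\calB_x)\overset{\sim}{\to}\ch(\calB_x)\otimes_\ZZ R$ intertwining the $\HF$- and $\bfH_{F_a}$-actions (as indicated in the paragraph preceding the corollary). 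Because $RR$ is constructed from Chern and Todd classes of $G\times\Gm$-equivariant bundles, it also commutes with the action of $C_x$, and hence descends to an isomorphism of isotypical components $A_{x,\chi,\nu}\overset{\sim}{\to}\left(\ch(\calB_x)\otimes_\ZZ k_\chi\right)_\nu$.

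Once this reduction is in place, the three claims follow from the corresponding statements for standard modules of the degenerate affine Hecke algebra acting on the Chow homology of the Springer fiber, which are well-known after the work of Lusztig, Kazhdan--Lusztig, and Ginzburg; see for example \cite{CG} and \cite{Gin}. Explicitly, when the central parameters are specialized to the character determined by $k_\chi$, each isotypical component is a standard module possessing a unique irreducible quotient when non-zero, these quotients exhaust the irreducible finite-dimensional modules, and their composition multiplicities are given by the Deligne--Langlands--Lusztig character formula in terms of intersection cohomology sheaves on closures of nilpotent orbits. The exhaustion step is compatible with Corollary~\ref{cor:deg_hack_rep}, since irreducible representations of $k_\chi[W]$ are indexed via the Springer correspondence by pairs $(x,\nu)$ for which the geometric standard module is non-zero; transporting across $\widetilde{\phi}_{F,F_a}$ and $RR$ preserves these module-theoretic properties.

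The main technical obstacle is to establish rigorously that $RR$ induces an isomorphism of $\HF$-modules at the level of the possibly singular variety $\calB_x$. Proposition~\ref{prop:RR_geom_alg} is proved only for $\calB\times\calB$ and for the smooth components $Z_\alpha\subseteq Z$, so the extension to $\calB_x$ requires compatibility of the bivariant Riemann--Roch with proper pushforward under $\calB_x\hookrightarrow\widetilde{\calN}$, combined with the convolution action factoring through $A(Z)$ as in Proposition~\ref{prop:action_springer_fiber}. Invertibility is ensured by the affine paving of $\calB_x$ constructed by De Concini--Lusztig--Procesi: both $A(\calB_x)$ and $\ch(\calB_x)\otimes_\ZZ R$ are free $R$-modules of the same finite rank, and the Todd-class twist differs from the identity by a unipotent factor on each cell. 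Granted this compatibility, the $C_x$-equivariance and the preservation of composition series are formal consequences of the functoriality of Riemann--Roch, and the corollary follows.
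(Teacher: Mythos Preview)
Your proposal is correct and follows essentially the same approach as the paper, which in fact omits a formal proof and treats the corollary as an immediate consequence of the bivariant Riemann--Roch isomorphism $RR:A(\calB_x)\cong\ch(\calB_x)\otimes_\ZZ R$ displayed in the paragraph just before the statement. If anything, your treatment is more thorough: you explicitly flag the technical point that Proposition~\ref{prop:RR_geom_alg} is stated only for $\calB\times\calB$ and the smooth $Z_\alpha$, and you indicate how the extension to the singular fiber $\calB_x$ should go via the affine paving of De~Concini--Lusztig--Procesi, whereas the paper simply asserts the commutative diagram without further comment.
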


But if $k_\chi$ has positive characteristic, although the absolutely irreducible representations are the same as those of the $k_\chi[W]$ as Corollary~\ref{cor:deg_hack_rep} shows, the study of irreducible objects in $k_\chi[W]\hbox{-mod}$, and hence in $\HF\hbox{-mod}_\chi$ in positive characteristic is difficult to due to the failure of the decomposition theorem of perverse sheaves. Moreover, the Jordan-H\"older multiplicities of the irreducible representations in $A(\calB_x)$ for $x\in \calN$ are not known to us.

\newcommand{\arxiv}[1]
{\texttt{\href{http://arxiv.org/abs/#1}{arXiv:#1}}}
\newcommand{\doi}[1]
{\texttt{\href{http://dx.doi.org/#1}{doi:#1}}}
\renewcommand{\MR}[1]
{\href{http://www.ams.org/mathscinet-getitem?mr=#1}{MR#1}}

\end{document}